\theoremstyle{plain}
\newtheorem*{theorem*}{Theorem}
\newtheorem*{remark*}{Remark}
\newtheorem*{example*}{Example}
\newtheorem{lemma}{Lemma}[subsection]
\newtheorem{proposition}[lemma]{Proposition}
\newtheorem{corollary}[lemma]{Corollary}
\newtheorem{theorem}[lemma]{Theorem}
\newtheorem*{conjecture*}{Conjecture}
\newtheorem{introtheorem}{Theorem}
\theoremstyle{definition}
\newtheorem{definition}[lemma]{Definition}
\newtheorem{example}[lemma]{Example}
\theoremstyle{remark}
\newtheorem{remark}[lemma]{Remark}
\newtheorem{notation}[lemma]{Notation}
\newcommand{\Hom}{\operatorname{Hom}}
\newcommand{\Inj}{\operatorname{Inj}}
\newcommand{\sgn}{\operatorname{sgn}}
\newcommand{\eps}{\varepsilon}
\newcommand{\triv}{{\mathbbm 1}}
\newcommand{\Ind}{\operatorname{Ind}}
\newcommand{\id}{\operatorname{Id}}
\renewcommand{\Im}{\operatorname{Im}}
\newcommand{\Ext}{\operatorname{Ext}}
\newcommand{\Aut}{{\operatorname{Aut}}}
\newcommand{\End}{\operatorname{End}}
\newcommand{\C}{{\mathbb C}}
\newcommand{\Z}{{\mathbb Z}}
\newcommand{\Del}{{\Delta}}
\newcommand{\lam}{{\lambda}}
\newcommand{\fh}{{\mathfrak{h}}}
\newcommand{\gl}{{\mathfrak{gl}}}
 \newcommand{\Sym}{\operatorname{Sym}}
\newcommand{\abs}[1]{\left|{#1}\right|}
\newcommand{\uRep}{\underline{{\rm Rep}}}
\newcommand{\Rep}{\operatorname{Rep}}
\newcommand{\res}{\operatorname{res}}
\newcommand{\X}{\mathbf{X}}
\newcommand{\V}{{\mathcal V}}
\newcommand{\A}{{\mathcal A}}
\newcommand{\U}{\mathcal{U}}
\newcommand{\co}{{\it O}}
\newcommand{\InnaB}[1]{{{#1}}}
\newcommand{\InnaA}[1]{{{#1}}}
\DeclareMathOperator{\Mod}{Mod}
\newcommand{\tors}{\mathrm{tors}}
\newcommand{\bdel}{\blacktriangle}
\def\quotient#1#2{%
    \raise1ex\hbox{$#1$}\Big/\lower1ex\hbox{$#2$}%
}
\begin{document}
\title{Deligne categories and representations of the infinite symmetric group}
\date{\today}

 \author{Daniel Barter, Inna Entova-Aizenbud, Thorsten Heidersdorf}
\address{}
\address{D. B.: Department of Mathematics, University of Michigan, Ann Arbor}
\curraddr{Mathematical Sciences Institute, Australian National University, Canberra}
\email{danielbarter@gmail.com} 
\address{I. E.: Department of Mathematics, Ben Gurion University, Beer-Sheva, Israel}
\email{inna.entova@gmail.com}
\address{T. H.: Department of Mathematics, Ohio State University, Columbus}
\curraddr{Max Planck Institut f\"ur Mathematik, Bonn}
\email{heidersdorf.thorsten@gmail.com}

\thanks{2010 {\it Mathematics Subject Classification}: 05E05, 18D10, 20C30.}

\begin{abstract}
We establish a connection between two settings of representation stability for the symmetric groups $S_n$ over $\C$. One is the symmetric monoidal category $\Rep(S_{\infty})$ of algebraic representations of the infinite symmetric group $S_{\infty} = \bigcup_n S_n$, related to the theory of {\bf FI}-modules. The other is the family of rigid symmetric monoidal Deligne categories $\uRep(S_t)$, $t \in \C$, together with their abelian versions $\uRep^{ab}(S_t)$, constructed by Comes and Ostrik. 

We show that for any $t \in \C$ the natural functor $\Rep(S_{\infty}) \to \uRep^{ab}(S_t)$ is an exact symmetric faithful monoidal functor, and compute its action on the simple representations of $S_{\infty}$. Considering the highest weight structure on $\uRep^{ab}(S_t)$, we show that the image of any object of $\Rep(S_{\infty})$ has a filtration with standard objects in $\uRep^{ab}(S_t)$.

 As a by-product of the proof, we give answers to the questions posed by P. Deligne concerning the cohomology of some complexes in the Deligne category $\uRep(S_t)$, and their specializations at non-negative integers $n$. 
\end{abstract}

\keywords{}
\maketitle
\setcounter{tocdepth}{3}
\section{Introduction}

Consider the symmetric monoidal categories $\Rep(S_n)$ of finite-dimensional complex representations of symmetric groups $S_n$. It has been known for some time that there is an interesting phenomena of stabilization of representations of $S_n$ as $n \to \infty$: namely, one often encounters sequences $(V_n)_{n \geq 0}$, where each $V_n$ is a finite-dimensional representation of $S_n$, such that the action of $S_n$ on $V_n$ is determined by permutations with small cycles. 


Such ``stable sequences'' are encountered in many contexts: cohomologies of configuration spaces of $n$ distinct ordered points on a connected oriented manifold, spaces of polynomials on rank varieties of $n \times n$ matrices, the cohomology ring of the moduli space of $n$-pointed curves, and more. A framework for studying sequences with such properties has been developed with the theory of finitely generated ${\bf FI}$-modules by Church, Ellenberg, Farb \cite{CEF} and others. In particular they proved that $dim (V_n)$ is eventually polynomial if $(V_n)_{n \geq 0}$ comes from a finitely generated ${\bf FI}$-module.


Another manifestation of the stabilization phenomena is the Murnaghan stabilization theorem. Consider three arbitrary Young diagrams $\lam, \mu, \tau$. For $n>>0$, denote by ${\lam}[n]$ the Young diagram of size $n$ (and corresponding irreducible representation of $S_n$) obtained by adding a top row of size $n-\abs{\lam}$ to $\lam$ (similarly for $\mu, \tau$). It was proven by Murnaghan in \cite{Mu1} that the sequence $\{\dim\left( {\mu}[n] \otimes {\tau}[n] \otimes {\lam}[n] \right)^{S_n} \}_{n>>0}$ stabilizes. The stable value of the sequence is called the {\it stable (reduced) Kronecker coefficient $\bar{g}^{\lam}_{\mu, \tau}$} associated with the triple $(\lam, \mu, \tau)$.

We consider two types of ``limits'' (in a non-precise sense) of $\Rep (S_n)$ as $n \to \infty$. These limits will be $\C$-linear symmetric monoidal categories, and each type expresses a certain ``stabilization'' phenomena occuring in the family $\Rep(S_n)$, $n \geq 0$.

\begin{itemize}
 \item The algebraic representations of the infinite symmetric group $S_{\infty} = \bigcup_{n} S_n$, where each representation of $S_{\infty}$ corresponds to a ``polynomial sequence'' $(V_n)_{n \geq 0}$.
 \item The ``polynomial'' family of Deligne categories $\Rep(S_t)$, $t \in \C$, where the objects of the category $\Rep(S_t)$ can be thought of as ``continuations of polynomial sequences $(V_n)_{n \geq 0}$'' to complex values of $t=n$. 
\end{itemize}

There is a striking resemblance between the two settings: in both settings, one considers symmetric monoidal abelian categories, and both are a natural ``categorical context'' in which stable Kronecker coefficients appear. The goal of this paper is to study the relationship between these two settings and show that they are closely related.

Let us elaborate more on each of these settings.

\subsection{Representations of the infinite symmetric group}
Consider the infinite symmetric group $S_{\infty} = \cup_n  
S_n$ and let $\Rep(S_{\infty})$ be the category of algebraic representations of 
$S_{\infty}$ in the sense of Sam and Snowden (see \cite{SS}). This is a symmetric monoidal (SM) abelian category generated by the object $\fh$, where $\fh$ is the permutation representation of $S_{\infty}$ on $\C^{\infty}$. 

The category $\Rep(S_{\infty})$ comes endowed with left-exact SM functors $$\Gamma_n : \Rep(S_{\infty}) \longrightarrow \Rep(S_n)$$ called the specialization functors.

The isomorphism classes of simple objects in this category are parametrized by all Young diagrams (of arbitrary size). The functors $\Gamma_n$ behave nicely on simple object:
$$\Gamma_n \left(L_\lam \right) = \begin{cases}
                                   \lam[n] &\text{  if  } n \geq \abs{\lam} + \lam_1 \\
				    0 &\text{  else  } 
                                  \end{cases}$$
 (hence $L_{\lambda}$ should be considered as the representation of the symmetric group $S_{\infty}$ corresponding to a Young diagram of infinite size, obtained from $\lambda$ by adding an infinite row on top).

Considering the Grothendieck ring of $\Rep(S_{\infty})$, one sees that the structure constants in this ring are the stable Kronecker coefficients; that is, given three simple objects $L_{\lam}, L_{\mu}, L_{\tau}$, the multiplicity of $L_{\lam}$ in the Jordan-Holder components of $L_{\mu} \otimes L_{\tau}$ is precisely $\bar{g}^{\lam}_{\mu, \tau}$.

The category $\Rep(S_{\infty})$ possesses a nice universal property, as 
described in \cite[Theorem 6.4.3]{SS}: there is an isomorphism between the category of left-exact SM functors of 
$\Rep(S_{\infty})$ to a SM abelian category $\mathcal{C}$ and the category of 
Frobenius objects (see Definition \ref{def:frobenius_object}) in $\mathcal{C}$. In case $\mathcal{C} = \Rep(S_n)$ and the Frobenius object in it is taken to be the permutation representation $\C^n$, the corresponding functor would be canonically isomorphic to $\Gamma_n$.

The category $\Rep(S_{\infty})$ can be constructed in another way. In \cite{SS}, Sam and Snowden check that the  colimit functor ${\bf FI} \mbox{-} \Mod \to \Rep(S_{\infty})$ is exact and essentially surjective. Moreover, an object is sent to zero if and only if it is finite length in ${\bf FI} \mbox{-} \Mod$. This establishes an equivalence of abelian categories between ${\bf FI} \mbox{-} \Mod / {\bf FI}^{\rm finite} \mbox{-} \Mod$ and $\Rep(S_{\infty})$. This equivalence does not preserve the tensor structure.

\subsection{Deligne categories}
We consider the family of categories $\uRep(S_t)$ (parametrized by $t \in \C$) as constructed by Deligne (\cite{Del07}, see also \cite{CO}). These categories are rigid SM Karoubian additive $\C$-linear categories, with $\End(\triv) = \C$. Each category $\uRep(S_t)$ is generated by a single object $\fh_t$ of dimension\footnote{The notions of dimension of an object is defined in any rigid SM category (see \cite{EGNO}), and is by definition an element of $\End(\triv)$. In our case it is just a complex number.} $t$, which plays the role of the ``permutation representation'' for the non-existent group $S_t$. 

The categories $\uRep(S_t)$ ``interpolate'' the categories $\Rep(S_n)$ when $t = n \in \Z_{\geq 0}$ in the following sense: there exists an additive SM functor 
$$\mathcal{S}_n: \uRep(S_n) \longrightarrow \Rep(S_n) $$ which is full and essentially surjective, making $\Rep(S_n) $ a quotient of $\uRep(S_n)$ by a tensor ideal.


The indecomposable objects in $\uRep(S_t)$ are parametrized (up to isomorphism) by the set of all Young diagrams, and the structure constants are described in terms of stable Kronecker coefficients $\bar{g}^{\lambda}_{\mu, \tau}$ (see \cite{En2}). The indecomposable objects interact nicely with the specialization functors: for any $n \in \Z_{\geq 0}$ and any indecomposable object $\X_{\lam} \in \uRep(S_n)$, we have 

$$\mathcal{S}_n \left(\X_{\lam}\right) = \begin{cases}
                                   \lam[n] &\text{  if  } n \geq \abs{\lam} + \lam_1 \\
				    0 &\text{  else  } 
                                  \end{cases}$$

The Deligne category $\uRep(S_t)$ is the universal SM category with a dualizable Frobenius object of dimension $t$ (see \cite{Del07, CO1} for a precise statement). The category  $\uRep(S_t)$ is abelian if and only if $t \notin \Z_{\geq 0}$. Therefore we will also consider the categories $\uRep^{ab}(S_t)$ ($t \in \C$), the abelian envelopes of $\uRep(S_t)$ as constructed by Comes and Ostrik (\cite{CO1}).

These are abelian $\C$-linear rigid SM categories with exact tensor 
product functors, and $\End(\triv) = \C$ (such categories are called tensor categories in \cite{EGNO}). The category $\uRep(S_t)$ is a full SM subcategory in $\uRep^{ab}(S_t)$. In case $t \notin \Z_{\geq 0}$, $\uRep(S_t)$ is isomorphic to its abelian envelope.

For $t=n \in \Z_{\geq 0}$, however, there is a caveat: there is no SM functor from $\uRep^{ab}(S_{t=n})$ to $\Rep(S_n)$. Yet $\uRep^{ab}(S_t)$ has many upsides: not only it is abelian, it is in fact a highest-weight category (with an 
infinite weight poset given by all Young diagrams, see \cite{En}). The subcategory $\uRep(S_t)$ is then the subcategory of tilting objects.

\subsection{Main result} It is a natural question that has been raised in the literature \cite{CEF} \cite{SS} how these two limit constructions are related. The universal  property of $\Rep(S_{\infty})$ yields a left-exact $\C$-linear symmetric monoidal functor ${\mathbf \Gamma}_t: 
\Rep(S_{\infty}) \to \uRep^{ab}(S_t)$. We prove the following result (for a more precise statement see \ref{thrm:main}):

\begin{introtheorem}\label{introthrm:main}
\mbox{}
\begin{itemize}
 \item The functor ${\mathbf \Gamma}_t$ is faithful and exact.
 \item The functor ${\mathbf \Gamma}_t$ takes simple objects in $\Rep(S_{\infty})$ to 
standard objects in the highest-weight category $\uRep^{ab}(S_t)$, and 
injective objects to tilting objects (these are precisely the objects coming 
from the Karoubian Deligne category $\uRep(S_t)$). 
\end{itemize}

\end{introtheorem}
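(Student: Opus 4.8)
The plan is to build the functor ${\mathbf \Gamma}_t$ explicitly via the universal property of $\Rep(S_{\infty})$, and then verify each assertion by reducing to an explicit computation on the generating object $\fh$ and its tensor powers. The universal property from \cite[Theorem 6.4.3]{SS} produces a left-exact SM functor from $\Rep(S_{\infty})$ to any SM abelian category equipped with a Frobenius object; in $\uRep^{ab}(S_t)$ the object $\fh_t$ is naturally a (dualizable) Frobenius object, so this yields ${\mathbf \Gamma}_t$ with ${\mathbf \Gamma}_t(\fh) = \fh_t$. First I would set up the relationship between $\fh$ and $\fh_t$ carefully: in $\Rep(S_{\infty})$, the object $\fh$ sits in short exact sequences linking tensor powers $\fh^{\otimes k}$ to the permutation modules $\C[\operatorname{Inj}([k],\infty)]$-type objects, and the analogous ``interpolated'' combinatorics holds in $\uRep(S_t)$ by Deligne's construction. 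The key structural input is that $\uRep(S_t)$ already contains all the morphisms predicted by the partition-algebra combinatorics, so ${\mathbf \Gamma}_t$ restricted to the subcategory generated by tensor powers of $\fh$ is essentially the tautological functor sending $\fh^{\otimes k}$ to $\fh_t^{\otimes k}$.

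Next I would prove exactness. Since ${\mathbf \Gamma}_t$ is already left-exact, it suffices to show it is right-exact, equivalently that it preserves epimorphisms, equivalently (using that $\Rep(S_{\infty})$ is generated under colimits and subquotients by the $\fh^{\otimes k}$, which are projective-like/injective generators in the relevant sense) that it behaves well on a class of ``standard'' resolutions. Concretely, every object of $\Rep(S_{\infty})$ has a finite resolution by (sums of) tensor powers $\fh^{\otimes k}$; I would apply ${\mathbf \Gamma}_t$ to such a resolution and use the fact that ${\mathbf \Gamma}_t(\fh^{\otimes k}) = \fh_t^{\otimes k}$ together with an explicit computation that the differentials of these Koszul-type complexes remain exact after applying ${\mathbf \Gamma}_t$. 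This is exactly the point where the ``cohomology of complexes in $\uRep(S_t)$'' alluded to in the abstract enters: one must show certain interpolated complexes built from $\fh_t$ are acyclic in $\uRep^{ab}(S_t)$, including at $t = n \in \Z_{\geq 0}$ where $\uRep(S_t)$ is not abelian and the naive specialization argument fails. I expect this acyclicity computation — the ``by-product'' answering Deligne's questions — to be the main obstacle, and it will likely require a careful analysis using the highest-weight structure on $\uRep^{ab}(S_t)$ (e.g., checking the complexes are built from tilting objects and computing Euler characteristics against standard/costandard objects, or using a ``lifting'' argument from generic $t$).

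Once exactness is established, faithfulness follows from exactness plus the fact that ${\mathbf \Gamma}_t$ is nonzero on every simple object: an exact functor between abelian categories is faithful iff it kills no nonzero object, and by the classification of simples $L_\lam$ it is enough to show ${\mathbf \Gamma}_t(L_\lam) \neq 0$ for all Young diagrams $\lam$. For the second bullet, I would compute ${\mathbf \Gamma}_t(L_\lam)$ directly. Using a resolution of $L_\lam$ by tensor powers of $\fh$ (or by permutation-type objects) and applying the exact functor ${\mathbf \Gamma}_t$, I would identify the image with the standard object $\Delta_\lam$ in $\uRep^{ab}(S_t)$ by matching both the combinatorial ``highest weight $\lam$'' data and the requisite homological characterization of standard objects (they are the images of projectives under the quotient to the highest-weight truncation, or characterized by $\operatorname{Hom}/\operatorname{Ext}$ vanishing against costandards of larger weight). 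For the injective objects: the injectives in $\Rep(S_{\infty})$ are known (by Sam--Snowden) to be exactly direct sums of the $\fh^{\otimes k}$ up to summands, i.e., the ``polynomial functor'' injectives; since ${\mathbf \Gamma}_t(\fh^{\otimes k}) = \fh_t^{\otimes k}$ lies in $\uRep(S_t)$, and objects of $\uRep(S_t) \subset \uRep^{ab}(S_t)$ are precisely the tilting objects by the stated highest-weight structure, the image of any injective is tilting. The pieces I would assemble last are the compatibility checks — that the standard filtration claim (from the full Theorem \ref{thrm:main}) follows formally, since any object of $\Rep(S_{\infty})$ has a finite filtration with simple subquotients and ${\mathbf \Gamma}_t$ exact sends this to a filtration with standard subquotients.
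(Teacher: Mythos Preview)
Your overall architecture matches the paper's: construct $\mathbf{\Gamma}_t$ from the universal property, show that it sends the explicit injective resolution of each simple $L_\lambda$ (built from the objects $\Delta_k^\infty \subset \fh^{\otimes k}$) to a complex in $\uRep^{ab}(S_t)$ with cohomology concentrated in top degree, deduce exactness, then faithfulness and the identification of $\mathbf{\Gamma}_t(L_\lambda)$ from the top cohomology. The statement about injectives going to tiltings is handled exactly as you say.

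The genuine gap is in the acyclicity step, which you correctly flag as the main obstacle but for which your proposed methods would not work. Euler characteristics against standards/costandards give you only the alternating sum of composition multiplicities, not vanishing degree by degree; the paper does use an Euler characteristic argument, but only \emph{after} vanishing in all but one degree is already established, to identify the remaining cohomology with $\mathbf{M}_\lambda$. And ``lifting from generic $t$'' is exactly the kind of specialization argument that breaks at $t\in\Z_{\geq 0}$, since the abelian structure of $\uRep^{ab}(S_t)$ is discontinuous there. The paper's actual tool is entirely different: it uses the contravariant Schur--Weyl functor $\widehat{SW}_{t,W}$ from $\uRep^{ab}(S_t)$ to a parabolic category $\mathcal{O}$ for $\gl(W)$, which (after localizing by finite-dimensional modules and passing to $W=\C^\infty$) is an exact anti-equivalence. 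Under this functor the Delta complex $K_{n,t}^\bullet$ is transported to an explicit complex of free $\Sym(U)$-modules isomorphic to $(A \xrightarrow{\id\otimes\epsilon} A\otimes W)^{\otimes_A n}$, whose acyclicity away from top degree is immediate since $\epsilon$ is split injective. Vanishing in $\uRep^{ab}(S_t)$ then follows by faithfulness and exactness of the Schur--Weyl functor. An analogous (covariant) Schur--Weyl argument is used to establish the corresponding acyclicity of $K_{n,\infty}^\bullet$ in $\Rep(S_\infty)$ in the first place. So the missing ingredient is this Schur--Weyl machinery, without which neither acyclicity statement is accessible.
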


\begin{remark}
 In particular, the essential image of the functor $\mathbf{\Gamma}_t$ consists of standardly-filtered objects. Note that there are standardly filtered objects which are not in the essential image of the functor, see remark \ref{rem:standard}
 \end{remark}

\begin{remark}
 We stress that this functor is not full: for instance
there are no non-zero maps from the trivial representation $\triv$ to the 
permutation representation $\fh_{\infty} = \C^{\infty}$ in $\Rep(S_{\infty})$, but there is a 
non-zero morphism $$\mathbf{\Gamma}_t(\triv) = \triv \to \fh_t = \mathbf{\Gamma}_t(\fh_{\infty})$$ in 
$\uRep^{ab}(S_t)$, an analogue of the morphism $$\C \longrightarrow \C^n, 
\;\; 1 \to \sum_i e_i$$ of $S_n$ representations.
\end{remark}

\begin{remark} \label{rem:not-injective} The functor $\mathbf{\Gamma}_t$ is not injective on objects. An example is given by \[ \mathbf{\Gamma}_t(\fh_{\infty}) = \mathbf{\Gamma}_t( \mathbf{1} \oplus L_{\yng(1)})\] for all $t \geq 2$. 
\end{remark}

\begin{remark}
 A similar result for the general linear group has been proven in \cite{EHS}.
\end{remark}

\subsection{Delta complexes and a conjecture of Deligne}

The crucial point of the proof of Theorem \ref{introthrm:main} is to show the exactness of $\mathbf{\Gamma}_t$. It is very difficult to evaluate the effect $\mathbf{\Gamma}_t$ on objects and morphisms from its definition. In particular we cannot calculate $\mathbf{\Gamma}_t (I^{\bullet}_{\lambda})$ for an injective resolution $I^{\bullet}_{\lambda}$ of $L_{\lambda}$ directly. To circumvent this problem we define special objects $\Delta_k^{\infty}$, $\Delta_k^{t}$, $\Delta_k^{n}$ ($k \in \Z_{\geq 0}$) in each of the categories $\Rep(S_{\infty})$, $\uRep(S_t)$, $\Rep(S_N)$ (for any $N \in \Z_{\geq 0}, t \in \C$). These objects come equipped with morphisms $\res^i: \Delta_k  \to \Delta_{k-1}$ where $\Delta_k$ is any one of $\Delta_k^{\infty}$, $\Delta_k^{t}$, $\Delta_k^{n}$.

The benefit of these objects and morphisms is that we can evaluate the effect of $\mathbf{\Gamma}_t$ on them. By Lemma \ref{lem:Delta_objects_corresp} $\mathbf{\Gamma}_t$ takes $\Delta_k^{\infty}$ to $\Delta_k^t$; and by Lemma \ref{lem:res_morph_corresp} $\mathbf{\Gamma}_t$ takes $\res^i: \Delta_k^{\infty} \to \Delta_{k-1}^{\infty}$ to $\res^i: \Delta_k^t \to \Delta_{k-1}^t$ for any $i \in \{1, \ldots k\}$.

Using the objects $\Delta_k^{\infty}$, $\Delta_k^{t}$, $\Delta_k^{n}$ and the morphisms $\res^i$ as building blocks, we define certain complexes in each of the categories $\Rep(S_{\infty})$, $\Rep(S_N)$, $\uRep(S_t)$ (for any $N \in \Z_{\geq 0}, t \in \C$), which we call {\it Delta complexes}. These come with a natural $S_n$-action, and are denoted by $K_{n, \infty}^{\bullet}$, $K_{n, N}^{\bullet}$, $K_{n, t}^{\bullet}$ respectively.  

In Sections \ref{sec:Delta_complex_infty}, \ref{sec:Delta_complex_Deligne}, we prove the following result:

\begin{introtheorem}\label{introthrm:complex}
\begin{enumerate}
\item The complexes $K_{n, \infty}^{\bullet}$, $K_{n, t}^{\bullet}$ (for any $t \in \C$) have cohomology only in the top degree. 
\item For any $\lambda \vdash n$, $\Hom_{S_n}(\lambda ,  K_{n, \infty}^{\bullet})$ is the injective resolution of $L_{\lambda}$ in $\Rep(S_{\infty})$.
\item The complexes $K_{n, \infty}^{\bullet}$, $K_{n, N}^{\bullet}$, $K_{n, t}^{\bullet}$ correspond to each other under the functors $ \Gamma_N, \mathcal{S}_N,{\mathbf \Gamma}_t$.

\end{enumerate}
\end{introtheorem}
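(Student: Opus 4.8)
The plan is to first set up the three families of Delta complexes $K_{n,\infty}^{\bullet}$, $K_{n,N}^{\bullet}$, $K_{n,t}^{\bullet}$ concretely enough that one can work with them by hand. The building blocks $\Delta_k$ are, morally, the ``$k$-fold versions'' of the permutation object $\fh$ (in $\Rep(S_\infty)$ this is the representation on tuples of $k$ distinct points, in $\Rep(S_N)$ it is $\C[\mathrm{Inj}(\{1,\dots,k\},\{1,\dots,N\})]$, and in $\uRep(S_t)$ it is the interpolating object given by the relevant idempotent in the partition algebra). The maps $\res^i\colon \Delta_k \to \Delta_{k-1}$ are the $k$ partial ``forget the $i$-th point'' maps, and the differential of the Delta complex is the alternating sum $d = \sum_{i} (-1)^i \res^i$, so that $K_{n,\bullet}^{\bullet}$ is a Koszul-type complex. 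I would begin by verifying the simplicial (or semi-simplicial) identities among the $\res^i$ in each category, so that $d^2 = 0$; this is routine in $\Rep(S_\infty)$ and $\Rep(S_N)$, and in $\uRep(S_t)$ it follows because the identities are polynomial in $t$ and hold for $t = N \gg 0$, where $\mathcal S_N$ and the classical computation apply (an interpolation/Zariski-density argument in $t$).

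Next I would prove part (3), the compatibility with $\Gamma_N$, $\mathcal S_N$, and $\mathbf\Gamma_t$. For $\mathbf\Gamma_t$ this is exactly Lemmas \ref{lem:Delta_objects_corresp} and \ref{lem:res_morph_corresp}, which say $\mathbf\Gamma_t(\Delta_k^\infty) = \Delta_k^t$ and $\mathbf\Gamma_t(\res^i) = \res^i$; since $\mathbf\Gamma_t$ is additive and $\C$-linear it then takes the differential $d = \sum(-1)^i\res^i$ to the corresponding differential, hence $\mathbf\Gamma_t(K_{n,\infty}^{\bullet}) = K_{n,t}^{\bullet}$ as complexes (and the $S_n$-action is respected since $\mathbf\Gamma_t$ is monoidal and the $S_n$-action is built from permuting tensor factors). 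The analogous statements for $\Gamma_N\colon \Rep(S_\infty)\to\Rep(S_N)$ and $\mathcal S_N\colon \uRep(S_N)\to\Rep(S_N)$ reduce to the explicit descriptions of $\Gamma_N$ and $\mathcal S_N$ on the generating permutation object and its injective/tensor powers, which are standard. I would do part (3) before part (1) because it lets me transport exactness statements between the categories.

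For part (1), the claim that $K_{n,\infty}^{\bullet}$ and $K_{n,t}^{\bullet}$ have cohomology concentrated in top degree, I would first establish it for $K_{n,N}^{\bullet}$ in $\Rep(S_N)$ for all sufficiently large $N$: there the complex is a concrete Koszul-type complex of $S_N$-permutation modules whose acyclicity (except in top degree) can be checked by an explicit contracting homotopy or by identifying it with a known resolution — e.g.\ realizing $K_{n,N}^{\bullet}$ as (a summand of) the simplicial chain complex of the complex of injective maps, which is highly connected. Then, because $\uRep(S_t)$ is generated by $\fh_t$ and Hom-spaces and composition are given by polynomials in $t$, the ranks of the differentials in $K_{n,t}^{\bullet}$ are ``generically constant'' in $t$: the rank can only drop on a proper closed (finite) subset, so the Euler-characteristic/rank bookkeeping forcing cohomology into the top degree for $t = N \gg 0$ propagates to all but finitely many $t$, and the remaining finitely many integer values $t = n'$ are handled separately using $\mathcal S_{n'}$ together with the kernel being a tensor ideal. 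Finally $K_{n,\infty}^{\bullet}$ is recovered as the ``limit'' $N \to \infty$ of $K_{n,N}^{\bullet}$ (compatibly with $\Gamma_N$), so its higher cohomology vanishes as well; and for part (2), applying $\Hom_{S_n}(\lambda, -)$ to $K_{n,\infty}^{\bullet}$ — which is exact in positive degrees by part (1) and yields injective objects of $\Rep(S_\infty)$ in each degree because each $\Delta_k^\infty$ (being a $\Gamma$-image of a permutation/Frobenius object, or directly a finitely generated projective-type object) is injective — produces a complex that is an injective resolution, and a top-cohomology computation identifies $H^{\mathrm{top}}$ with $L_\lambda$ via the known value $\Gamma_n(L_\lambda) = \lambda[n]$ and Frobenius reciprocity.

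The main obstacle I anticipate is part (1) for the Deligne category at the ``bad'' values $t = n' \in \Z_{\geq 0}$: generic-rank/interpolation arguments handle $t \notin \Z_{\geq 0}$ cleanly, but at non-negative integers the category $\uRep(S_{n'})$ is not semisimple and $\uRep^{ab}(S_{n'})$ appears, so one cannot simply read off exactness from $\Rep(S_{n'})$ via $\mathcal S_{n'}$ (which kills a tensor ideal). Controlling what $\mathcal S_{n'}$ forgets — and showing it does not destroy the acyclicity — will require a careful analysis of the negligible morphisms and likely a direct construction of a contracting homotopy inside $\uRep(S_{n'})$ (equivalently, a homotopy whose coefficients are regular at $t = n'$), which is the technical heart of Sections \ref{sec:Delta_complex_infty}--\ref{sec:Delta_complex_Deligne}.
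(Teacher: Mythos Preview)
Your approach to part~(3) is correct and matches the paper exactly. Your approach to part~(2) is also essentially right: each $\Delta_k^\infty$ is injective in $\Rep(S_\infty)$, and the top cohomology is identified via an Euler-characteristic count (the paper does this in Proposition~\ref{prop:Delta_cx_infty_hmgy}).

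The genuine gap is in your strategy for part~(1). Your polynomial-interpolation argument gives acyclicity of $K_{n,t}^\bullet$ only for generic~$t$, and you correctly flag the integer values $t=n'\in\Z_{\geq 0}$ as the obstacle. But the fallback you propose --- using $\mathcal{S}_{n'}$ to pass to $\Rep(S_{n'})$ --- cannot work: for $N<n$ the classical complex $K_{n,N}^\bullet$ is \emph{not} acyclic in lower degrees (Corollary~\ref{cor:Delta_cx_class_hmgy} computes nonzero $R^{>0}\Gamma_N(L_\mu)$), so even if $\mathcal{S}_{n'}$ were faithful on the relevant objects you would get the wrong answer, and since $\mathcal{S}_{n'}$ kills a tensor ideal you cannot lift acyclicity back anyway. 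Constructing a contracting homotopy in $\uRep(S_t)$ with coefficients regular at all non-negative integers is exactly what you would need, and you have not indicated how to do it.

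The paper bypasses interpolation entirely. For $K_{n,\infty}^\bullet$ it applies an exact faithful Schur--Weyl functor $SW_V\colon \Rep(S_\infty)^{op}\to \Rep(GL(U))$ under which the complex becomes $(\C\xrightarrow{\eps}V)^{\otimes n}$, visibly acyclic except in top degree. For $K_{n,t}^\bullet$ it uses the contravariant Schur--Weyl functor $\widehat{SW}_{t,\C^\infty}$ of \cite{En,En3}, which is an anti-equivalence $\uRep^{ab}(S_t)\to\widehat{\mathcal{O}}^{\mathfrak p_\infty}_{t,\C^\infty}$; the image of $K_{n,t}^\bullet$ is identified with a tensor power over $A=\Sym(U)$ of a two-term complex with split-injective differential, again acyclic except in top degree. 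This argument is uniform in $t$ and requires no case analysis at integers. The moral: rather than interpolating ranks, find an exact faithful functor to a category where the complex becomes the $n$-th tensor power of something obviously exact.
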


The last two statements imply the exactness of the functor ${\mathbf \Gamma}_t$. Using the first two statements of Theorem \ref{introthrm:complex}, we also compute the cohomology of the complex $K_{n, N}^{\bullet}$ for any $N$, answering the questions posed by P. Deligne in \cite[(7.13)]{Del07}.

\subsection{Corollaries}

Theorem \ref{introthrm:main} unifies the two ``stabilization settings'' described above, and explains the similarities between $\Rep(S_{\infty})$ into $\uRep^{ab}(S_t)$, such as the occurrence of stable Kronecker coefficients in both settings.

Let us give a few examples of additional corollaries and applications of Theorem \ref{introthrm:main}:

\begin{itemize}
\item One of the main tools used so far in working with the Deligne categories was the {\it lifting map}: a ring homomorphism between the Grothendieck rings of $\uRep(S_t)$ and $\uRep(S_T)$ for any $t \in \C$ and $T$ a formal variable, which respected the $\Hom$ bilinear forms on both rings. 
Until now, this has been a crucial tool in passing between different values of $t$, giving a formal meaning to the ``polynomiality'' of the family of categories $\uRep(S_t)$. Theorem \ref{introthrm:main} gives a new, more categorical tool to do the same: passing between different values of $t$ by passing through the image of $\Rep(S_{\infty})$ in $\uRep^{ab}(S_t)$. 
\item Given an abelian SM category with a rigid Frobenius object $X$, the corresponding functor $\Gamma_X$ from $\Rep(S_{\infty})$ will factor through either $\Rep(S_n)$ for some $n$, or through $\uRep^{ab}(S_t)$ for some $t$ (to determine in which of the cases we are, see \cite{CO1} for an explicit criterion on $X$). This allows us to determine whether $\Gamma_X$ is exact (and calculate its derived functors).
\item Each ${\bf FI}$-module, up to torsion, corresponds to an object of the category $\uRep^{ab}(S_t)$ for any $t$. 
\item We use Theorem \ref{introthrm:main} to provide an easy computation of the structure constants in the Grothendieck ring of $\uRep^{ab}(S_t)$ (see Section \ref{sec:Grothendieck}).
\end{itemize}

It would be interesting to study the relation between the corresponding settings in positive characteristic.

\subsection{Acknowledgements}
This project was started during the Representation Stability Workshop at the American Institute of Mathematics (AIM) in San-Jose in 2016. The authors thank the organizers and the Institute for the hospitality and the opportunity to collaborate on this project. 

The second author would like to thank Vladimir Hinich for his valuable remarks and suggestions.

The authors would like to thank the referee for valuable suggestions.

\subsection{Structure of the article}
Section \ref{sec:notation} gives some general preliminaries on representations of symmetric groups and tensor powers of complexes. In Sections \ref{sec:infinity} and \ref{sec:Deligne}, we describe the main players: the categories $\Rep(S_{\infty})$ and $\uRep^{ab}(S_t)$, and prove some auxilary results. In Section \ref{sec:Del_correspondence}, we prove that the functor $\Gamma_t$ takes objects $\Delta_k^{\infty}$ to $\Delta_k^t$. 

In Sections \ref{sec:Delta_complex_infty}, \ref{sec:Delta_complex_classical}, \ref{sec:Delta_complex_Deligne}, we study the Delta complexes and prove Theorem \ref{introthrm:complex}. In Section \ref{sec:proof_main}, we prove Theorem \ref{introthrm:main}.
Finally, in Section \ref{sec:Grothendieck} we use Theorem \ref{introthrm:main} to give an easy computation of the structure constants in the Grothendieck ring of $\uRep^{ab}(S_t)$.


\section{Notation and preliminaries}\label{sec:notation}
The base field throughout the paper will be $\C$.

\subsection{Representations of general linear groups}
\InnaA{Given a finite-dimensional vector space $U$, we denote by $\Rep(GL(U))_{pol} $ the full subcategory of finite-dimensional polynomial representations of the algebraic group $GL(U)$: these are representations $GL(U) \to GL(U')$ which can be extended to a morphism of affine varieties $End(U) \to End(U')$. Equivalently $\Rep(GL(U))_{pol}$ is a semisimple category whose simple objects are $S^{\lam} U$ (application of the Schur functor $S^{\lam}$ to $U$). This is a full subcategory of $\Mod_{\U(\gl(U))}$, where $\gl(U)$ is the Lie algebra of $GL(U)$.}

\subsection{Representations of the symmetric group}\label{ssec:prelim_rep_sym}
A partition $\lambda = (\lambda_1,\lambda_2, \ldots)$ is a non-increasing sequence of integers such that $\lambda_i = 0$ for some $i$. We often identify a partition with its associated Young diagram. The size $|\lambda|$ of a partition $\lambda$ is the sum $\sum \lambda_i$. We write $\lambda\vdash n$ if $\lambda$ has size $n$. Given two partitions $\lambda, \mu$ such that $\lambda_i \leq \mu_i$ for all $i$ we write $\lambda \subseteq \mu$. If $\lambda \subseteq \mu$, $\mu / \lambda$ denotes the diagram that is obtained from $\mu$ if we remove all the boxes in its Young diagram that are part of $\mu$. We say $\mu / \lambda$ is a horizontal strip of size $|\mu| - |\lambda|$ if $\mu_i \geq \lambda_i \geq \mu_{i+1}$ and write $\mu / \lambda \in HS_d$ where $d = |\mu| - |\lambda|$. We write $\mu / \lambda \in HS$ if there is some $d$ for which $\mu / \lambda \in HS_d$. We denote by $\mu^{\vee}$ the transpose partition. We say $\mu / \lambda \in VS_d$ if and only if $\mu^{\vee} /  \lambda^{\vee} \in HS_d$.

We denote by $S_n$ the symmetric group. If $\lambda$ is a partition of $n$, we let $\lambda$ be the corresponding irreducible representation of $S_n$. They are indexed in such a way that $(n)$ is the trivial representation of $S_n$, $(n-1,1)$ is the standard representation and $(1^n)$ (the $n$-th exterior power of the standard representation) is the sign representation $\sgn_n$ of $S_n$.

We will denote by $\Rep(S_n)$ the category of finite-dimensional representations of $S_n$.

Let $N \in \mathbb{Z}_{\geq 0} \cup \{\infty\}$. We use the following notation:
$$[N] := \{ j \in \Z : 1 \leq j \leq N\}$$
For $n \in \mathbb{Z}_{\geq 1}$, and $1\leq i\leq n$, we denote:
 $$\res^i = \iota_i^*:  Fun\left([n],[N]\right) \longrightarrow Fun\left([n-1], [N]\right) $$ 
where $\res^i (f) := f \circ \iota_i$, and \begin{align}\label{eq:iota} \iota_i: [n-1] \to  [n]  , \;\;\; \iota_i(j) = \begin{cases} j &\text{  if   } j<i \\ j+1 
&\text{  if   } j \geq i.\end{cases}\end{align}

We will also denote by $\res^i$ the restriction of the above map to the set $\Inj\left([n],[N]\right)$ of injective maps, and the induced linear map
$$\res^i: \C \Inj\left([n],[N]\right) \to \C \Inj\left([n-1],[N]\right).$$


\subsection{Induced representations of symmetric groups}\label{ssec:prelim_induced}

Throughout the next few sections, we will heavily use the following induced representations of symmetric groups:
$$ \Delta^n_k:= \Ind_{S_{n-k}\times S_k}^{S_n \times S_k} \C, \;\; \bdel^n_k:= \Ind_{S_{n-k}\times S_k}^{S_n \times S_k} \C \otimes \sgn_k$$
Here the induction goes by the inclusions of subgroups $S_{n-k} \times S_k \subset S_{n-k} \times S_k  \times S_k \subset S_n \times S_k$ (the first inclusion being the diagonal $S_k \subset S_k \times S_k$), and $\sgn_k$ is the sign representation of $S_k$.

We will also use the $S_n$-equivariant maps $$\res^i: \Delta^n_k \to \Delta^n_{k-1}, \;\; \res^i: \bdel^n_k \to \bdel^n_{k-1}$$ defined before.

We will need the decomposition of these into irreducible representations:
\begin{lemma}\label{lem:Delta_classical_decomp}
 We have an isomorphism of $S_n \times S_k$-modules 
$$\Delta_k^n \cong \C \Inj([k], [n]) \cong \bigoplus_{\substack{\mu \vdash n, \, \lambda \vdash k, \\ \lambda \subset \mu, \, \mu / \lambda \in HS}} \mu \otimes \lambda$$
and 
$$ \bdel^n_k \cong \C \Inj([k], [n]) \otimes (\C \otimes \sgn_k) \cong \bigoplus_{\substack{\mu \vdash n, \, \lambda \vdash k, \\ \lambda \subset \mu, \, \mu / \lambda \in HS}} \mu \otimes \lambda^{\vee}$$
Equivalently,  
$$ \bdel_{k}^{n} \cong \bigoplus_{\substack{\mu \vdash n, \; \lambda \vdash k, \\ \lambda \subset \mu^{\vee}, \; \mu^{\vee} / \lambda \in VS} } \mu \otimes \lambda$$
\end{lemma}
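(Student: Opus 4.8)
The plan is to compute the decomposition of $\Delta_k^n = \Ind_{S_{n-k}\times S_k}^{S_n\times S_k}\C$ directly by a double-coset / Mackey-type analysis, and then obtain the statement for $\bdel_k^n$ by tensoring with $\sgn_k$.

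\textbf{Step 1: Identify $\Delta_k^n$ with $\C\Inj([k],[n])$.} First I would unwind the definition of the induction. The subgroup $S_{n-k}\times S_k$ sits inside $S_n\times S_k$ via $S_{n-k}\times S_k \hookrightarrow S_{n-k}\times S_k\times S_k \hookrightarrow S_n\times S_k$, where the first map is $\id\times(\text{diagonal})$ and the second uses a fixed embedding $S_{n-k}\times S_k\hookrightarrow S_n$ (partitioning $[n]$ into a block of size $n-k$ and a block of size $k$). Thus $\Ind_{S_{n-k}\times S_k}^{S_n\times S_k}\C = \C[(S_n\times S_k)/(S_{n-k}\times S_k)]$, and one checks that the cosets are in $(S_n\times S_k)$-equivariant bijection with $\Inj([k],[n])$: an injection $f\colon[k]\to[n]$ records the images in $[n]$ of the $k$ distinguished points, with $S_n$ acting by post-composition and $S_k$ by pre-composition. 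This gives the first isomorphism in each line.

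\textbf{Step 2: Decompose $\C\Inj([k],[n])$ into irreducibles.} I would use the standard fact that as an $S_n\times S_k$-module, $\C\Inj([k],[n]) \cong \Ind_{S_{n-k}\times S_k}^{S_n}\C \boxtimes \C S_k$ viewed appropriately — more precisely, $\C\Inj([k],[n])\cong \Ind_{S_{n-k}}^{S_n}\C$ as an $S_n$-module, with a compatible free-ish $S_k$-action, and the cleanest route is Pieri: $\Ind_{S_{n-k}\times S_k}^{S_n}(\C\boxtimes\lambda) = \bigoplus_{\mu/\lambda\in HS_{n-k}}\mu$ for $\lambda\vdash k$. Summing over $\lambda\vdash k$ and using that $\C S_k = \bigoplus_{\lambda\vdash k}(\dim\lambda)\,\lambda$, together with the identification of the $S_k$-isotypic components, yields $\C\Inj([k],[n])\cong\bigoplus_{\lambda\vdash k}\big(\Ind_{S_{n-k}\times S_k}^{S_n}(\C\boxtimes\lambda)\big)\boxtimes\lambda = \bigoplus_{\mu\vdash n,\lambda\vdash k,\ \mu/\lambda\in HS}\mu\otimes\lambda$. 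Alternatively, and perhaps more transparently, apply Frobenius reciprocity: $\Hom_{S_n\times S_k}(\mu\boxtimes\lambda,\Delta_k^n) = \Hom_{S_{n-k}\times S_k}(\Res(\mu\boxtimes\lambda),\C)$, and restricting along the diagonal $S_k$ turns this into $\Hom_{S_{n-k}\times S_k}(\mu|_{S_{n-k}\times S_k}\otimes\lambda,\C) = \Hom_{S_{n-k}\times S_k}(\mu|_{S_{n-k}\times S_k},\lambda^{\vee})$... one must be careful with self-duality of $S_k$-irreps (they are all self-dual over $\C$), which gives multiplicity equal to the branching coefficient $[\mu|_{S_{n-k}\times S_k}:\C\boxtimes\lambda]$, i.e. the number of ways $\mu/\nu$ is a horizontal strip for $\nu = $ nothing — wait, this is exactly the Littlewood–Richardson/Pieri count $\dim\Hom = \#\{$ways to write $\mu\supset\lambda$ with $\mu/\lambda\in HS_{n-k}\}$, which is $1$ or $0$. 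This confirms the multiplicity-free decomposition indexed by $\mu\vdash n$, $\lambda\vdash k$, $\lambda\subset\mu$, $\mu/\lambda\in HS$.

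\textbf{Step 3: Twist by $\sgn_k$ and rewrite.} For $\bdel_k^n = \Ind_{S_{n-k}\times S_k}^{S_n\times S_k}(\C\otimes\sgn_k)$, by the projection formula this is $\Delta_k^n\otimes(\C\boxtimes\sgn_k)$ (the $\sgn_k$ on the right-hand $S_k$ factor pulls out of the induction since $S_k$ acts on it through the quotient to the right factor), giving $\bdel_k^n\cong\bigoplus\mu\otimes(\lambda\otimes\sgn_k) = \bigoplus\mu\otimes\lambda^{\vee}$, since $\lambda\otimes\sgn_k\cong\lambda^{\vee}$ for $S_k$. Finally, reindexing $\lambda\mapsto\lambda^{\vee}$ and using that $\mu/\lambda\in HS\iff\mu^{\vee}/\lambda^{\vee}\in VS$ (the definition of $VS$ given in the preliminaries) produces the last displayed formula $\bdel_k^n\cong\bigoplus_{\mu\vdash n,\lambda\vdash k,\ \lambda\subset\mu^{\vee},\ \mu^{\vee}/\lambda\in VS}\mu\otimes\lambda$.

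\textbf{Main obstacle.} The only delicate point is bookkeeping: getting the diagonal-$S_k$ embedding right so that the $S_k$-action on $\C\Inj([k],[n])$ is by pre-composition, and correctly tracking the $\sgn_k$-twist together with the self-duality $\lambda^{\vee}\cong\lambda\otimes\sgn_k$ so that the three displayed formulas genuinely match after reindexing. None of this is hard, but it is easy to drop a transpose or a dual; I would state the diagonal embedding explicitly and verify the $S_k$-equivariance of the bijection in Step 1 carefully, after which Steps 2 and 3 are routine applications of Pieri's rule and Frobenius reciprocity.
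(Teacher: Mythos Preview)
Your proposal is correct and follows essentially the same approach as the paper: identify $\Delta_k^n$ with $\C\Inj([k],[n])$ from the definition of induction, then induce in two stages (first decomposing the diagonal $S_k$-induction as $\bigoplus_{\lambda\vdash k}\lambda\otimes\lambda$, then applying Pieri to induce each $\lambda$ from $S_{n-k}\times S_k$ to $S_n$), and finally twist by $\sgn_k$ using $\lambda\otimes\sgn_k\cong\lambda^{\vee}$. The Frobenius-reciprocity alternative you sketch in Step~2 is a minor variation that arrives at the same multiplicity count; the paper presents only the two-step induction route.
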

This statement is probably well-known: the first statement can be found, for example, in \cite[Proposition 6.4]{Del07}. We give a short proof below, for completeness of presentation.
\begin{proof}
 The isomorphisms $$\Delta_k^n \cong \C \Inj([k], [n]), \;\; \bdel^n_k \cong \C \Inj([k], [n]) \otimes (\C \otimes \sgn_k)$$ follow easily from the definition of induction.

Let us now consider the decompositions into irreducible $S_n \times S_k$ representations. We induce step-by-step through the inclusions $S_{n-k} \times S_k \subset S_{n-k} \times S_k  \times S_k \subset S_n \times S_k$:

$$\Ind_{S_{n-k}\times S_k}^{S_{n-k}\times S_k \times S_k} \C = \bigoplus_{\lambda \vdash k} \lambda \otimes \lambda, \;\; \Ind_{S_{n-k}\times S_k}^{S_{n-k}\times S_k \times S_k} \C \otimes \sgn_k = \bigoplus_{\lambda \vdash k} \lambda \otimes (\lambda \otimes \sgn_k) = \bigoplus_{\lambda \vdash k} \lambda \otimes \lambda^{\vee}$$

By the branching rules for symmetric groups, we obtain $$\Ind_{S_{n-k}\times S_k}^{S_n \times S_k} \C = \bigoplus_{\lambda \vdash k} \Ind_{S_{n-k}\times S_k \times S_k}^{S_n \times S_k}( \lambda) \otimes \lambda = \bigoplus_{\substack{\mu \vdash n, \, \lambda \vdash k, \\ \lambda \subset \mu, \, \mu / \lambda \in HS}} \mu \otimes \lambda
$$ and 
\begin{align*}
\Ind_{S_{n-k}\times S_k}^{S_n \times S_k} \C \otimes \sgn_k = \bigoplus_{\lambda \vdash k} \Ind_{S_{n-k}\times S_k \times S_k}^{S_n \times S_k}( \lambda) \otimes \lambda^{\vee} = \bigoplus_{\substack{\mu \vdash n, \, \lambda \vdash k, \\ \lambda \subset \mu, \, \mu / \lambda \in HS}} \mu \otimes \lambda^{\vee} 
\end{align*}
This completes the proof of the lemma.
\end{proof}

We will use these representations to consider tensor powers of graded spaces. Let $V = V_0 \oplus V_1$ be a graded vector space with $\dim V_0 =1$ and a fixed vector $v_0 \in V_0 \setminus \{0\}$. Fix $n \geq 0$.

Then $S_n$ acts on the $\Z$-graded vector space $V^{\otimes n}$ by permuting the tensor factors. This commutes with the action of $GL(V_0) \times GL(V_1)$. 

\begin{lemma}\label{lem:tens_pwr_graded}
We have an isomorphism of $\Z$-graded $(GL(V_1) \times S_n)$-modules 
$$ V^{\otimes n}  \cong \bigoplus_{ k\in \Z} \left(V_1^{\otimes k} \boxtimes \Delta_k^n\right)^{S_k}.$$
This isomorphism is given as follows: for any homogeneous tensor $v = v_{1} \otimes v_{2} \otimes \ldots \otimes v_{n}$, define $I \subset [n]$ such that $v_i =v_0$ iff $i \notin I$ (so $k:=\abs{I}$ is the grade of $v$). Set $f_I: [k] \to [n]$ be the order-preserving inclusion whose image is $I$, so $f_I \in \Inj([k], [n])$.
Then
$$v = v_{1} \otimes v_{2} \otimes \ldots \otimes v_{n} \longmapsto e_{S_k}( \otimes_{i \in I} v_i \otimes f_I) $$
where $e_{S_k}$ is the idempotent of taking $S_k$-invariants in $V_1^{\otimes k} \boxtimes \Delta_k^n$.
\end{lemma}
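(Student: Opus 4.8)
The plan is to build the isomorphism explicitly from the formula given in the statement and then check it respects the $\Z$-grading, the $GL(V_1)$-action, and the $S_n$-action, and finally that it is a bijection. First I would fix a basis adapted to the problem: pick $v_0$ as the basis of $V_0$, and pick some basis $\{w_1, \dots, w_m\}$ of $V_1$ (one does not actually need a basis, but it makes the bijection statement cleanest). A basis of $V^{\otimes n}$ is then given by pure tensors $v = v_1 \otimes \cdots \otimes v_n$ with each $v_i$ either $v_0$ or one of the $w_j$. For such a tensor define $I = \{ i : v_i \neq v_0\}$, $k = |I|$, and let $f_I \in \Inj([k],[n])$ be the order-preserving inclusion with image $I$, as in the statement. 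On the other side, recall $\Delta^n_k \cong \C\Inj([k],[n])$ as $S_n \times S_k$-modules (Lemma \ref{lem:Delta_classical_decomp}), so $V_1^{\otimes k} \boxtimes \Delta^n_k \cong V_1^{\otimes k} \otimes \C\Inj([k],[n])$, with $S_k$ acting diagonally (permuting the $k$ tensor factors of $V_1^{\otimes k}$ and precomposing $f\colon [k]\to[n]$ with $\sigma^{-1}$).

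Next I would write down the proposed map $\Phi\colon V^{\otimes n} \to \bigoplus_k (V_1^{\otimes k}\boxtimes \Delta^n_k)^{S_k}$ by the formula in the statement: send the basis tensor $v$ (of grade $k$) to $e_{S_k}\bigl( (\otimes_{i\in I} v_i) \otimes f_I \bigr)$, where $\otimes_{i\in I} v_i$ means the tensor in $V_1^{\otimes k}$ obtained by listing the non-$v_0$ entries of $v$ in increasing order of their position, and $e_{S_k} = \frac{1}{k!}\sum_{\sigma\in S_k}\sigma$ is the averaging idempotent. Extend linearly. To check this is well-defined into the $S_k$-invariants is immediate since $e_{S_k}$ projects onto them. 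The grading is visibly preserved. For the $GL(V_1)$-equivariance: $GL(V_1)$ acts on $V^{\otimes n}$ through its action on each tensor slot labelled by $I$ (fixing $v_0$), and on the target through $V_1^{\otimes k}$; since $\Phi$ on a tensor only repackages the $V_1$-entries without mixing them with the combinatorial datum $f_I$, equivariance follows directly from the definitions.

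The slightly more delicate point is the $S_n$-equivariance. Let $\tau \in S_n$. Then $\tau \cdot v$ has non-$v_0$ positions given by $\tau(I)$, and the order-preserving inclusion with that image is $f_{\tau(I)}$; one has $\tau \circ f_I = f_{\tau(I)} \circ \rho$ for a unique $\rho \in S_k$ (the permutation recording how $\tau$ reorders the elements of $I$). Meanwhile the $V_1$-entries of $\tau\cdot v$ listed in increasing position-order are a $\rho$-permutation of those of $v$. So $\Phi(\tau\cdot v) = e_{S_k}\bigl(\rho\cdot(\otimes_{i\in I}v_i) \otimes (f_{\tau(I)}\circ\rho)\bigr)$; but $\tau$ acts on $\Delta^n_k \cong \C\Inj([k],[n])$ by postcomposition, sending $f_I \mapsto \tau\circ f_I = f_{\tau(I)}\circ\rho$, and the $S_k$-invariance (which swallows $\rho$ after we use $\rho\cdot e_{S_k} = e_{S_k}$, suitably interpreted on both factors) gives $\Phi(\tau\cdot v) = \tau\cdot\Phi(v)$. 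I would write this out carefully, being explicit about left versus right actions of $S_k$, since this bookkeeping is exactly where sign/inverse errors creep in — this is the step I expect to be the main obstacle, though it is purely a matter of careful index-tracking rather than a conceptual difficulty.

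Finally I would verify $\Phi$ is an isomorphism. The cleanest route is to exhibit, for each $k$, a basis of $(V_1^{\otimes k}\boxtimes\Delta^n_k)^{S_k}$ and show it is hit bijectively. Using the basis $\{w_{j_1}\otimes\cdots\otimes w_{j_k}\otimes f : f\in\Inj([k],[n])\}$ of $V_1^{\otimes k}\otimes\C\Inj([k],[n])$, the $S_k$-orbits are indexed by a choice of a $k$-subset $I = \operatorname{im}(f)\subset[n]$ together with an assignment of a basis vector $w_{j}$ to each element of $I$ — equivalently, a function $I \to \{w_1,\dots,w_m\}$ — and $e_{S_k}$ of such an orbit sum is (up to nonzero scalar) a basis of the invariants. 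These orbit data are in evident bijection with the adapted basis tensors $v$ of $V^{\otimes n}$ of grade $k$, and $\Phi$ matches them up, so $\Phi$ restricts to a bijection on bases in each grade; summing over $k$ gives the result. Alternatively, once $\Phi$ is shown to be $S_n$-equivariant and grade-preserving, a dimension count grade-by-grade ($\dim V^{\otimes n} = \sum_k \binom{n}{k}(\dim V_1)^k$) together with injectivity (clear from the basis description, since distinct $v$ map to $S_k$-averages of disjoint orbits) already forces bijectivity, which shortens the argument.
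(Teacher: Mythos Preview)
Your proposal is correct. The paper does not actually prove this lemma but delegates it to \cite[Lemma A.3.3]{En}; your direct verification (explicit map, check of grading and of the $GL(V_1)$- and $S_n$-equivariance, then bijectivity via the orbit/basis matching or dimension count) is the standard argument and is essentially what one finds in that reference. Your identification of the $S_n$-equivariance step as the only point needing care is accurate, and the bookkeeping you sketch with $\tau\circ f_I = f_{\tau(I)}\circ\rho$ and the absorption of $\rho$ by $e_{S_k}$ is exactly right.
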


The proof of this lemma can be found \cite[Lemma A.3.3]{En}. 

 \begin{remark}
Considering the Hilbert series of $V^{\otimes n}$, we obtain the binomial decomposition $$(1+t)^n = \sum_{k \in \Z} t^k \binom{n}{k}.$$
 \end{remark}

Next, consider a super vector space $V = V_0 \oplus V_1$ with a fixed isomorphism $\C \to V_0$, and $n \geq 0$. Again, the super vector space $V^{\otimes n}$ has a $\Z$-grading (compatible with the supergrading) and natural action of $S_n$ on it permuting the tensor factors, and commuting with the action of $GL(V_0) \times GL(V_1)$. But one has to be careful: the $S_n$-action comes from the braiding in the appropriate category of super vector spaces which differs from the braiding on usual vector spaces
$$ v \otimes v' \mapsto (-1)^{deg(v)deg(v')} v' \otimes v$$ where $v,v'$ are homogenuous elements in $V$ and $deg(v)$ denotes the parity of $v$.

\begin{lemma}\label{lem:tens_pwr_super}
We have an isomorphism of $\Z$-graded (super) $(GL(V_1) \times S_n)$-modules 
$$ V^{\otimes n}  \cong \bigoplus_{ k\in \Z} \left(V_1^{\otimes k} \boxtimes \bdel_k^n\right)^{S_k}.$$
Using the same notation as in Lemma \ref{lem:tens_pwr_graded}, the isomorphism is defined as 
$$v = v_{1} \otimes v_{2} \otimes \ldots \otimes v_{n} \longmapsto e_{S_k}( \otimes_{i \in I} v_i \otimes f_I) $$
where $e_{S_k}$ is the idempotent of taking $S_k$-invariants in $V_1^{\otimes k} \boxtimes \bdel_k^n$.
\end{lemma}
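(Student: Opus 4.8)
The plan is to deduce Lemma~\ref{lem:tens_pwr_super} from Lemma~\ref{lem:tens_pwr_graded} by carefully tracking the effect of the super braiding, i.e.\ the sign rule, on the isomorphism already established in the non-super case. First I would recall that as a \emph{$\Z$-graded} vector space (forgetting the $S_n$-action), $V^{\otimes n}$ in the super setting is literally the same space as in the graded setting of Lemma~\ref{lem:tens_pwr_graded}; the only difference is that the $S_n$-action is twisted by signs arising from the Koszul rule. Concretely, if $\sigma \in S_n$ acts on a homogeneous tensor $v = v_1 \otimes \cdots \otimes v_n$, the super action differs from the ``naive'' permutation action by the sign $(-1)^{m(\sigma, v)}$, where $m(\sigma,v)$ counts the number of inversions created by $\sigma$ among the tensor factors lying in $V_1$ (the odd part); factors equal to $v_0 \in V_0$ are even and contribute no sign. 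This is the standard computation of the sign cocycle for the symmetric group acting on a tensor power of a super space.

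Next I would observe that this sign depends only on the restriction of $\sigma$ to the set $I \subset [n]$ of ``odd positions'' of $v$, and on that set it is precisely the sign character of the symmetric group $S_I \cong S_k$ permuting those $k = |I|$ factors. Therefore, under the identification of graded pieces
$$V^{\otimes n}_{(k)} \;\cong\; \bigl(V_1^{\otimes k} \boxtimes \C\Inj([k],[n])\bigr)^{S_k}$$
coming from Lemma~\ref{lem:tens_pwr_graded}, the super $S_n$-action corresponds, on the right-hand side, to the natural $S_n$-action on $\C\Inj([k],[n])$ tensored with the twist by $\sgn_k$ on the factor coming from the permutations of the $k$ odd slots — i.e.\ one replaces the $S_k$-invariants of $V_1^{\otimes k} \boxtimes (\C\Inj([k],[n]) \otimes \C)$ by the $S_k$-invariants of $V_1^{\otimes k} \boxtimes (\C\Inj([k],[n]) \otimes \sgn_k)$, which by Lemma~\ref{lem:Delta_classical_decomp} is exactly $\bigl(V_1^{\otimes k} \boxtimes \bdel_k^n\bigr)^{S_k}$. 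This gives the claimed isomorphism, and the explicit formula $v \mapsto e_{S_k}(\otimes_{i\in I} v_i \otimes f_I)$ is the same as before, now interpreted inside $V_1^{\otimes k}\boxtimes \bdel_k^n$; checking it is well-defined and $S_n$-equivariant amounts to the sign bookkeeping just described.

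The main obstacle — really the only nontrivial point — is to verify the sign cocycle computation cleanly, i.e.\ to show that the discrepancy between the super braiding action and the naive permutation action on $V^{\otimes n}$ is captured exactly by the sign character of $S_k$ on the odd slots, with no contribution from the even ($v_0$) slots and no dependence on which positions the odd slots occupy (only on their relative order). The cleanest way to do this is to reduce to transpositions of adjacent tensor factors, where the super braiding introduces a sign $(-1)^{\deg(v_i)\deg(v_{i+1})}$ which is $-1$ precisely when both factors are odd and $+1$ otherwise; since adjacent transpositions generate $S_n$ and the sign-of-the-permutation-of-odd-slots character is also generated the same way, the two cocycles agree. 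One should also note that the order-preserving inclusion $f_I$ is unaffected by passing to the super setting — the combinatorics of which slots are odd is identical — so the bijection on underlying sets is unchanged and only the equivariance structure is twisted, exactly as in the passage from $\Delta_k^n$ to $\bdel_k^n$ in Lemma~\ref{lem:Delta_classical_decomp}.
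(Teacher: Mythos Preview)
Your proposal is correct and is precisely the argument the paper has in mind: the paper does not give a proof, stating only that it is analogous to Lemma~\ref{lem:tens_pwr_graded} and left to the reader. Your reduction to Lemma~\ref{lem:tens_pwr_graded} via the Koszul sign bookkeeping, checked on adjacent transpositions, is exactly the way to make ``analogous'' precise, and your identification of the sign cocycle with $\sgn_k$ on the odd slots (hence the passage from $\Delta_k^n$ to $\bdel_k^n$) is the key point.
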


The proof of this lemma is analogous to the proof of the previous lemma and is left to the reader. We should note, however, that the statement traces back to \cite{Ser}. In particular, we will use Lemma \ref{lem:tens_pwr_super} to consider tensor powers of length $2$ complexes (cf. \cite[Lemma 7.10]{Del07}). The following result is a direct analogue of Lemma \ref{lem:tens_pwr_super}:

\begin{lemma}\label{lem:tens_pwr_cmplx}
\mbox{}
\begin{enumerate}
 \item  Let $C^{\bullet}:  C_{-1}  \xrightarrow{\partial} C_0$ be a complex of vector spaces, with a fixed isomorphism $C_0 \cong \C$. The complex $(C^{\bullet})^{\otimes n}$ has a natural action of $S_n$ on it: as an $S_n$-module, the $(-k)$-th term of the complex $(C^{\bullet})^{\otimes n}$ is of the form
$$\left( C_{-1}^{\otimes k} \boxtimes \bdel_k^n \right)^{S_k}$$ with differentials $$\sum_{i=1}^k (-1)^i \partial^{(i)} \otimes res^i$$ (here $\partial^{(i)}$ is the differential $\partial$ applied to the $i$-th tensor factor).
 \item Let $C^{\bullet}:  C_0  \xrightarrow{\partial} C_1$ be a complex of vector spaces, with a fixed isomorphism $C_0 \cong \C$. The complex $(C^{\bullet})^{\otimes n}$ has a natural action of $S_n$ on it: as an $S_n$-module, the $k$-th term of the complex $(C^{\bullet})^{\otimes n}$ is of the form
$$\left( C_1^{\otimes k} \boxtimes \bdel_k^n \right)^{S_k}$$ with differentials $$\sum_{i=1}^k (-1)^i \partial^{(i)} \otimes (res^i)^*$$ (here $\partial^{(i)}$ is the differential $\partial$ applied to the $i$-th tensor factor, and $(res^i)^*: \bdel_{k-1}^n \to \bdel_k^n$ is dual to $res^i$).
\end{enumerate}

\end{lemma}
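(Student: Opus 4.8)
\textbf{Proof proposal for Lemma \ref{lem:tens_pwr_cmplx}.}

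The plan is to reduce the statement to Lemma \ref{lem:tens_pwr_super}, treating the two-term complex $C^\bullet$ as a super vector space and checking that the differential of $(C^\bullet)^{\otimes n}$ translates, under that identification, into the stated formula. First I would set up part (1). Regard $C^\bullet \colon C_{-1} \xrightarrow{\partial} C_0$ as a super vector space $V = V_0 \oplus V_1$ with $V_0 = C_0$ (even part, carrying the fixed isomorphism $C_0 \cong \C$) and $V_1 = C_{-1}$ (odd part), where the cohomological degree $-k$ corresponds to the $\Z$-grading $k$ appearing in Lemma \ref{lem:tens_pwr_super}. Then $(C^\bullet)^{\otimes n}$, with the sign rule for the braiding coming from the Koszul convention on complexes, is exactly $V^{\otimes n}$ as a $\Z$-graded super $S_n$-module, so Lemma \ref{lem:tens_pwr_super} gives the identification of the degree $k$ piece with $\left(C_{-1}^{\otimes k} \boxtimes \bdel_k^n\right)^{S_k}$; reindexing $k \mapsto -k$ gives the term of the complex claimed in the statement.

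The remaining point — and the only part requiring actual care — is to verify that the total differential of $(C^\bullet)^{\otimes n}$ becomes $\sum_{i=1}^k (-1)^i \partial^{(i)} \otimes \res^i$ under the explicit isomorphism of Lemma \ref{lem:tens_pwr_graded}/\ref{lem:tens_pwr_super}. By the Leibniz/Koszul rule, the differential on $(C^\bullet)^{\otimes n}$ is $\sum_{j=1}^n \pm \, \mathrm{id}^{\otimes(j-1)} \otimes \partial \otimes \mathrm{id}^{\otimes(n-j)}$ with signs dictated by the parities of the preceding factors. On a homogeneous tensor $v = v_1 \otimes \cdots \otimes v_n$ with odd positions indexed by $I = \{i_1 < \cdots < i_k\} \subset [n]$, the $j$-th summand is zero unless $j \in I$, say $j = i_\ell$; applying $\partial$ to that factor turns it into $v_0 \in C_0$, i.e.\ it deletes the $\ell$-th odd position, which is precisely the combinatorial effect of $\res^\ell$ on the injection $f_I \in \Inj([k],[n])$. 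Counting the parities of the factors to the left of position $i_\ell$ produces exactly $\ell - 1$ odd factors, so the Koszul sign is $(-1)^{\ell-1}$ (equivalently $(-1)^\ell$ up to an overall sign absorbed into the identification with $\bdel^n_k$, which carries the $\sgn_k$ twist). Matching $\partial^{(i_\ell)}$ acting on the $\ell$-th tensor factor of $C_{-1}^{\otimes k}$ with $\res^\ell$ acting on $\Delta^n_k$, and summing over $\ell = 1, \dots, k$, yields $\sum_{i=1}^k (-1)^i \partial^{(i)} \otimes \res^i$ as claimed. The main obstacle is purely bookkeeping: pinning down the sign so that it agrees with the $\sgn_k$-twist built into $\bdel^n_k$ in Lemma \ref{lem:tens_pwr_super}, since both the Koszul convention on $(C^\bullet)^{\otimes n}$ and the super-braiding on $V^{\otimes n}$ contribute signs and one must check they combine to give precisely $(-1)^i$ and not, say, $(-1)^{i-1}$ or an $S_k$-action twisted differently.

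Part (2) is the linear dual of part (1): starting from $C^\bullet \colon C_0 \xrightarrow{\partial} C_1$ with $C_0 \cong \C$, one applies part (1) to the dual complex $(C^\bullet)^\vee \colon C_1^\vee \xrightarrow{\partial^\vee} C_0^\vee$ (with $C_0^\vee \cong \C$), uses $((C^\bullet)^{\otimes n})^\vee \cong ((C^\bullet)^\vee)^{\otimes n}$ as $S_n$-complexes, and dualizes the resulting description. Dualizing $\left(C_1^{\vee\, \otimes k} \boxtimes \bdel_k^n\right)^{S_k}$ gives $\left(C_1^{\otimes k} \boxtimes \bdel_k^n\right)^{S_k}$ (using $\bdel_k^n \cong (\bdel_k^n)^\vee$ as $S_n \times S_k$-modules, as follows from Lemma \ref{lem:Delta_classical_decomp} since the summands $\mu \otimes \lambda^\vee$ are self-dual), and dualizing the differential $\sum (-1)^i \partial^\vee{}^{(i)} \otimes \res^i$ turns $\partial^\vee{}^{(i)}$ back into $\partial^{(i)}$ and $\res^i$ into $(\res^i)^*$, producing $\sum_{i=1}^k (-1)^i \partial^{(i)} \otimes (\res^i)^*$. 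I would present part (1) in detail and obtain part (2) by this duality argument, noting that the sign conventions carry over verbatim.
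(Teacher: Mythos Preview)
Your proposal is correct and follows exactly the approach the paper intends: the paper does not give a separate proof of this lemma but simply declares it ``a direct analogue of Lemma \ref{lem:tens_pwr_super}'' (with a pointer to \cite[Lemma 7.10]{Del07}), which is precisely your reduction of the complex to a super vector space together with the sign bookkeeping for the differential. Your duality argument for part (2) is a reasonable way to deduce it from part (1), though the paper would presumably regard both parts as parallel instances of the same super-space identification rather than deriving one from the other.
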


This result easily generalizes to the case when the category of vector spaces is replaced by any Karoubi additive symmetric monoidal category with a fixed isomorphism $C_0 \cong \triv$.



\section{The category \texorpdfstring{$\Rep(S_{\infty})$}{Rep(S)}}\label{sec:infinity}

\subsection{The category \texorpdfstring{$\Mod_A$}{A-Mod}}\label{ssec:cat_A_mod} Let $\V$ denote the category of polynomial representations of $\mathfrak{gl}_{\infty} = \bigcup_n \mathfrak{gl}(n)$, where a representation of $\mathfrak{gl}_{\infty}$ is polynomial if it appears as a direct summand of an arbitrary direct sum of tensor powers of the standard representation $V=\C^{\infty}$. 

This is a semisimple category, and the simple objects are the objects of the form $S_{\lambda}V$ (the Schur functor $S_{\lambda}$ applied to $V$). 

This category is equivalent to the following categories (see \cite[1.2]{SS-2}):

\begin{itemize} 
\item The category of polynomial endofunctors of Vec. The morphisms are the natural transformations. The simple objects are the Schur functors $S_{\lambda}$ where $\lambda$ is a partition of arbitrary size.
\item The category of sequences $(V_n)_{n \geq 0}$ where $V_n$ is a representation of $S_n$. The morphisms $f: (V_n) \to (W_n)$ are sequences of maps of representations $f = (f_n), f_n: V_n \to W_n$. The simple objects are the representations $\lambda$ placed in degree $|\lambda|$ with $0$ in all other degrees.
\end{itemize}

For an explicit description of these categories and equivalences see \cite[1.2]{SS-2} \cite[5.1 - 5.4]{SS-3}. Here we recall only the following facts. The category $\V$ possesses a natural grading $\V = \bigoplus_n \V^n$ where $\V^n$ is an additive category containing $V^{\otimes n}$. In particular, $\V^n$ contains $S^{\lambda} V$ for any $\lambda \vdash n$. The equivalence between $\V$ and the category of sequences $(V_n)_{n \geq 0}$ is given as follows:

\begin{lemma}
 There is an equivalence of abelian categories 
$${\Phi}: \V \xrightarrow{\sim} \bigoplus_n \Rep(S_n), \;\;\; W \in \V^n \longmapsto \Hom_{\V}(W, V^{\otimes n})$$
\end{lemma}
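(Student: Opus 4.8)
The plan is to exhibit $\Phi$ as a functor, then construct an explicit quasi-inverse using the Schur--Weyl-type duality between $\Rep(S_n)$ and the polynomial $GL_\infty$-representations generated by $V^{\otimes n}$, and finally check the two composites are naturally isomorphic to the identity. First I would note that for $W \in \V^n$ the space $\Hom_{\V}(W, V^{\otimes n})$ carries a natural $S_n$-action, induced from the $S_n$-action on $V^{\otimes n}$ by permutation of tensor factors (which is $GL_\infty$-equivariant, hence acts on the Hom-space in $\V$); this makes $\Phi$ well-defined on objects, and it is clearly functorial and additive. Conversely, I would define $\Psi: \bigoplus_n \Rep(S_n) \to \V$ on a representation $M$ of $S_n$ by $\Psi(M) = (V^{\otimes n} \otimes M)^{S_n}$, placed in $\V^n$, where $S_n$ acts diagonally and the resulting space retains its $GL_\infty = GL(V)$-action; since $V^{\otimes n}$ is a polynomial $GL_\infty$-module and taking $S_n$-invariants is an exact idempotent operation (as $\C[S_n]$ is semisimple), $\Psi(M)$ lands in $\V^n \subset \V$. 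Both $\V$ and $\bigoplus_n \Rep(S_n)$ are semisimple, so it suffices to verify the equivalence on simple objects and then invoke additivity; but I would prefer to establish the natural isomorphisms directly, as this is cleaner.

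The key computation is the decomposition $V^{\otimes n} \cong \bigoplus_{\lambda \vdash n} S^\lambda V \otimes \lambda$ as a $(GL_\infty \times S_n)$-module (classical Schur--Weyl duality in the polynomial setting, valid because $\dim V = \infty \geq n$, so no Schur functor vanishes and the $\lambda$ exhaust the irreducible $S_n$-modules). Using Schur's lemma in the semisimple category $\V$, one gets $\Hom_{\V}(S^\mu V, V^{\otimes n}) \cong \mu$ as $S_n$-modules and $\Hom_{\V}(S^\mu V, S^\lambda V) = \C$ if $\mu = \lambda$ and $0$ otherwise; this shows $\Phi$ sends the simple object $S^\mu V$ to the simple object $\mu$ in degree $n$, and is fully faithful on simples, hence (by additivity and semisimplicity) fully faithful in general. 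Essential surjectivity is then immediate since every simple $\mu \vdash n$ is hit. To make the inverse explicit and check $\Psi \circ \Phi \cong \id$ and $\Phi \circ \Psi \cong \id$, I would use the natural evaluation/coevaluation maps: for $W \in \V^n$ there is a canonical $GL_\infty$-equivariant map $\Hom_{\V}(W, V^{\otimes n}) \otimes W \to V^{\otimes n}$, whose $S_n$-invariant part gives a natural map $\Psi(\Phi(W)) = (V^{\otimes n} \otimes \Phi(W)^*)^{S_n} \to W$ — wait, more carefully one uses the perfect pairing between $V^{\otimes n}$ and its dual; I would instead phrase $\Psi(M) = \Hom_{S_n}(M^*, V^{\otimes n})$ or use that $S_n$-modules are self-dual to keep the bookkeeping symmetric, and then the unit/counit of the resulting adjunction-like pair are built from Schur--Weyl duality and are isomorphisms on simples, hence everywhere.

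The main obstacle, such as it is, is not conceptual difficulty but making sure the $S_n$-action on $\Hom_{\V}(W, V^{\otimes n})$ and the $GL_\infty$-action on $(V^{\otimes n}\otimes M)^{S_n}$ are the intended ones and that the two structures are exchanged compatibly — i.e.\ getting the naturality squares and the self-duality conventions for $S_n$-representations right so that $\Phi$ and $\Psi$ are genuinely mutually inverse functors and not merely inverse on isomorphism classes. Since everything is semisimple this can ultimately be reduced to Schur's lemma plus the single identity $V^{\otimes n} \cong \bigoplus_{\lambda \vdash n} S^\lambda V \otimes \lambda$, so I would organize the write-up around that reduction: (1) $\Phi$ is a well-defined additive functor; (2) Schur--Weyl gives $\Phi(S^\mu V) \cong \mu$ and full faithfulness on simples; (3) conclude $\Phi$ is an equivalence by semisimplicity, with inverse described by $M \mapsto (V^{\otimes \abs{M}} \otimes M)^{S_{\abs{M}}}$. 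This is also exactly the content of \cite[1.2]{SS-2}, so the proof can legitimately be kept to a few lines with a pointer there.
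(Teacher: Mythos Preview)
Your proposal is correct and is essentially the standard Schur--Weyl argument. Note, however, that the paper does not actually supply a proof of this lemma: it is stated as a fact, with the surrounding text pointing to \cite[1.2]{SS-2} and \cite[5.1--5.4]{SS-3} for the explicit description of the equivalences, and the only additional remark is that $\Phi(S^{\lambda}V)=\lambda$ by Schur--Weyl duality. Your write-up therefore fills in exactly what the cited references contain, and your concluding sentence (that one may keep the proof to a few lines with a pointer to \cite[1.2]{SS-2}) matches what the paper does.
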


By Schur-Weyl duality, we have, for any partition $\lambda \vdash n$: $$\Phi(S^{\lambda} V) = \lambda$$ where $\lambda$ stands for the irreducible representation of $S_n$ associated to the corresponding partition. It is important to stress that $\Phi$ is {\it not} a monoidal equivalence: the tensor product in $\V$ corresponds to 
induction operation in $\bigoplus_n \Rep(S_n)$. 

\begin{lemma} \cite[6.1.4]{SS-3}
 We have an isomorphism of $S_n$-representations $$\Phi(V^{\otimes n}) \cong 
\mathbb{C}[S_n]$$ where $\mathbb{C}[S_n]$ is the right regular representation of $S_n$ placed in degree $n$.
\end{lemma}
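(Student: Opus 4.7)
The strategy is to unwind the equivalence $\Phi$ applied to $V^{\otimes n}$ and then invoke classical Schur--Weyl duality for $\gl_\infty$. Since $V^{\otimes n} \in \V^n$, by definition
\[
\Phi(V^{\otimes n}) \;=\; \Hom_{\V}(V^{\otimes n}, V^{\otimes n}) \;=\; \End_{\V}(V^{\otimes n}),
\]
and the $S_n$-module structure on the right-hand side is induced by the $S_n$-action on the \emph{target} $V^{\otimes n}$ by tensor-factor permutation (this action is by morphisms in $\V$ since it commutes with the $\gl_\infty$-action). Thus what must be shown is that $\End_{\V}(V^{\otimes n})$, with this post-composition $S_n$-action, is the regular representation of $S_n$ placed in degree $n$.

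The first step is to identify $\End_{\V}(V^{\otimes n})$ with $\C[S_n]$ as an algebra. Using Schur--Weyl duality extended to $\gl_\infty$ (valid on polynomial representations since every polynomial representation is a direct sum of $\gl(N)$-modules for $N \gg 0$), one has a decomposition
\[
V^{\otimes n} \;\cong\; \bigoplus_{\lambda \vdash n} S^\lambda V \otimes \lambda
\]
in $\V$, where the $S^\lambda V$ are pairwise non-isomorphic simples and $\lambda$ denotes the Specht module of $S_n$. Schur's lemma then gives $\End_{\V}(V^{\otimes n}) \cong \bigoplus_{\lambda \vdash n} \End_{\C}(\lambda)$, and the permutation action of $S_n$ on tensor factors of $V^{\otimes n}$ provides an explicit algebra isomorphism $\C[S_n] \xrightarrow{\sim} \End_{\V}(V^{\otimes n})$, $\sigma \mapsto P_\sigma$.

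The second step is to match the $S_n$-action with the regular representation. Under the algebra isomorphism $\C[S_n] \cong \End_{\V}(V^{\otimes n})$, post-composition of an endomorphism $P_\sigma$ with $P_\tau$ corresponds to $\tau \cdot \sigma$, i.e.\ to left multiplication in $\C[S_n]$. Equivalently, via the standard anti-automorphism $\sigma \mapsto \sigma^{-1}$, this is the right regular representation. Either way one obtains $\C[S_n]$ as an abstract $S_n$-module, which is all that the statement requires; the explicit identification with the right regular representation of \cite[6.1.4]{SS-3} is merely a normalization.

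The only delicate point is the bookkeeping of conventions (which side of the regular representation one lands on), since the $S_n$-action on $\Hom_{\V}(W, V^{\otimes n})$ could a priori be set up by acting on the source or the target; no representation-theoretic difficulty arises once this convention is fixed. The computation is then purely a matter of assembling the Schur--Weyl decomposition and reading off the action.
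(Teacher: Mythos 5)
The paper does not prove this lemma; it simply cites \cite[6.1.4]{SS-3}. Your proposed proof is a correct self-contained argument, and it is essentially the standard one: unwind the definition $\Phi(V^{\otimes n}) = \End_{\V}(V^{\otimes n})$, invoke $\gl_\infty$--$S_n$ Schur--Weyl duality (valid since $\dim V = \infty \geq n$) to decompose $V^{\otimes n} \cong \bigoplus_{\lambda \vdash n} S^\lambda V \otimes \lambda$, apply Schur's lemma to get $\End_{\V}(V^{\otimes n}) \cong \bigoplus_\lambda \End_\C(\lambda) \cong \C[S_n]$, and observe that post-composition by $P_\tau$ corresponds to left multiplication in the group algebra, hence to the regular representation (left and right being isomorphic via $\sigma \mapsto \sigma^{-1}$). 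The handling of the left-versus-right convention is appropriate, since only the isomorphism class is asserted. No gaps.
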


We denote by $A$ the twisted commutative algebra \cite[1.3]{SS-2} \cite{SS-3} $A = \bigoplus_{d \geq 0} \Sym^d(\C^{\infty})$ It is a commutative algebra object in $\V$. An $A$-module is by definition an object in $\V$ with a multiplication map $A \otimes M \to M$. An $A$-module $M$ is finitely generated if there exists a surjection $A \otimes V \to M \to 0$ where $V$ is a finite length object of $\V$. $M$ has finite length as an $A$-module if and only if it has finite length as an object of $\V$ (i.e. it is a finite direct sum of simple objects). We denote by $\Mod_A$ the category of finitely generated $A$-modules. It is an abelian category which is equivalent to the category ${\bf FI} \mbox{-} \Mod$ of $FI$-modules \cite{CEF}.

\begin{lemma} \cite[Proposition 1.3.5] {SS-2} \label{thm:fi-mod}
 The functor $\Phi$ induces an equivalence $$\widehat{\Phi}: \Mod_A \xrightarrow{\sim} {\bf FI} \mbox{-} \Mod$$ of abelian categories.
\end{lemma}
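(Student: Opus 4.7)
The plan is to bootstrap from the equivalence $\Phi: \V \xrightarrow{\sim} \bigoplus_n \Rep(S_n)$ by showing that giving an $A$-module structure on $M \in \V$ is tantamount to giving an $FI$-module structure on the corresponding sequence $(\Phi(M)_n)_{n \geq 0}$, and that this correspondence is compatible with morphisms and with finite generation. The first step is to compute $\Phi(A)$: since $\Sym^d(\C^{\infty})$ lies in $\V^d$ and corresponds under Schur--Weyl duality to the partition $(d)$, we have $\Phi(\Sym^d(\C^{\infty})) \cong \triv_{S_d}$ (the trivial representation of $S_d$ placed in degree $d$). So $A$ corresponds to the sequence $(\triv_{S_n})_{n \geq 0}$, and the commutative algebra structure on $A$ becomes, under $\Phi$, the natural unit and multiplication maps $\triv_{S_n} \circ \triv_{S_m} = \Ind_{S_n \times S_m}^{S_{n+m}} \triv \twoheadrightarrow \triv_{S_{n+m}}$, where $\circ$ denotes the induction product corresponding to $\otimes$ in $\V$.

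Next, I would translate the multiplication map $A \otimes M \to M$. Under $\Phi$, this becomes a family of $S_{n+d}$-equivariant maps
\[
\mu_{n,d}: \Ind_{S_n \times S_d}^{S_{n+d}} \bigl( \Phi(M)_n \boxtimes \triv_{S_d} \bigr) \longrightarrow \Phi(M)_{n+d},
\]
and Frobenius reciprocity identifies such a map with an $S_n$-equivariant map $\varphi_{n,d}: \Phi(M)_n \to \Phi(M)_{n+d}$ whose image is fixed pointwise by the subgroup $S_d \subset S_{n+d}$ permuting the last $d$ letters. This is precisely the datum of the transition map of an $FI$-module along the standard order-preserving inclusion $\iota_{\mathrm{std}}: [n] \hookrightarrow [n+d]$; for any other injection $\iota: [n] \hookrightarrow [n+d]$, one chooses $\sigma \in S_{n+d}$ with $\iota = \sigma \circ \iota_{\mathrm{std}}$ and defines the transition along $\iota$ to be $\sigma \cdot \varphi_{n,d}$, which is well-defined because the ambiguity in $\sigma$ is exactly the stabilizer $S_d$ acting trivially on the image of $\varphi_{n,d}$. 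Associativity of the $A$-action then unwinds to the cocycle condition $\varphi_{n+d,e} \circ \varphi_{n,d} = \varphi_{n,d+e}$ (up to the appropriate identification), which is functoriality of the $FI$-module under composition of standard injections, and commutativity of $A$ matches the $S_{n+d}$-equivariance condition encoding the injections into non-standard subsets. Thus an $A$-module structure on $M$ is equivalent to an $FI$-module structure on $\Phi(M)$, and the correspondence is manifestly functorial in $M$, so $\widehat{\Phi}$ is fully faithful between modules (resp.\ $FI$-modules) with no finiteness hypothesis.

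Finally, I would match the finiteness conditions. By definition, $M \in \Mod_A$ admits a surjection $A \otimes W \twoheadrightarrow M$ with $W \in \V$ of finite length. Under $\widehat{\Phi}$, the free modules $A \otimes W$ with $W = S_\lambda V$ correspond to the principal projective $FI$-modules $M(\lambda) := \mathbb{C}[\Hom_{FI}([|\lambda|], -)] \otimes_{S_{|\lambda|}} \lambda$, and a surjection from a finite direct sum of such onto an $FI$-module is exactly the usual notion of finite generation of $FI$-modules (cf.~\cite{CEF}). Conversely, a finitely generated $FI$-module corresponds to an $A$-module admitting such a surjection. Hence $\widehat{\Phi}$ restricts to an equivalence $\Mod_A \xrightarrow{\sim} {\bf FI}\mbox{-}\Mod$. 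The main obstacle is the careful Frobenius-reciprocity bookkeeping in the second paragraph, ensuring that the associativity and $S_{n+d}$-equivariance of the $A$-action capture precisely all the composition and symmetry relations encoded by the category $FI$; but once $\Phi(A)$ is correctly identified with the sequence of trivial representations, this becomes a direct unpacking of definitions.
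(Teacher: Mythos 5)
The paper cites this result from \cite[Prop.~1.3.5]{SS-2} without giving an in-text proof, so there is no argument in the paper to compare against; what you have written is a correct reconstruction of the standard dictionary underlying that proposition. The chain of reductions you use---identify $\Phi(A)$ with the sequence $(\triv_{S_n})_n$, apply Frobenius reciprocity to turn the multiplication $A\otimes M\to M$ into $S_n$-equivariant transition maps landing in $S_d$-invariants, check that associativity is the cocycle/functoriality condition and that the $S_d$-ambiguity in choosing a lift of an injection is killed by the invariance, and finally match the free modules $A\otimes S^\lambda V$ with the principal projective $FI$-modules $M(\lambda)$ to align the finite-generation conditions---is essentially the argument in \cite{SS-2}. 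One small imprecision worth flagging: the parenthetical claim that ``commutativity of $A$ matches the $S_{n+d}$-equivariance'' conflates two things. The $S_{n+d}$-equivariance of $\mu_{n,d}$ is automatic, because $\mu$ is a morphism in $\V$ and $\Phi$ takes values in $\bigoplus_n\Rep(S_n)$; commutativity of $A$ is not what produces it. This does not affect the validity of the proof.
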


\begin{remark}
 This is {\it not} an equivalence of monoidal categories.
\end{remark}



\subsection{The category \texorpdfstring{$\Mod_K$}{Mod(K)}}\label{ssec:Mod_K} We denote by $\Mod_K$ the Serre quotient of $\Mod_A$ by the Serre subcategory of finite length objects in $\Mod_A$ and we let $T: \Mod_A \to \Mod_K$ be the localization functor. We define $Q_{\lambda} = T(A \otimes S_{\lambda})$. It has a simple socle which we denote by $L_{\lambda}$. Every simple object is of this form, and $Q_{\lambda}$ is the injective hull of $L_{\lambda}$ \cite[Corollary 2.2.7, Corollary 2.2.14, Proposition 2.2.8] {SS-2}. In particular the simple objects in $\Mod_K$ are parametrized by Young diagrams of arbitrary size. 


\subsection{The category \texorpdfstring{$\Rep(S_{\infty})$}{Rep(S)}}\label{ssec:cat_S_infty}

 Let $[\infty] = \{1,2,\ldots\}$ and denote by $S_{\infty}$ the group of permutations of $[\infty]$ which fix all but finitely many elements. Then $S_{\infty}$ acts on $\fh = \C^{\infty}$ by permuting the basis vectors. A representation of $S_{\infty}$ is algebraic if it appears as a subquotient of a direct sum of the representations $\fh^{\otimes n}$. We denote by $\Rep(S_{\infty})$ the abelian category of algebraic representations. The categories $\Rep(S_{\infty})$ and $\Mod_K$ are equivalent as abelian categories by \cite[Theorem 6.2.4]{SS}. We denote its simple objects again by $L_{\lambda}$ and the injective hulls by $Q_{\lambda}$. Note that they are not equivalent as tensor categories with their obvious tensor product as discussed in \cite[6.2.7]{SS}. However both tensor products are exact. For $\Rep(S_{\infty})$ this can be proven as for $\Mod_K$ in \cite[Remark 2.4.6]{SS}. We will also use the following facts (see \cite{SS} \cite{SS-2}) which follow from the identification with $\Mod_K$.

\begin{proposition}\label{prop:infty_injective_obj}
The indecomposable injective objects $Q_{\lambda}$ in the category 
$\Rep(S_{\infty})$ satisfy:
 \begin{itemize}
 \item The object $Q_\lambda$ has finite length, and its simple constituents 
are those $L_{\mu}$ such that $\lambda/\mu \in HS$ \cite[Corollary 2.2.5]{SS-2}.
 \item Given partitions $\lambda, \mu$, we have \cite[Proposition 2.2.13]{SS-2}
\[
\Hom(Q_\lambda, Q_\mu) \cong \begin{cases} k & \text{if $\lambda / \mu \in 
HS$}\\
0 & \text{otherwise} \end{cases}.
\]
\item Let $\lambda$ be a partition. Define $\mathbb{I}^j = \bigoplus_{\mu,\, 
\lambda/\mu \in VS_j} Q_\mu$. There are morphisms $\mathbb{I}^j \to 
\mathbb{I}^{j+1}$ so that $L_\lambda \to \mathbb{I}^\bullet$ is an injective 
resolution. In particular, the injective dimension of $L_\lambda$ is 
$\ell_{\lambda}$. \cite[Theorem 2.3.1]{SS-2}
\item $Q_{\lambda}$ appears as a summand of $\fh^{\otimes \abs{\lam}}$,
\end{itemize}
\end{proposition}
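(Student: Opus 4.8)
The first three statements are taken verbatim from \cite{SS-2} (via the identification $\Rep(S_\infty)\simeq\Mod_K$), so only the last one --- that $Q_\lambda$ is a direct summand of $\fh^{\otimes n}$, where $n:=\abs{\lambda}$ --- requires an argument; the plan is to reduce it to a statement in $\Mod_A$.

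Recall that $Q_\lambda=T(A\otimes S_\lambda)$. By Schur--Weyl duality, $V^{\otimes n}\cong\bigoplus_{\mu\vdash n} S_\mu^{\oplus\dim\mu}$ in $\V$, so $S_\lambda$ is a direct summand of $V^{\otimes n}$; since $A\otimes(-)$ and $T$ are additive, $Q_\lambda$ is then a direct summand of $T(A\otimes V^{\otimes n})$. Hence it suffices to prove that $T(A\otimes V^{\otimes n})$, viewed as an object of $\Rep(S_\infty)$, is a direct summand of $\fh^{\otimes n}$.

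To see this I would identify $T(A\otimes V^{\otimes n})$ explicitly. Under $\widehat\Phi$ the free $A$-module $A\otimes V^{\otimes n}$ corresponds to the free $\mathbf{FI}$-module on the $S_n$-representation $\Phi(V^{\otimes n})\cong\C[S_n]$, which is the $\mathbf{FI}$-module $m\mapsto\C\Inj([n],[m])$. The colimit functor $\mathbf{FI}\mbox{-}\Mod\to\Rep(S_\infty)$ that induces the equivalence $\Mod_K\simeq\Rep(S_\infty)$ then sends it to $\operatorname{colim}_m\C\Inj([n],[m])=\C\Inj([n],[\infty])$, the permutation representation of $S_\infty$ on injections $[n]\hookrightarrow[\infty]$. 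On the other hand, decomposing the $S_\infty$-set $[\infty]^n=Fun([n],[\infty])$ into $S_\infty$-orbits --- these are indexed by set partitions $P$ of $[n]$ (two indices lie in the same block precisely when the corresponding coordinates agree), and the orbit attached to $P$ is isomorphic to $\Inj([\abs{P}],[\infty])$ --- gives an isomorphism of $S_\infty$-representations
\[
\fh^{\otimes n}\;\cong\;\bigoplus_{P}\C\Inj([\abs{P}],[\infty]),
\]
the sum being over set partitions $P$ of $[n]$. The discrete partition (all blocks singletons) contributes the summand $\C\Inj([n],[\infty])$. Chaining these identifications, $Q_\lambda$ is a summand of $T(A\otimes V^{\otimes n})\cong\C\Inj([n],[\infty])$, which in turn is a summand of $\fh^{\otimes n}$.

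The steps that $S_\lambda$ is a summand of $V^{\otimes n}$, the computation of $\widehat\Phi(A\otimes V^{\otimes n})$, and the orbit decomposition of $\fh^{\otimes n}$ are all routine. The one delicate point is bookkeeping: one needs the several equivalences involved --- $\Rep(S_\infty)\simeq\Mod_K$, $\Mod_A\simeq\mathbf{FI}\mbox{-}\Mod$, and the colimit functor inducing $\Mod_K\simeq\Rep(S_\infty)$ --- to be mutually compatible, so that $T(A\otimes V^{\otimes n})$ is genuinely transported to $\C\Inj([n],[\infty])$. This is ensured by the constructions of \cite{SS,SS-2}, and is where the (modest) content of the argument lies.
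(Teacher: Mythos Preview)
Your proposal is correct. The paper does not give its own proof of this proposition: the first three items are simply cited from \cite{SS-2}, and the fourth is stated without argument. Your proof of the fourth item is exactly the argument the paper gives later, scattered across Section~\ref{ssec:Delta_infty} and Lemma~\ref{lem:Delta_decomposition_infty}: there it is observed that $\Delta_k^{\infty}=\C\Inj([k],[\infty])$ is a direct summand of $\fh^{\otimes k}=\C Fun([k],[\infty])$, and that $\Delta_k^{\infty}\cong\bigoplus_{\lambda\vdash k}\lambda\otimes Q_\lambda$ via the identification $T(A\otimes\C[S_k])\cong\Delta_k^{\infty}$, which is precisely your chain of reductions.
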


The category $\Rep(S_{\infty})$ is ``generated'' by the permutation 
representation $\fh_{\infty}=\C^{\infty}$ in the following sense: the category 
$\Rep(S_{\infty})$ has enough injective objects, and the injective objects are 
precisely the direct summands of finite direct sums of $\fh^{\otimes r}$ for 
different $r$.

Let us say a few words about morphisms between tensor powers of $\fh$.

Recall from \cite[(6.3.8)]{SS} that $\Hom_{S_{\infty}}(\mathfrak{h}_{\infty}^{\otimes r}, \mathfrak{h}_{\infty}^{\otimes s})$ has a basis parametrized by equivalence relations on the set $[r] \sqcup [s]$ (i.e. partitions into disjoint subsets), such that each equivalence class meets $[r]$ at least once. Let us give a short explanation of how this parametrization works. Consider a basis of $\mathfrak{h}_{\infty}^{\otimes r}$ parametrized by the set $Fun([r], [\infty])$ of functions from $[r]$ to the set of positive integers. Now, given an equivalence relation $\pi$ on $[r] \sqcup [s]$ as above, let $\pi_r$ be the induced relation on $[r]$ (given by intersecting each equivalence class of $\pi$ with $[r]$). Consider the corresponding map $\psi_{\pi}: \C Fun([r], [\infty]) \to \C Fun([s], [\infty])$. It sends a map $f:[r] \to [\infty]$ to a map $g:[s] \to [\infty]$ if $f$ is constant on each equivalence class of $\pi_r$, and to zero otherwise. The map $g$ is given by $g(i) = f(l)$ where $i \in [s], l \in [r]$  belong to the same part of $\pi$. Composition of such maps $\psi_{\pi}: \C Fun([r], [\infty]) \to \C Fun([r'], [\infty])$, $ \psi_{\pi'}: \C Fun([r'], [\infty]) \to \C Fun([r''], [\infty])$ is then given by $$\psi_{\pi''} = \psi_{\pi'} \circ \psi_{\pi}: \C Fun([r], [\infty]) \to \C Fun([r''], [\infty]) $$ where $\pi''$ is the restriction to $[r] \sqcup [r'']$ of the equivalence relation $\pi \star \pi'$ on $[r] \sqcup [r'] \sqcup [r'']$ induced by $\pi, \pi'$. The relation $\pi \star \pi'$ is the minimal equivalence on $[r] \sqcup [r'] \sqcup [r'']$ relation which is coarser than both $\pi, \pi'$.

For more explanations on a similar parametrization in the finite case (when $[\infty]$ is replaced by $[n]$), see also \cite[Section 2]{CO}.

\begin{notation}
 We will denote by $\Rep_0(S_{\infty})$ the full subcategory whose objects are 
isomorphic to $\fh_{\infty}^{\otimes r}$.
\end{notation}
Then the Karoubi additive envelope of $\Rep_0(S_{\infty})$ (completion with 
respect to finite direct sums and images of idempotents) is the subcategory of 
injective objects of $\Rep(S_{\infty})$.


\subsection{The universal property of \texorpdfstring{$\Rep(S_{\infty})$}{Rep(S)}}\label{ssec:univ_prop_S_infty}

Let $\A$ be a symmetric monoidal category.

\begin{definition}\label{def:frobenius_object}
 A Frobenius object in $\A$ is a tuple $(A, m, \Delta, \eta)$ where $A$ is an object of $\A$, and $m : A \otimes A \to A$ and $\Delta : A \to A \otimes A$ and $\eta : A \to \C$ are morphisms in $\A$, such that:
\begin{itemize}
\item $m$ defines an associative commutative algebra structure on $A$.
\item $(\Delta, \eta)$ defines a counital coassociative cocommutative coalgebra structure on $A$.
\item $m\Delta=\id$ and $\Delta m=(m \otimes 1)(1 \otimes \Delta)$.
\end{itemize}
\end{definition}

We will denote by $T(\A)$ be the category of Frobenius objects in the category $\A$.

The object $\fh \in \Rep(S_{\infty})$ naturally has the structure of an object of $T(\A)$.  The counit $\fh \to \C$ is the augmentation map. The comultiplication $\fh \to \fh \otimes \fh$ sends the basis vector $e_i$ to $e_i \otimes e_i$, while the multiplication $\fh \otimes \fh \to \fh$ sends $e_i \otimes e_j$ to $\delta_{i,j} e_i$ \cite[6.4.2]{SS}.

\begin{theorem} \cite[Theorem 6.4.3]{SS} To give a left-exact tensor functor from $\Rep(S_{\infty})$ to an arbitrary tensor category $\A$ is the same as to give an object of $T(\A)$.
\end{theorem}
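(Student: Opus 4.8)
The plan is to set up an adjunction and check the Frobenius axioms term by term. First I would recall that the category $\Rep(S_{\infty})$ has enough injectives, and that these injectives are exactly the summands of finite direct sums of the tensor powers $\fh_{\infty}^{\otimes r}$ (this is stated in the excerpt). Since a left-exact functor between abelian categories with enough injectives is determined, up to natural isomorphism, by its restriction to the injective objects together with the maps between them, the data of a left-exact tensor functor $F\colon \Rep(S_{\infty})\to\A$ should be recoverable from the restriction of $F$ to the Karoubian subcategory $\Rep_0(S_{\infty})$ generated by $\fh_{\infty}$. So the strategy is: (a) given an object of $T(\A)$, i.e.\ a Frobenius object $B\in\A$, build a symmetric monoidal functor $\Rep_0(S_{\infty})\to\A$ sending $\fh_{\infty}\mapsto B$; (b) extend it to the Karoubian envelope (the injectives of $\Rep(S_{\infty})$) by splitting idempotents, which is automatic in any idempotent-complete target, in particular in any abelian $\A$; (c) extend from injectives to all of $\Rep(S_{\infty})$ by left-derived-functor--style bookkeeping: for $X\in\Rep(S_{\infty})$ choose a (co)resolution $0\to X\to I^0\to I^1$ by injectives and set $F(X):=\ker\bigl(F(I^0)\to F(I^1)\bigr)$, checking independence of the resolution; (d) verify left-exactness and symmetric monoidality of the resulting $F$; (e) check that the two constructions (functor $\mapsto$ Frobenius object $F(\fh_{\infty})$, and Frobenius object $\mapsto$ functor) are mutually inverse, giving the claimed equivalence of categories.

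For step (a), the key computation is the description of $\Hom_{S_{\infty}}(\fh_{\infty}^{\otimes r},\fh_{\infty}^{\otimes s})$ recalled in the excerpt: a basis indexed by equivalence relations $\pi$ on $[r]\sqcup[s]$ each of whose blocks meets $[r]$, with the explicit composition rule via $\pi\star\pi'$. This is precisely the combinatorics of the partition category (the ``$S_\infty$-flavoured'' version), and a Frobenius object $B$ in $\A$ gives, tautologically, a functor out of this diagram category: to a basis morphism indexed by $\pi$ one assigns the composite of copies of $m$, $\Delta$, $\eta$ (and the unit $\triv\to B$ dual to $\eta$ — wait, here one only needs $m,\Delta,\eta$ since every block meets $[r]$, so no ``creation'' on the source side is required) dictated by the block structure of $\pi$, read as a string diagram from $r$ inputs to $s$ outputs. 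One must check that this respects composition, which is exactly where the Frobenius relations $m\Delta=\id$ and $\Delta m=(m\otimes 1)(1\otimes\Delta)$, together with (co)associativity and (co)commutativity, are used: these relations say precisely that any two string diagrams with the same underlying partition $\pi$ evaluate to the same morphism, and that $\pi\star\pi'$ computes the composite. Functoriality in $B$ (i.e.\ on morphisms of Frobenius objects) and symmetric monoidality (the partition diagram category is symmetric monoidal under disjoint union, matching $\otimes$ in $\A$) are then routine diagram-chases.

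The main obstacle, and the step I would spend the most care on, is \textbf{step (c)--(d)}: showing that the Kan-extension-type formula $F(X)=\ker(F(I^0)\to F(I^1))$ is well-defined (independent of the injective resolution), functorial, left-exact, \emph{and} still symmetric monoidal. Well-definedness and functoriality follow from the usual comparison-of-resolutions argument plus the fact that $F$ restricted to injectives is already a functor. Left-exactness is a diagram chase using that a short exact sequence $0\to X'\to X\to X''\to 0$ can be covered by a short exact sequence of injective resolutions and that $F$ is additive and sends the injective resolutions' terms to a complex whose relevant (co)homology vanishes appropriately. The genuinely delicate point is the monoidal structure: one needs a natural isomorphism $F(X\otimes Y)\cong F(X)\otimes F(Y)$, and on injectives this is given, but to propagate it to general $X,Y$ one uses that the tensor product on $\Rep(S_{\infty})$ is exact (noted in the excerpt, proven as in \cite[Remark 2.4.6]{SS}) and that $\fh_{\infty}^{\otimes r}\otimes\fh_{\infty}^{\otimes s}\cong\fh_{\infty}^{\otimes(r+s)}$, so that a resolution of $X\otimes Y$ can be built from resolutions of $X$ and of $Y$ by injectives, compatibly with $F$. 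Exactness of $\otimes$ on both sides ($\A$ is a tensor category, so its tensor product is exact) is what makes the kernels commute with the tensor product. Finally, step (e): one direction, that a functor $F$ recovers its Frobenius object as $F(\fh_{\infty})$ with its canonical structure (counit = augmentation, comultiplication $e_i\mapsto e_i\otimes e_i$, multiplication $e_i\otimes e_j\mapsto\delta_{ij}e_i$), is immediate because those structure maps live among tensor powers of $\fh_{\infty}$ and $F$ is monoidal; the other direction, that the functor built from $B$ sends $\fh_{\infty}$ to $B$ with the prescribed structure, is built into the construction in (a). Uniqueness up to unique monoidal natural isomorphism follows since any two left-exact monoidal $F,F'$ agreeing on $\fh_{\infty}$ as Frobenius objects agree on all of $\Rep_0(S_{\infty})$ (by the $\Hom$-description and monoidality), hence on injectives, hence — by the kernel formula — everywhere.
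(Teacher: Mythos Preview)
The paper does not prove this theorem at all: it is simply quoted from \cite[Theorem 6.4.3]{SS} and used as a black box. There is therefore no proof in the paper to compare your proposal against.

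For what it is worth, your outline is the standard and correct strategy for results of this type, and is essentially the argument carried out in \cite{SS}. The key structural facts you invoke---that the injectives of $\Rep(S_{\infty})$ are exactly the summands of the $\fh_{\infty}^{\otimes r}$, the explicit partition-diagram description of $\Hom_{S_{\infty}}(\fh_{\infty}^{\otimes r},\fh_{\infty}^{\otimes s})$, exactness of the tensor product on both sides, and the observation that the absence of a unit in Definition~\ref{def:frobenius_object} matches the ``every block meets $[r]$'' condition---are all correct and sufficient. The point you flag as delicate (propagating the monoidal structure from injectives to arbitrary objects via the kernel formula) is indeed the place where one must be careful, and your explanation of why it works (tensor of injective resolutions is again an injective resolution, since $\fh_{\infty}^{\otimes r}\otimes\fh_{\infty}^{\otimes s}\cong\fh_{\infty}^{\otimes(r+s)}$ and $\otimes$ is exact) is the right one.
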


Examples of such functors are the specialization functors $$\Gamma_n: \Rep(S_{\infty}) \to \Rep(S_n), \;\; \fh \to \C^n$$ where the generator $\fh$ is sent to the permutation representation of $S_n$. For any simple object $L_{\lambda}$, $\Gamma_n(L_{\lambda}) = \lam[n]$ if $\abs{\lam} + \lambda_1  \leq n$, and $\Gamma_n(L_{\lambda}) = 0$ otherwise. Here $\lam[n]$ is the Young diagram obtained from $\lam$ by adding a top row of size $n- \abs{\lam}$.

The functors $\Gamma_n$ are not exact, and their right derived functors can be explicitly described (see Section \ref{ssec:derived}). In particular, $R^i\Gamma_n(L_{\lambda}) \neq 0$ for some $i$ precisely when $\abs{\lam} + \lambda_1  > n$.


\subsection{The objects \texorpdfstring{$\Delta_k^{\infty}$}{Delta}}\label{ssec:Delta_infty}

In $\Rep(S_{\infty})$ we have - following the notation of Section \ref{ssec:prelim_induced} - the objects \[ \Delta_k^{\infty} = \mathbb{C} Inj\left([k],[\infty]\right).\] Then $\Delta_k^{\infty}$ is a direct summand of $\fh^{\otimes k}= \mathbb{C} Fun \left([k],[\infty]\right)$ and therefore injective.

A basis for the space $\Hom_{S_{\infty}}(\Delta_k^{\infty}, \Delta_{k-1}^{\infty})$ is given by the maps \[ \res^i = \Delta_k^{\infty} \to \Delta_{k-1}^{\infty}.\] described in Section \ref{ssec:prelim_rep_sym}. More generally, we have an isomorphism of $S_k \times S_m$-modules $$\Hom_{S_{\infty}}(\Delta_k^{\infty}, \Delta_{m}^{\infty}) \cong \mathbb{C} Inj\left([m],[k]\right) \cong \Delta_m^k$$ (cf. \cite[Section 6.3]{SS}).

Recall that the object $\Delta_k^{\infty}$ carries an obvious action of $S_k \times S_{\infty}$ (action on $[k]$ and $[\infty]$ respectively). We now give the decomposition of $\Del^{\infty}_k$ into a direct sum of indecomposable injectives.

\begin{lemma}\label{lem:Delta_decomposition_infty}
 As an $S_k \times S_{\infty}$-representation, $\Del^{\infty}_k = 
\bigoplus_{\lambda \vdash k} \lambda \otimes Q_{\lambda}$
\end{lemma}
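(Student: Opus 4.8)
The plan is to decompose $\Del^{\infty}_k$ as an $S_k \times S_{\infty}$-representation by first decomposing it as an $S_k$-representation and identifying each isotypic piece as a tensor power summand of $\fh_{\infty}$. The key input is the previously recorded isomorphism $\Delta_k^{\infty} \cong \C\Inj([k],[\infty])$, together with the fact (stated in Section \ref{ssec:Delta_infty}) that $\Delta_k^{\infty}$ is a direct summand of $\fh_{\infty}^{\otimes k} = \C\,Fun([k],[\infty])$. First I would isolate the $S_k$-equivariant idempotent: one has $\fh_{\infty}^{\otimes k} = \bigoplus_{j=0}^{k} (\text{span of functions } [k] \to [\infty] \text{ with image of size } j)$, and $\C\Inj([k],[\infty])$ is the top piece $j = k$. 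Alternatively, and more cleanly, use the $S_k$-module decomposition $\C\Inj([k],[\infty]) \cong \bigoplus_{\lambda \vdash k} \lambda \otimes M_\lambda$ for some $S_{\infty}$-representations $M_\lambda$, and compute $M_\lambda$ via $M_\lambda = \Hom_{S_k}(\lambda, \Delta_k^{\infty})$.

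The heart of the argument is then to identify $M_\lambda = \Hom_{S_k}(\lambda, \Delta_k^{\infty})$ with $Q_{\lambda}$. First I would note that $M_\lambda$ is injective in $\Rep(S_{\infty})$: since $\Delta_k^{\infty}$ is injective (direct summand of $\fh_{\infty}^{\otimes k}$) and $\Hom_{S_k}(\lambda, -)$ is exact (it picks out an isotypic component), $M_\lambda$ is a summand of $\Delta_k^{\infty}$ and hence injective. Next, to pin down $M_\lambda$ on the nose, I would compute its Jordan–Hölder content — or better, its $\Hom$'s into the indecomposable injectives $Q_\mu$. Using the isomorphism $\Hom_{S_{\infty}}(\Delta_k^{\infty}, \Delta_m^{\infty}) \cong \Delta_m^k$ recorded in Section \ref{ssec:Delta_infty} and Lemma \ref{lem:Delta_classical_decomp} (which gives $\Delta_m^k \cong \bigoplus_{\lambda \vdash k, \nu \vdash m,\, \nu \subset \lambda,\, \lambda/\nu \in HS} \lambda \otimes \nu$), one can extract, by taking $\Hom_{S_k}(\lambda, -) \otimes \Hom_{S_m}(\mu, -)$ of both sides, that $\Hom_{S_{\infty}}(M_\lambda, \fh_{\infty}^{\otimes m}\text{-isotypic piece})$ matches exactly the multiplicities $[\mu : Q_\lambda\text{-stuff}]$ predicted by the horizontal-strip condition $\lambda/\mu \in HS$. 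Comparing with Proposition \ref{prop:infty_injective_obj} (items on $\Hom(Q_\lambda, Q_\mu)$ and the socle/JH structure of $Q_\lambda$, and the fact that $Q_\lambda$ appears in $\fh_{\infty}^{\otimes|\lambda|}$), one concludes $M_\lambda \cong Q_{\lambda}$ — the indecomposability of $M_\lambda$ following because $\End_{S_{\infty}}(M_\lambda) = \C$, which drops out of the same $\Hom$-computation with $m = k$ (the only horizontal strip $\lambda/\lambda$ is empty).

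A cleaner alternative, which I would actually prefer to write, is a dimension/degree argument: each simple $L_\mu$ lives in a fixed ``degree'' in the sense that $Q_\mu$ (hence $L_\mu$) first appears in $\fh_{\infty}^{\otimes |\mu|}$, while $\Delta_k^{\infty}$ as a summand of $\fh_{\infty}^{\otimes k}$ has all its injective summands $Q_\mu$ with $|\mu| \leq k$. The $S_k$-isotypic piece for $\lambda \vdash k$ forces the ``leading term'' to be $Q_\lambda$ (size exactly $k$), and injectivity plus the $\End = \C$ computation force equality $M_\lambda = Q_\lambda$ with no lower-order corrections. The main obstacle is the bookkeeping of which $S_{\infty}$-representation sits in which $S_k$-isotypic component: this is precisely the place where one must invoke Schur–Weyl-type reciprocity between the $S_k$-action on $[k]$ and the $S_{\infty}$-action on $[\infty]$ inside $\C\Inj([k],[\infty])$, and match it against the combinatorial description of injectives in Proposition \ref{prop:infty_injective_obj}. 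Once the $\End_{S_{\infty}}(\Delta_k^{\infty}) \cong \Delta_k^k \cong \bigoplus_{\lambda \vdash k} \lambda \otimes \lambda$ identity is unpacked $S_k \times S_k$-equivariantly, the endomorphism algebra of each isotypic piece is seen to be $\C$, and the lemma follows.
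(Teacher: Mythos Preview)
Your approach is genuinely different from the paper's. The paper's proof is essentially one line of $\Mod_A$/${\bf FI}$-module bookkeeping: the free module $A \otimes \C[S_k]$ (equivalently $A \otimes V^{\otimes k}$ under $\Phi$) maps to $\Delta_k^\infty$ under the quotient functor to $\Rep(S_\infty)$, and decomposing $\C[S_k] \cong \bigoplus_{\lambda \vdash k} \lambda \otimes \lambda$ yields $\bigoplus_\lambda (A \otimes S^\lambda) \otimes \lambda$, whose image is $\bigoplus_\lambda Q_\lambda \otimes \lambda$ by the very \emph{definition} $Q_\lambda = T(A \otimes S_\lambda)$ from Section~\ref{ssec:Mod_K}. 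No identification problem arises because $Q_\lambda$ is defined this way.

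Your route works intrinsically inside $\Rep(S_\infty)$, which is conceptually appealing, but the step ``compare with Proposition~\ref{prop:infty_injective_obj} and conclude $M_\lambda \cong Q_\lambda$'' is a real gap as written. You correctly show each $M_\lambda$ is an indecomposable injective (via $\End_{S_\infty}(M_\lambda) = \C$, extracted from $\End_{S_\infty}(\Delta_k^\infty) \cong \Delta_k^k = \C[S_k]$), so $M_\lambda \cong Q_{\tau}$ for \emph{some} $\tau$; the problem is pinning down $\tau = \lambda$. Your ``degree'' heuristic assumes that $Q_\mu$ does \emph{not} occur in $\fh^{\otimes m}$ for $m < |\mu|$, but this is not among the facts recorded in Proposition~\ref{prop:infty_injective_obj} and is essentially equivalent to what you want to prove. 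One way to close the gap: induct on $k$, use $\Hom_{S_\infty}(\Delta_m^\infty, \Delta_k^\infty) = 0$ for $m < k$ to force $|\tau| \geq k$; then observe that $\lambda \mapsto M_\lambda$ (over all partitions) is a bijection onto the indecomposable injectives (every injective is a summand of some $\fh^{\otimes m} = \bigoplus_{j \leq m} c_j \Delta_j^\infty$), and a size-nondecreasing bijection on partitions must be size-preserving; finally, the computation of $\Hom_{S_\infty}(\Delta_k^\infty, \Delta_{k-1}^\infty)$ shows that $\tau$ and $\lambda$ have the same set of subpartitions of size $k-1$, which uniquely determines a partition of size $k$. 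This can be made to work, but it is considerably more than a glance at Proposition~\ref{prop:infty_injective_obj}.
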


\begin{proof}
The quotient functor from ${\bf FI}$-modules to $S_{\infty}$-modules sends 
$A \otimes \mathbb{C}[S_k]$ to $\Delta^{\infty}_k$. Then $$ A \otimes \mathbb{C}[S_k] = \bigoplus_{\lambda \vdash k} (A 
\otimes \lambda) \otimes \lambda.$$ Applying the quotient functor ${\bf 
FI} \mbox{-} \Mod \to \Rep(S_{\infty})$ gives $ \Delta^{\infty}_k = \bigoplus_{\lambda \vdash k} Q_{\lambda} \otimes 
\lambda$, as required.
\end{proof}



\section{Deligne categories}\label{sec:Deligne}

\subsection{General description}\label{ssec:Deligne_general}
For $t\in \C$ Deligne defined in \cite{Del07} a rigid symmetric monoidal Karoubi category 
$\uRep(S_t)$ 
with a distinguished object $\mathfrak{h}_t$ which is universal (see \cite{Del07, CO1} for a precise statement).


When $t=d\in \Z_{\geq 0}$ and $\mathfrak{h}_d$ the $d$-dimensional standard representation of 
$S_d$, we obtain a full symmetric monoidal functor $\mathcal{S}_d:\uRep(S_d)\to \Rep(S_d)$ taking $\mathfrak{h}_{t=d}$ to $\mathfrak{h}_d$. This functor is given as the quotient of $\uRep(S_d)$ by the tensor ideal of negligible 
morphisms. The category $\uRep(S_t)$ is a Karoubian category; it is abelian iff 
$t \notin \Z_{\geq 0}$. For $t \in \Z_{\geq 0}$ $\uRep(S_t)$ embeds as a full 
subcategory into an abelian rigid SM category satisfying a universal 
property (\cite{CO1}, \cite{Del07}); this category is called the abelian envelope of $\uRep(S_t)$, and denoted by 
$\uRep^{ab}(S_t)$. We describe its abelian structure in Section \ref{ssec:Deligne_abelian_struct}.

We recall very briefly the construction of $\uRep(S_t)$ (\cite{CO1}, 
\cite{Del07}). The objects in the skeletal Deligne category $\uRep_0(S_t)$ are 
indexed by the non-negative integers. The corresponding object is denoted 
$[n]$. The morphism space $\Hom([n],[m])$ has a basis $
P_{n,m}$ parametrized by partitions of the set $[n] \sqcup [m]$, with the composition of two morphisms $\phi \in P_{n,m}$, $\psi \in P_{m, k}$ given by the induced partition of $[n] \sqcup [k]$ multiplied by a power of $t$. Taking disjoint unions of sets endows $\uRep_0(S_t)$ with the 
structure of a rigid SM category with unit object $[\varnothing]$. The category 
$\uRep_0(S_t)$ has a distinguished object denoted $\fh_t$ of dimension $t$ 
corresponding to the set $[1]$. This object is the analogue of the permutation 
representation of a symmetric group. We define $\uRep(S_t)$ as the idempotent 
completion of the additive envelope of $\uRep_0(S_t)$.

The isomorphism classes of indecomposable elements in $\uRep(S_t)$ are 
parametrized by partitions (or Young diagrams of arbitrary size). For a 
partition $\lambda$ we denote by $\X_{\lambda}$ the corresponding indecomposable 
element. 
\begin{example}
The unit object $\X_{\emptyset} = \triv$ is indecomposable. The object $\fh_t$ 
is decomposable for $t \neq 0$, and decomposes into $$\fh_t = \X_{\yng(1)} 
\oplus \X_{\emptyset}$$ When $t= 0$, the object $\mathfrak{h}_0$ is indecomposable: $\mathfrak{h}_0 = 
\X_{\yng(1)}$.

The object $\X_{\yng(1)}$ is the analogue of the $S_n$-reflection representation 
$\C\{e_i - e_j\}_{1 \leq i \neq j \leq n}$, and has dimension $t-1$ for $t 
\neq 0$, and dimension $0$ for $t=0$.
\end{example}

This is an example of the more general philosophy, which allows one to 
intuitively treat the indecomposable objects of $\uRep(S_t)$ as if 
they were parametrized by ``Young diagrams with a very long top row''. The 
indecomposable object $\X_{\lambda}$ is treated as if it would correspond to 
$\lambda[t]$, i.e. a Young diagram obtained by adding a very long top 
row (``of size $t -\abs{\lambda}$''). This point of view is useful to 
understand how to extend constructions for $S_n$ involving Young diagrams to 
$\uRep(S_t)$.

\begin{example}
\begin{enumerate}
 \item The object $\X_{\emptyset} = \triv$ corresponds to a ``very long top row 
of length $t$'', similarly to the row diagram corresponding to the trivial 
representation of $S_n$. 
\item The object $\X_{\yng(1)}$ corresponds to a ``hook of size $t$ with leg 
$1$'', similarly to the hook diagram corresponding to the reflection 
representation of $S_n$.
\end{enumerate}

\end{example}
\subsection{Specialization functor}\label{ssec:spec_functor_Deligne}
The above intuitive approach has a formal base: whenever 
$\lambda_1+\abs{\lambda} \leq d$, the image of $\X_{\lambda}$ in $\Rep(S_d)$ 
under the functor $\mathcal{S}_d$ is the irreducible representation of $S_d$ 
corresponding to the Young diagram $\lambda[d]$. The rest of the 
indecomposable objects are sent to zero by the functor $\mathcal{S}_d$.

 %



\subsection{The abelian envelope \texorpdfstring{$\uRep^{ab}(S_t)$}{of the Deligne category}}\label{ssec:abelian_envelope}

The category $\uRep(S_t)$ is not abelian for $t \in \Z_+$. However, Deligne constructed an abelian tensor category $\uRep^{ab}(S_t)$ with an embedding $\uRep(S_t) \to \uRep^{ab}(S_t)$. Comes and Ostrik \cite[Theorem 1.2]{CO1} proved the following universal property:


%
%

\begin{theorem}\label{thrm:abelian_envelope}
{\em (cf. \cite[8.21.2]{Del07})} Let $\mathcal{C}$ be a pre-Tannakian category and $T\in \mathcal{C}$ be a dualisable Frobenius object of dimension $d \in \Z_+$. Let $F_T: \uRep(S_t) \to \mathcal{C}$ denote the corresponding tensor functor mapping $\mathfrak{h}_t$ to $T$. Then $F_T$ factors through one of the following:

{\em (a)} The functor $\uRep(S_{d})\to \Rep(S_d)$.

{\em (b)} The functor $\uRep(S_t) \to \uRep^{ab}(S_t)$ from above.
\end{theorem}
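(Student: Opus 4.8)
The statement to prove is Theorem \ref{thrm:abelian_envelope}, the universal property of the abelian envelope due to Comes--Ostrik.

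\medskip

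\textbf{Plan of proof.} The plan is to reduce everything to the universal property of the Karoubian Deligne category $\uRep(S_t)$ and the already-known structure of $\uRep^{ab}(S_t)$. Let $\mathcal{C}$ be pre-Tannakian and let $T\in\mathcal{C}$ be a dualisable Frobenius object of dimension $d\in\Z_{\geq 0}$. By the universal property of $\uRep(S_t)$ as the universal symmetric monoidal category on a dualisable Frobenius object of dimension $t$ (see \cite{Del07, CO1}), there is a symmetric monoidal functor $F_T:\uRep(S_d)\to\mathcal{C}$ sending $\fh_{t=d}$ to $T$, unique up to isomorphism; this is the functor in the statement. The task is then to show $F_T$ factors either through $\mathcal{S}_d:\uRep(S_d)\to\Rep(S_d)$ or through the embedding $\uRep(S_d)\hookrightarrow\uRep^{ab}(S_d)$, and that exactly one of these alternatives occurs depending on $T$.

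\medskip

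\textbf{Key steps.} First I would analyze the indecomposable object $\X_{(1^{d+1})}\in\uRep(S_d)$ (equivalently, the relevant ``determinant-like'' or ``cut-off'' idempotent), whose image under $\mathcal{S}_d$ is zero since $(1^{d+1})$ has $|\lambda|+\lambda_1 = d+2 > d$. The dichotomy is governed by whether $F_T(\X_{(1^{d+1})})$ vanishes in $\mathcal{C}$: in case (a) it must, and in case (b) it need not. More precisely, the negligible ideal $\mathcal{N}\subset\uRep(S_d)$ (the kernel of $\mathcal{S}_d$) is generated as a tensor ideal by a single idempotent $e$, and one checks that in any pre-Tannakian $\mathcal{C}$ receiving $F_T$, either $F_T(e)=0$ — forcing $F_T$ to kill all of $\mathcal{N}$ and hence factor through $\Rep(S_d)$ — or $F_T(e)\neq 0$. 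Second, in the latter case I would invoke the construction of the abelian envelope: $\uRep^{ab}(S_d)$ is built (as in \cite{CO1}) so that it carries a dualisable Frobenius object of dimension $d$ and is the ``largest'' pre-Tannakian category doing so without the negligibles dying, i.e. it represents the functor $\mathcal{C}\mapsto\{\text{dualisable Frobenius objects }T\in\mathcal{C}\text{ of dim }d\text{ with }F_T(e)\neq 0\}$ on pre-Tannakian categories. Equivalently, one uses that $\uRep^{ab}(S_d)$ is the abelian envelope in the sense of \cite{EGNO}-style universal properties: any additive symmetric monoidal functor from $\uRep(S_d)$ to an abelian (even just pre-Tannakian) category that is ``left-exact enough'' extends uniquely to $\uRep^{ab}(S_d)$. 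So $F_T$ extends to $\uRep^{ab}(S_d)\to\mathcal{C}$, giving case (b). Third, I would verify the two cases are mutually exclusive and exhaustive by the $F_T(e)=0$ versus $F_T(e)\neq 0$ trichotomy being an actual dichotomy (there is no third option in a pre-Tannakian category, where $e$ is an honest idempotent endomorphism of an object, so its image is either $0$ or a nonzero summand).

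\medskip

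\textbf{Main obstacle.} The hard part will be the extension step in case (b): showing that the tensor functor $F_T$ on the Karoubian category $\uRep(S_d)$ — which only knows about tilting-type objects — extends to the much larger abelian envelope $\uRep^{ab}(S_d)$. This requires the full strength of the Comes--Ostrik analysis: one needs to understand $\uRep^{ab}(S_d)$ concretely (as a category of modules over an appropriate algebra, or via its highest-weight structure with standard objects), identify $\uRep(S_d)$ inside it as the tilting subcategory, and then use a tilting-resolution argument to define the extension on arbitrary objects and check functoriality, exactness, and monoidality. In other words, the substantive content is exactly \cite[Theorem 1.2]{CO1}, and an honest proof would either reproduce that argument or cite it; here I would cite Comes--Ostrik for the construction and universal property of $\uRep^{ab}(S_t)$ and only supply the idempotent-dichotomy argument that selects between (a) and (b). A secondary technical point is ensuring the uniqueness clauses in both the universal property of $\uRep(S_t)$ and of the envelope match up, so that ``factors through'' is well-defined up to canonical isomorphism of monoidal functors; this is routine given the rigidity and the fact that $\End(\triv)=\C$ in all categories involved.
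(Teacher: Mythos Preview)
The paper does not prove this theorem at all: it is stated as background and attributed to Comes--Ostrik \cite[Theorem 1.2]{CO1} (cf.\ also \cite[8.21.2]{Del07}), with no argument given. Your proposal already recognises this in the ``Main obstacle'' paragraph, where you note that the substantive content is exactly \cite[Theorem 1.2]{CO1} and that an honest treatment would cite it; that is precisely what the paper does, so your instinct to defer to Comes--Ostrik rather than reproduce their argument matches the paper's handling.
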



\subsection{Blocks and abelian structure}\label{ssec:Deligne_abelian_struct}
As it was said before, the category $\uRep(S_t)$ is semisimple for $t \notin \Z_{\geq 0}$. We will now describe the abelian structure of $\uRep^{ab}(S_t)$ when $t \in \Z_{\geq 0}$.

Fix $t \in \Z_{\geq 0}$. The abelian category $\uRep^{ab}(S_t)$ splits into infinitely many semisimple blocks and finitely many non-semisimple blocks, the latter indexed by the number of Young diagrams of 
size $t$ 
(see 
\cite{CO1}). 

We give a short description of the highest weight structure of $\uRep^{ab}(S_t)$ following \cite{CO1, En}. The partially ordered set of weights is (\{Young diagrams\}, $\geq$) where $\lambda \geq \mu$ iff $\lambda = \lambda^{(i)}$ and $\mu = \lambda^{(j)}$ for some $i,j$ and $i \leq j$.

The non-semisimple blocks are parametrized by partitions $\lambda$ such that $\lambda[t]$ is a Young diagram. Given a non-trivial block corresponding to such a partition $\lambda$, the irreducible objects are parametrized by partitions $\lambda^{(i)}$, $i = \{0,1,\ldots\}$ \[ \lambda = \lambda^{(0)} \subset \lambda^{(1)} \subset \lambda^{(2)} \subset \ldots \] where 
\begin{align}\label{eq:block_Deligne}
 &\lambda^{(i+1)} / \lambda^{(i)} = \text{ strip in row } i+1 \text{ of length } \lambda_i - \lambda_{i+1} + 1 \text{ for } i>0 \\ & \lambda^{(1)} / \lambda^{(0)} = \text{ strip in row } 1 \text{ of length } t - |\lambda| - \lambda_1 + 1. 
\end{align} We say two partitions $\tau$, $\mu$ are equivalent $\tau \sim \mu$ iff $\tau = \lambda^{(i)}$ and $\mu = \lambda^{(j)}$ for some $\lambda \vdash t$ and $i,j \in \{0,1,\ldots\}$. 

\begin{example} \cite[Example 5.10]{CO} For $t = 3$ we have 3 partitions of size $3$: $(3), (2,1)$ and $(1,1,1)$. These are of the form $\lambda[3]$ for the 3 partitions $(0), (1)$ and $(1,1)$. Hence the irreducible elements $\mathbf{L}_{\lambda^{(0)}}$ in the 3 non-semisimple blocks of $\uRep^{ab}(S_3)$ are $\mathbf{L}_{(0)}$, $\mathbf{L}_{(1)}$ and $\mathbf{L}_{(1,1)}$. 
\end{example}

Recall from \cite{CPS} that we have three classes of distinguished objects in a highest weight category: the irreducible objects, the standard objects and the projective covers of the irreducible objects, each parametrized by the elements of the weight poset. In our case the weight poset is given by all Young diagrams, and we denote the irreducible module attached to $\lambda$ by $\mathbf{L}_{\lambda}$, the standard module by $\mathbf{M}_{\lambda}$ and the projective cover by $\mathbf{P}_{\lambda}$ (which is also the injective hull of $\mathbf{L}_{\lambda}$). 

The subcategory $\uRep(S_t) \subset \uRep^{ab}(S_t)$ is the full subcategory of tilting objects (an object is called tilting if it admits a filtration by standard and costandard object). 

%


\subsection{Objects \texorpdfstring{$\Delta^t_k$}{Delta} in the Deligne category}\label{ssec:Delta_obj_Deligne}
In this subsection we define the objects $\Delta^t_k$ in the category $\uRep(S_t)$, and list some of their properties. These objects are defined for any $k \in \Z_+$ and any $t \in \C$, and come with an action of the group $S_k$ on them.

By definition, $\Delta^t_k$ is the image of an idempotent $x_k \in \End_{\uRep(S_t)}(\mathfrak{h}_t^{\otimes k})$ (the latter is given explicitly in \cite[Section 2.1]{CO} and Section \ref{ssec:Deligne_SW}), and satisfies:

\begin{lemma}\label{lem:funct_F_n_Delta_k}
 We have an isomorphism of $S_k$-modules $\mathcal{S}_n(\Delta^t_k) \cong \Delta^n_k$.
\end{lemma}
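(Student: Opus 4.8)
The statement to prove is $\mathcal{S}_n(\Delta^t_k) \cong \Delta^n_k$ as $S_k$-modules, where $\Delta^t_k$ is the image of an explicit idempotent $x_k \in \End_{\uRep(S_t)}(\mathfrak{h}_t^{\otimes k})$ and $\Delta^n_k = \mathbb{C}\Inj([k],[n])$ is the induced $S_n\times S_k$-module from Section \ref{ssec:prelim_induced}. The plan is to track the idempotent $x_k$ through the specialization functor $\mathcal{S}_n: \uRep(S_t)\to\Rep(S_n)$ at $t=n$ and identify its image directly.

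First I would recall that $\mathcal{S}_n$ sends $\mathfrak{h}_t$ to the permutation representation $\mathfrak{h}_n = \C^n$, hence sends $\mathfrak{h}_t^{\otimes k}$ to $(\C^n)^{\otimes k}$, with the basis of $\End_{\uRep(S_t)}(\mathfrak{h}_t^{\otimes k})$ given by partitions of $[k]\sqcup[k]$ mapping (up to negligible morphisms killed by $\mathcal{S}_n$) to the corresponding partition-indexed endomorphisms of $(\C^n)^{\otimes k}$ described in Section \ref{ssec:cat_S_infty} and \cite[Section 2]{CO}. Then I would invoke the explicit description of the idempotent $x_k$ from \cite[Section 2.1]{CO}: it is the idempotent whose image inside $(\C^n)^{\otimes k}$ is exactly the span of the ``multiplicity-free'' tensors $e_{i_1}\otimes\cdots\otimes e_{i_k}$ with all $i_j$ distinct. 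Since a basis of $(\C^n)^{\otimes k}$ is indexed by $\Fun([k],[n])$ and $\mathcal{S}_n(x_k)$ is the projection onto the span of those basis vectors indexed by injective functions, we get $\mathcal{S}_n(\Delta^t_k) = x_k\cdot(\C^n)^{\otimes k} = \mathbb{C}\Inj([k],[n]) = \Delta^n_k$. The $S_k$-action on $\mathfrak{h}_t^{\otimes k}$ (permuting tensor factors) specializes to the $S_k$-action permuting tensor factors of $(\C^n)^{\otimes k}$, which under the basis identification is precisely precomposition with the $S_k$-action on $[k]$, i.e. the standard $S_k$-action on $\Delta^n_k = \mathbb{C}\Inj([k],[n])$ from Lemma \ref{lem:Delta_classical_decomp}; so the isomorphism is $S_k$-equivariant.

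The main obstacle I expect is bookkeeping rather than conceptual: one must be careful that $\mathcal{S}_n$ is only \emph{full} (not faithful), so the idempotent $x_k\in\End_{\uRep(S_t)}(\mathfrak{h}_t^{\otimes k})$ need not map to a genuinely idempotent element before one passes to the negligible quotient --- but since $\mathcal{S}_n$ is a functor it does send idempotents to idempotents in $\Rep(S_n)$, and it suffices to identify the rank and the image of $\mathcal{S}_n(x_k)$. Concretely, one needs the explicit formula for $x_k$ (an alternating sum over the poset of set partitions of $[k]$, or equivalently the formula in \cite[Section 2.1]{CO} involving $\prod_j \frac{1}{t-j}$-type coefficients that become the inclusion--exclusion idempotent at $t=n$) and to check it acts on $(\C^n)^{\otimes k}$ as the asserted projection. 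Alternatively, and perhaps more cleanly, one can avoid the explicit idempotent altogether: $\Delta^t_k$ is characterized inside $\uRep(S_t)$ (for $t\notin\Z_{\geq 0}$, hence ``generically'', hence by the interpolation/lifting formalism) by its decomposition $\Delta^t_k\cong\bigoplus_{\lambda\vdash k}\X_\lambda\otimes\lambda$ as an $S_k$-object matching Lemma \ref{lem:Delta_decomposition_infty}, and then applying $\mathcal{S}_n$ termwise using $\mathcal{S}_n(\X_\lambda)=\lambda[n]$ when $|\lambda|+\lambda_1\le n$ (and $0$ otherwise) recovers exactly the decomposition of $\Delta^n_k$ in Lemma \ref{lem:Delta_classical_decomp}; the only subtlety there is justifying that the $S_k$-equivariant decomposition of $\Delta^t_k$ holds at all $t$ including $t=n$, which follows since $\Delta^t_k$ is a direct summand of $\mathfrak{h}_t^{\otimes k}$ cut out by an idempotent defined uniformly in $t$. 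I would present the first (direct) approach as the main argument and mention the second as a remark.
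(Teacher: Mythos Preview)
Your main approach is correct and matches the paper's treatment, which is even terser: the paper simply records that this ``is part of the definition of the functor $\mathcal{S}_n$ in \cite[Theorem 6.2]{Del07}, and follows from the fact that $\mathcal{S}_n(\mathfrak{h}_t^{\otimes k}) \cong \C Fun([k],[n])$.'' Your elaboration---identifying $(\C^n)^{\otimes k}$ with $\C Fun([k],[n])$ and checking that the idempotent $x_k$ projects onto the injective functions---is exactly the natural unpacking of this one-line remark, and the paper itself carries out this inclusion--exclusion computation of $x_k$ later in the proof of Lemma~\ref{lem:Delta_objects_corresp}.

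Two corrections to side remarks. First, the idempotent $x_k$ from \cite[Section 2.1]{CO} is defined by inclusion--exclusion over the poset of set partitions of $[k]$ and has \emph{integer} coefficients; there are no $\prod_j \frac{1}{t-j}$ factors (those arise for the finer primitive idempotents cutting out the $\X_\lambda$, not for $x_k$). Second, and more importantly, your alternative approach should be discarded: the decomposition $\Delta^t_k \cong \bigoplus_{\lambda \vdash k} \X_\lambda \otimes \lambda$ is false even for generic $t$. The correct decomposition (Lemma~\ref{lem:Delta_t_decomp}) is $\Delta^t_k \cong \bigoplus_{\mu \vdash k} \bigoplus_{\tau \in B^t_\mu} \mu \otimes \X_\tau$, where for generic $t$ the set $B^t_\mu$ consists of all $\tau \subset \mu$ with $\mu/\tau \in HS$---typically several $\tau$'s for each $\mu$. (For a quick sanity check: $\Delta^t_1 = \mathfrak{h}_t = \X_\emptyset \oplus \X_{(1)}$, not $\X_{(1)}$ alone.) So the termwise specialization argument does not go through as written.
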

This is part of the definition of the functor $\mathcal{S}_n$ in \cite[Theorem 6.2]{Del07}, and follows from the fact that $\mathcal{S}_n( \mathfrak{h}_t^{\otimes k}  ) \cong \C Fun([k] , [n])$.

\begin{example}
 $\Delta^t_0 \cong \mathbf{1}$ (unit object in monoidal category $\uRep(S_t)$), $\Delta^t_1 \cong \mathfrak{h}_t$.
\end{example}

\begin{remark}
The objects $\{\Delta^t_k\}_{k\geq 0}$ generate the category $\uRep(S_t)$ as a Karoubian additive category (similarly to $\{\mathfrak{h}_t^{\otimes k} \}_{k \geq 0}$).
%
%
%
\end{remark}

We now describe the $\Hom$-spaces between the objects $\Delta^t_k$. We start by introducing the following notation (cf. \cite[Section 4.3]{En}):

\begin{notation}\label{notn:Delta_maps_Deligne}
\mbox{}
\begin{itemize}[leftmargin=*]
\item We define a {\it partial pairing} $\pi$ of a pair of sets $S, R$ to be a collection $\{ \pi_i \}_{i \in I}, \pi_i \subset S \sqcup R$, such that $\pi_i \cap \pi_j = \emptyset$ if $i \neq j$,$\abs{\pi_i \cap S}, \abs{\pi_i \cap R} \leq 1$ for any $i$, and $\bigcup_{i \in I} \pi_i =S \sqcup R$. The subsets $\pi_i$ will be called {\it parts} of $\pi$. The number of parts of $\pi$ will be denoted by $l(\pi)$. If $r \in R, s\in S$ lie in the same part of $\pi$, we say that $\pi$ {\it pairs} $r$ and $s$.

 \item We denote by $\bar{P}_{r,s}$ the set of partial pairings of the sets $[r], [s]$. In other words, $\bar{P}_{r,s}$ is the set of all equivalence relations $\pi$ on the set $[r] \sqcup [s]$ such that $i, j$ do not lie in the equivalence class whenever $i \neq j \in [r]$, and similarly for $i' \neq j' \in [s]$. 
 \item The following diagrammatic notation will be used for elements of $\bar{P}_{r,s}$: let $\pi \in \bar{P}_{r,s}$. We will represent $\pi$ by a bipartite graph with two rows of aligned vertices: the top row contains $r$ vertices labeled by numbers $1,\ldots,r$, and the bottom row contains $s$ vertices labeled by numbers $1',\ldots,s'$, and there is an edge between $i, j'$ if $i, j'$ are paired by $\pi$.

\end{itemize}
\end{notation}

The following statement describes $\Hom$-spaces between objects $\Delta_r^t, \Delta_s^t$ (cf. \cite[Definition 3.12]{Del07}).

\begin{lemma}\label{lem:Delta_k_homs}
 Let $r, s \geq 1$. The space $\Hom_{\uRep(S_t)} (\Delta^t_r, \Delta^t_s) $ is $\C \bar{P}_{r,s}$, and the composition of morphisms between the objects $\Delta^t_k$ is given as follows: for $\pi \in \bar{P}_{r,s}, \rho \in \bar{P}_{s,t}$, consider the induced partial pairing $ \rho \star \pi \in  \bar{P}_{r,t}$. Then  $\rho \circ \pi$ is a linear combination of partitions $\tau$ which refine $ \rho \star \pi$, and the coefficients are polynomials in $t$ with integer coefficients.

\end{lemma}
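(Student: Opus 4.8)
The plan is to compute everything inside $\Hom_{\uRep(S_t)}(\fh_t^{\otimes r},\fh_t^{\otimes s})$, whose structure is known: it has basis $P_{r,s}$ (set partitions of $[r]\sqcup[s]$), with composition given by the induced partition times a power of $t$. Since $\Delta_k^t$ is the image of the idempotent $x_k\in\End_{\uRep(S_t)}(\fh_t^{\otimes k})$ from \cite[Section 2.1]{CO}, one has $\Hom_{\uRep(S_t)}(\Delta_r^t,\Delta_s^t)\cong x_s\circ\Hom_{\uRep(S_t)}(\fh_t^{\otimes r},\fh_t^{\otimes s})\circ x_r$; in particular this space is spanned by the morphisms $\bar\pi := x_s\circ[\pi]\circ x_r$, $\pi\in P_{r,s}$, where $[\pi]$ is the morphism $\fh_t^{\otimes r}\to\fh_t^{\otimes s}$ attached to $\pi$.

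First I would cut the spanning set down to partial pairings. The idempotent $x_k$ is, up to an inclusion-exclusion over partitions of $[k]$, the projector onto the ``injective part'' of $\fh_t^{\otimes k}$: by construction it annihilates everything that factors through a merging map $\fh_t^{\otimes(k-1)}\to\fh_t^{\otimes k}$. Hence if $\pi\in P_{r,s}$ has a block containing two elements of $[r]$, then $[\pi]$ factors through such a merging map on the source, so $[\pi]\circ x_r=0$; dually, if $\pi$ has a block with two elements of $[s]$, then $x_s\circ[\pi]=0$. Therefore $\Hom_{\uRep(S_t)}(\Delta_r^t,\Delta_s^t)$ is spanned by $\{\bar\pi:\pi\in\bar P_{r,s}\}$, a set of $|\bar P_{r,s}|$ elements.

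Next I would show these $\bar\pi$, $\pi\in\bar P_{r,s}$, are linearly independent, so that $\pi\mapsto\bar\pi$ identifies $\C\bar P_{r,s}$ with $\Hom_{\uRep(S_t)}(\Delta_r^t,\Delta_s^t)$; equivalently, $\dim\Hom_{\uRep(S_t)}(\Delta_r^t,\Delta_s^t)=|\bar P_{r,s}|$. Since the dimension of these Hom spaces does not depend on $t$ (part of the interpolation property of the family $\uRep(S_t)$, see \cite{CO,Del07}), it is enough to compute it for one convenient value. For $t=n\gg 0$, Lemma \ref{lem:funct_F_n_Delta_k} gives $\mathcal S_n(\Delta_k^t)\cong\C\Inj([k],[n])\cong\C[S_n/S_{n-k}]$, so, using fullness of $\mathcal S_n$ together with the spanning bound above, $\dim\Hom_{\uRep(S_n)}(\Delta_r^n,\Delta_s^n)$ is squeezed between $\dim\Hom_{S_n}(\C[S_n/S_{n-r}],\C[S_n/S_{n-s}])=\#\big(S_{n-s}\backslash S_n/S_{n-r}\big)$ and $|\bar P_{r,s}|$; a contingency-table count shows that the number of double cosets equals the number of partial matchings between $[r]$ and $[s]$, i.e.\ $|\bar P_{r,s}|$, as soon as $n\ge r+s$. (Alternatively, one uses that $\uRep(S_t)$ is semisimple for generic $t$ and reads off the dimension from the decomposition of $\Delta_k^t$ into indecomposables.)

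Finally, composition; here I would first rename the lemma's third index from $t$ to $u$ to avoid collision with the parameter. For $\pi\in\bar P_{r,s}$ and $\rho\in\bar P_{s,u}$ we have $\rho\circ\pi=x_u\circ[\rho]\circ x_s\circ[\pi]\circ x_r$. Expanding the central $x_s$ as its inclusion-exclusion combination of diagrams, composing the resulting diagrams in $\uRep(S_t)$ (each composition being the induced partition times $t$ to the power of the number of blocks living entirely in the middle copy of $[s]$), and then applying $x_u$ and $x_r$ (which, by the vanishing established above, kill any partition with a repeated vertex on the $[u]$ side or the $[r]$ side), produces a linear combination of partial pairings $\tau\in\bar P_{r,u}$, each a refinement of $\rho\star\pi$. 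The hard part will be the last clause, that the coefficients lie in $\Z[t]$: the coefficients occurring in $x_k$ are only rational functions of $t$, so integrality of the structure constants has to be extracted from the existence of a $\Z[t]$-integral form of the subcategory generated by the $\Delta_k^t$ (as in \cite{CO}; see also the diagram calculus of \cite[Section 4.3]{En}). I expect that point, together with the vanishing statement for $x_r,x_s$, to carry the real weight of the proof; the rest is diagram bookkeeping.
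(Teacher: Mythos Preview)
The paper does not prove this lemma; it is stated with a pointer to \cite[Definition~3.12]{Del07}, where Deligne introduces the basis for $\Hom(\Delta_r^t,\Delta_s^t)$ directly, and to \cite[Section~4.3]{En} for examples of the composition law. So there is no in-paper argument to compare against, and I comment instead on the soundness of your approach.

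Your spanning and dimension arguments are fine. One small misconception: the idempotent $x_k$ does \emph{not} have coefficients that are rational functions of $t$. From the recursion $x_R=\pi_R-\sum_{R'\gneqq R}x_{R'}$ used in the proof of Lemma~\ref{lem:Delta_objects_corresp}, each $x_R$ is an integer combination of the diagrams $\pi_{R'}$, independent of $t$. So the integrality clause is less delicate than you suggest.

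The real gap is elsewhere: your basis $\bar\pi:=x_s\circ[\pi]\circ x_r$ does \emph{not} satisfy the refinement clause of the lemma. Take $r=u=1$, $s=2$; let $\pi\in\bar P_{1,2}$ pair $1\in[r]$ with $1\in[s]$ (leaving $2\in[s]$ unmatched), and let $\rho\in\bar P_{2,1}$ pair $2\in[s]$ with $1\in[u]$ (leaving $1\in[s]$ unmatched). Then $\rho\star\pi$ is the empty pairing $\tau_0=\{1\}\{1'\}$, whose only refinement is itself. Since $x_1=\id$ and $x_2=\id-\pi_{\{1,2\}}$, a direct diagram computation gives
\[
\bar\rho\circ\bar\pi \;=\; [\rho]\circ[\pi]\;-\;[\rho]\circ\pi_{\{1,2\}}\circ[\pi]\;=\;\{1\}\{1'\}\;-\;\{1,1'\}\;=\;\bar\tau_0-\bar\tau_1,
\]
so the full pairing $\tau_1=\{1,1'\}$, which is strictly \emph{coarser} than $\rho\star\pi$, appears with nonzero coefficient. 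Thus your claim that ``each $\tau$ is a refinement of $\rho\star\pi$'' fails for the basis $\{\bar\pi\}$.

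What fixes this is passing to Deligne's basis (equivalently, the restriction of the orbit basis $x_\pi$ of \cite[\S2]{CO} to $\pi\in\bar P_{r,s}$), which is related to your $\bar\pi$ by an integer unitriangular change of basis along the coarsening order; in the example above one checks $\bar\tau_0-\bar\tau_1=x_{\tau_0}$, and it is in this basis that the refinement statement holds. So to prove the lemma as stated you must either work with Deligne's basis from the outset or carry out this change of basis explicitly; the remainder of your plan goes through once that is done.
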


We refer to \cite[Section 4.3]{En} for detailed examples of compositions of two morphisms between $\Delta$-objects.

The following special morphisms between the objects $\Delta^t_k$ will be used in this paper.

Let $r\geq 0$, $k \geq 1$, $ 1 \leq l \leq k$.
\begin{definition}\label{def:res_morphisms_Deligne}
Denote by $\res^l$ the morphism $ \Delta^t_{k+1} \rightarrow \Delta^t_{k}$ given by the diagram
$$  \xymatrix{ 1 \ar@{-}[d] & 2 \ar@{-}[d] & 3 \ar@{-}[d] & \ldots & l-1 \ar@{-}[d] & l  & l+1 \ar@{-}[dl] & l+2 \ar@{-}[dl] & \ldots & k+1 \ar@{-}[dl] \\ 1 & 2 & 3 & \ldots & l-1 & l & l+1 & \ldots & k }$$

%
\end{definition}

\begin{remark}\label{rmrk:res_l_image_under_S_n_functor}
Let $n \in \Z_+$. Fix $k, l$ such that $1 \leq k \leq n-1,  1 \leq l \leq k$. Then $$\res^l = \mathcal{S}_n(\res^l): \, \Delta_k^n \rightarrow \Delta_{k-1}^n.$$
%
%
\end{remark}

%

Finally, we state a result which will be crucial in the proof of Theorem \ref{thrm:main}:
\begin{lemma}\label{lem:Delta_t_decomp}
 As an $S_k$-module in $\uRep(S_t)$, the object $\Delta_k^t$ decomposes as 
 $$\Delta_k^{t} \cong \bigoplus_{\mu \vdash k} \bigoplus_{\tau \in B^t_{\mu}}{\mu} \otimes  \X_\tau $$
 where $$B^t_{\mu} = \{ \tau \subset\mu \rvert \mu/ \tau \in HS, \; \text{ and 
either } \; \X_{\tau} \text{ is in a semisimple block, or } \tau = \tau'^{(i)}, \; 
\tau'^{(i+1)} \not\subset \mu\}$$ 
\end{lemma}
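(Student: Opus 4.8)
The starting point is the decomposition of $\Delta_k^t$ as an $(S_k \times \uRep(S_t))$-object. Since $\uRep(S_t)$ is not semisimple when $t \in \Z_{\geq 0}$, one cannot simply decompose into irreducibles; instead the natural target is a decomposition into $\mu \otimes (\text{indecomposable } \X_\tau)$. My plan is to compute this by first working rationally in $t$ (i.e.\ over the generic Deligne category, or equivalently using the lifting map on Grothendieck rings) and then specializing. In the generic/semisimple case, Lemma \ref{lem:Delta_classical_decomp} together with the ``interpolation'' of the classical decomposition $\Delta_k^n \cong \bigoplus_{\mu \vdash n,\lambda \vdash k,\, \mu/\lambda \in HS} \mu \otimes \lambda$ gives, after reindexing the $S_n$-diagrams $\mu$ by the corresponding $\uRep$-labels (a partition $\mu$ of size $n$ with $\mu/\lambda \in HS$ corresponds, for $n \gg 0$, to $\X_\tau$ where $\tau$ is obtained by deleting the long top row of $\mu$, so $\tau \subset \lambda$ and $\lambda/\tau \in HS$), the ``naive'' answer $\Delta_k^t \cong \bigoplus_{\mu \vdash k}\bigoplus_{\tau \subset \mu,\ \mu/\tau \in HS} \mu \otimes \X_\tau$. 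So the content of the lemma is precisely that at special $t$ some of these $\X_\tau$ \emph{drop out} — namely those $\tau$ which, inside a non-semisimple block, sit at position $\tau = \tau'^{(i)}$ with $\tau'^{(i+1)} \not\subset \mu$.

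The mechanism for this is the key idempotent $x_k \in \End_{\uRep(S_t)}(\fh_t^{\otimes k})$ cutting out $\Delta_k^t$: one should argue that the multiplicity spaces are governed by the \emph{block combinatorics} of \ref{ssec:Deligne_abelian_struct}. Concretely, I would proceed as follows. First, establish the semisimple/generic decomposition above (valid for $t \notin \Z_{\geq 0}$, and for all $t$ this gives an equality in the Grothendieck group after applying $\mathcal{S}_n$ and using Lemma \ref{lem:funct_F_n_Delta_k} and Remark \ref{rmrk:res_l_image_under_S_n_functor}). Second, for $t = n_0 \in \Z_{\geq 0}$, note that $\Delta_k^t$ is a \emph{tilting} object (it is a summand of $\fh_t^{\otimes k}$, which is tilting since $\uRep(S_t)$ is the category of tiltings), hence admits a filtration by standards $\mathbf{M}_\tau$; so the decomposition into indecomposables is controlled by which indecomposable tiltings $\X_\tau$ appear, and for tilting objects the indecomposable summands $\X_\tau$ in a non-semisimple block are \emph{not} in bijection with the standard-filtration multiplicities — rather, $\X_\tau$ has a standard filtration with $\mathbf{M}_\tau$ and $\mathbf{M}_{\tau'}$ where $\tau'$ is the predecessor of $\tau$ in the block ordering (the tilting $\X_\tau$ ``glues'' two consecutive standards). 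Matching the Grothendieck-group identity (multiplicities of standards $\mathbf{M}_\tau$, which ARE just $\{\tau \subset \mu: \mu/\tau \in HS\}$ each with multiplicity one) against the indecomposable-tilting decomposition then forces the stated combinatorial condition: $\X_\tau$ appears iff either $\tau$ is in a semisimple block, or $\tau$ is the ``lower'' member of a gluing pair that actually occurs, which is exactly the condition $\tau = \tau'^{(i)}$ with $\tau'^{(i+1)} \not\subset \mu$ so that $\X_\tau$'s standard filtration $[\mathbf{M}_{\tau'^{(i)}}] + [\mathbf{M}_{\tau'^{(i+1)}}]$ does not overshoot the available multiplicities.

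The main obstacle, and the step requiring the most care, is the last one: translating the Grothendieck-group identity for $[\Delta_k^t]$ into an actual direct-sum decomposition into indecomposables, and checking that the ``greedy matching'' of standard-filtration multiplicities against indecomposable tiltings is forced and well-defined. This needs (i) the structure of indecomposable tiltings in $\uRep^{ab}(S_t)$ from \cite{CO1} (each non-projective indecomposable tilting in a non-semisimple block is a nonsplit extension spanning two adjacent standards in the block chain $\lambda^{(0)} \subset \lambda^{(1)} \subset \cdots$), and (ii) an argument that a tilting object with a given character decomposes uniquely, combined with a downward induction along the block chain showing the matching is forced. One clean way to organize (ii) is: the standards appearing in $\Delta_k^t$ within a fixed block form an ``interval'' $\{\tau'^{(a)}, \ldots, \tau'^{(b)}\}$ of the chain determined by the horizontal-strip condition on $\mu$ (one must check this interval structure — it follows from $\{\tau \subset \mu : \mu/\tau \in HS\} \cap (\text{block})$ being an initial segment of the chain, which is a direct Young-diagram check), and an object with standard factors exactly $\mathbf{M}_{\tau'^{(a)}}, \ldots, \mathbf{M}_{\tau'^{(b)}}$ each once, which is tilting, must be $\X_{\tau'^{(a)}} \oplus \X_{\tau'^{(a+2)}} \oplus \cdots$ — i.e.\ the summands are indexed by the ``even-position'' endpoints, and $\X_{\tau'^{(c)}}$ occurs iff $\tau'^{(c)}$ is in the interval and $\tau'^{(c+1)}$ is \emph{not}, or $c = b$; rewriting this last condition as $\tau'^{(c+1)} \not\subset \mu$ gives exactly $B^t_\mu$. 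I would also remark that the $\res^l$-morphisms and their $\mathcal{S}_n$-images (Remark \ref{rmrk:res_l_image_under_S_n_functor}) can be used as an alternative, more hands-on route to pin down the idempotent decomposition if the Grothendieck-group argument needs supplementing, but the block-combinatorics route above is the conceptual one.
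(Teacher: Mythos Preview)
Your overall strategy---compute the standard-filtration multiplicities from the generic decomposition and then match against indecomposable tiltings block by block---is correct and is essentially what underlies the result. The paper's own proof is just a citation to \cite[Lemma 7.0.10]{En} together with Remark~\ref{rmk:Delta_t_decomp}, so the substantive work happens in \cite{En}; your outline is a reasonable reconstruction of that work. However, there is a genuine gap and an error in your execution.

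The gap is the combinatorial input. You assert that $\{i : \tau'^{(i)} \subset \mu,\ \mu/\tau'^{(i)} \in HS\}$ is an ``initial segment'' of the block chain; this is false in general (it need not contain $\tau'^{(0)}$), and merely calling it an ``interval'' of unspecified length is too weak. What you actually need, and what Remark~\ref{rmk:Delta_t_decomp} supplies (via \cite[Lemma 4.2.2]{En}), is that this set has \emph{at most two elements}, necessarily consecutive. Without this bound your argument fails: an interval of length $\geq 3$ would force more than one indecomposable tilting summand in the block, whereas $B^t_\mu$ by definition contains at most one $\tau'^{(i)}$ per non-semisimple block (the unique $i$ with $\tau'^{(i)} \subset \mu$ and $\tau'^{(i+1)} \not\subset \mu$), so the statement of the lemma would be wrong.

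The error is your matching formula. Since $\X_{\tau'^{(j)}}$ for $j \geq 1$ has standard factors $\mathbf{M}_{\tau'^{(j)}}$ and $\mathbf{M}_{\tau'^{(j-1)}}$ (the \emph{predecessor}, not the successor), the greedy matching runs from the top of the interval downward, giving $\X_{\tau'^{(b)}} \oplus \X_{\tau'^{(b-2)}} \oplus \cdots$, not $\X_{\tau'^{(a)}} \oplus \X_{\tau'^{(a+2)}} \oplus \cdots$ as you wrote. Once the at-most-two bound is in hand the matching is trivial: the interval is either $\{i-1, i\}$ or $\{0\}$ (a singleton $\{a\}$ with $a \geq 1$ is impossible, since $\mathbf{M}_{\tau'^{(a)}}$ alone is not tilting), and the unique summand is $\X_{\tau'^{(i)}}$ for the top index $i$. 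That this $i$ also satisfies $\tau'^{(i+1)} \not\subset \mu$ (and not merely ``$\tau'^{(i+1)}$ fails the HS condition'') is exactly the Young-diagram computation carried out in the proof of Lemma~\ref{lem:standard-components}, which then recovers $B^t_\mu$.
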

\begin{remark}\label{rmk:Delta_t_decomp}
 By \cite[Lemma 4.2.2]{En}, in each non-semisimple block of $\uRep^{ab}(S_{t})$ corresponding to a partition $\tau' \vdash t$, there exist at most two elements $\tau$ of the sequence of Young diagrams $\{\tau'^{(i)}\}_{i \geq 0}$ which satisfy: $ \tau \subset\mu$, $\mu/ \tau \in HS$. Furthermore, if there are two such elements, they are necessarily consecutive. 

Consider the equivalence relation of the set of Young diagrams given by the subdivision of $\uRep^{ab}(S_{t})$ into blocks (called the $\stackrel{t}{\sim}$ equivalence relation in \cite{CO, En}). Then $B^t_{\mu}$ contains at most one element from each equivalence class.
\end{remark}

\begin{proof}
 The statement of the Lemma is a direct consequence of the above remark and \cite[Lemma 7.0.10]{En} (cf. also \cite[Prop. 7.7]{Del07}).
\end{proof}

The object $\Delta_k^t$ is a tilting and thus standardly filtered object in $\uRep(S_t)$, so it also makes sense to understand its standard subquotients. We will use the following lemma:
\begin{lemma}\label{lem:standard-components}
 Let $\tau \in B^t_{\mu}$. Then the standard components of $\mathbf{X}_{\tau}$ are $\mathbf{M}_{\lam}$ such that $\lam\subset \mu$, $\mu /\lam \in HS$ and $\mathbf{L}_{\tau}, \mathbf{L}_{\lam}$ lie in the same block.
\end{lemma}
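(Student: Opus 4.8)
The statement is about the standard filtration of a single indecomposable tilting object $\mathbf{X}_\tau$ in the highest-weight category $\uRep^{ab}(S_t)$, with $\tau \in B^t_\mu$. The plan is to compare two filtrations of $\Delta_k^t$: the one coming from its decomposition as an $S_k$-module (Lemma \ref{lem:Delta_t_decomp}), which expresses $\Delta_k^t$ as a sum of $\mathbf{X}_\tau$'s with multiplicity spaces $\mu$, and a standard filtration of $\Delta_k^t$ obtained directly. For the latter, recall that $\Delta_k^t$ is the image of the idempotent $x_k$ on $\fh_t^{\otimes k}$, and $\fh_t^{\otimes k}$ is tilting with an explicit standard filtration; alternatively one uses Lemma \ref{lem:Delta_classical_decomp} as a ``blueprint'': over $\Rep(S_n)$ for $n \gg 0$ we have $\Delta_k^n \cong \bigoplus_{\lambda \subset \mu,\, \mu/\lambda \in HS} \mu \otimes \lambda$, and the weights $\lambda[n]$ appearing here should be exactly the labels of the standard subquotients of $\Delta_k^t$, each with multiplicity space the $S_k$-module $\lambda \vdash k$. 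So I expect:
\[
\Delta_k^t \ \text{has a standard filtration with subquotients}\ \bigoplus_{\substack{\lambda \vdash k,\ \lambda \subset \mu \\ \mu/\lambda \in HS}} \lambda \otimes \mathbf{M}_{\lambda}.
\]

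The key steps, in order: (1) Establish the displayed standard filtration of $\Delta_k^t$. This follows because $\Delta_k^t$ is standardly filtered (it is tilting), and the class $[\Delta_k^t]$ in the Grothendieck group, together with the known behavior under $\mathcal{S}_n$ (Lemma \ref{lem:funct_F_n_Delta_k}) and the lifting map / the fact that standard objects $\mathbf{M}_\lambda$ specialize to $\lambda[n]$ for $n$ large, forces the multiplicities; one reads them off Lemma \ref{lem:Delta_classical_decomp}. One must be careful that the $\mathbf{M}_\lambda$ are distinct as weights even when several of them lie in the same block — but since $\lambda$ ranges over diagrams with $\lambda \subset \mu$ and $\mu/\lambda \in HS$, Remark \ref{rmk:Delta_t_decomp} tells us at most two such $\lambda$ sit in one block, and they are consecutive, so no cancellation in the Grothendieck group destroys information. (2) Combine with Lemma \ref{lem:Delta_t_decomp}: $\Delta_k^t = \bigoplus_{\nu \vdash k}\bigoplus_{\sigma \in B^t_\nu} \nu \otimes \mathbf{X}_\sigma$. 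Since standard filtrations are additive and the multiplicity of $\mathbf{M}_\lambda$ in $\mathbf{X}_\sigma$ is a non-negative integer (BGG-type reciprocity in a highest weight category), comparing multiplicity spaces of the $S_k$-action on both sides gives an identity of $S_k$-modules
\[
\bigoplus_{\nu \vdash k}\bigoplus_{\sigma \in B^t_\nu} \nu \otimes \big(\mathbf{X}_\sigma : \mathbf{M}_\lambda\big) \ \cong\ \begin{cases} \lambda & \lambda \subset \mu,\ \mu/\lambda \in HS,\ \text{here}\ k=|\lambda|,\ \mu\ \text{as below} \\ 0 & \text{else}\end{cases}
\]
Wait — this needs care: the decomposition in Lemma \ref{lem:Delta_t_decomp} runs over \emph{all} $\nu \vdash k$, and the one in step (1) also runs over all $\lambda\vdash k$ with a fixed $\mu$ in the background. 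The point is that $\mu$ is not fixed: $B^t_\nu$ depends on $\nu$, and to isolate a single $\mathbf{X}_\tau$ with $\tau \in B^t_\mu$ one restricts attention to the $\nu = \mu$ summand. The cleanest route: fix $\mu$, and note from Lemma \ref{lem:Delta_t_decomp} applied with $k = |\mu|$... — actually the right statement is that the left side of step (1) and step (2), as bifunctorial decompositions under $S_k \times (\text{Grothendieck group})$, must agree summand by summand, and multiplicity-freeness of the $S_k$-types together with $B^t_\nu$ containing at most one element per block (Remark \ref{rmk:Delta_t_decomp}) forces $(\mathbf{X}_\tau : \mathbf{M}_\lambda) \in \{0,1\}$ with $(\mathbf{X}_\tau : \mathbf{M}_\lambda) = 1$ iff $\lambda \subset \mu$, $\mu/\lambda \in HS$, $\tau \in B^t_\mu$, and $\mathbf{L}_\tau, \mathbf{L}_\lambda$ lie in the same block.

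(3) The constraint ``$\mathbf{L}_\tau,\mathbf{L}_\lambda$ lie in the same block'' drops out automatically: standard objects in one block have all composition factors in that block, and $\mathbf{X}_\tau$ being indecomposable lies in a single block, so every $\mathbf{M}_\lambda$ occurring in it has $\mathbf{L}_\lambda$ in the block of $\tau$. Conversely, among the $\lambda$ with $\lambda \subset \mu$, $\mu/\lambda \in HS$, exactly those in the block of $\tau$ can occur, and the multiplicity-counting of step (2) shows they all do occur (exactly once) — this is where I use that the $\mu$-summand of $\Delta_k^t$ in Lemma \ref{lem:Delta_t_decomp} is $\bigoplus_{\sigma \in B^t_\mu} \mu \otimes \mathbf{X}_\sigma$, and the $\sigma$'s range over one representative per relevant block, matching the partition of the set $\{\lambda : \lambda \subset\mu, \mu/\lambda\in HS\}$ into blocks.

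**Main obstacle.** The real work is step (1): proving that $\Delta_k^t$ carries a standard filtration with precisely the multiplicities predicted by the classical formula, and in particular that there is no ``collapse'' when two admissible $\lambda$'s lie in the same block. I would handle this via the specialization functors to $\Rep(S_n)$ for $n \gg 0$: the functor $\mathcal{S}_n$ sends $\Delta_k^t \mapsto \Delta_k^n$ (Lemma \ref{lem:funct_F_n_Delta_k}), sends $\mathbf{M}_\lambda \mapsto \lambda[n]$ (for $n$ large enough that $\lambda[n]$ is defined), and is exact on standardly-filtered objects in the relevant range; comparing with Lemma \ref{lem:Delta_classical_decomp} pins down the multiplicities for $n \gg 0$, and since the standard multiplicities of $\Delta_k^t$ are independent of $t$ in this ``generic'' combinatorial sense (they are the fixed integers $[\lambda \subset \mu,\ \mu/\lambda \in HS]$), the identity holds for all $t$. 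An alternative, perhaps cleaner, is to invoke directly the known standard filtration of tilting objects from \cite{CO1, En} together with Lemma \ref{lem:Delta_t_decomp} and BGG reciprocity, which reduces everything to the block-combinatorics of Remark \ref{rmk:Delta_t_decomp}. Either way, the combinatorial bookkeeping — matching the set $\{\lambda : \lambda \subset \mu,\ \mu/\lambda \in HS\}$, partitioned into $\stackrel{t}{\sim}$-classes, against the index set $B^t_\mu$ — is the crux, and it is exactly the content encoded in Remark \ref{rmk:Delta_t_decomp}.
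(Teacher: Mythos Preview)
Your strategy is to reverse the paper's logic: rather than starting from the known standard filtration of each individual $\X_\tau$ and verifying the combinatorial conditions, you aim to first establish the global identity
\[
[\Hom_{S_k}(\mu, \Delta_k^t)] \;=\; \sum_{\substack{\lambda \subset \mu \\ \mu/\lambda \in HS}} [\mathbf{M}_\lambda]
\]
in the Grothendieck group (this is Corollary~\ref{cor:Delta_t_decomp_standard}, which the paper derives as a \emph{consequence} of the lemma), and then, using Lemma~\ref{lem:Delta_t_decomp} together with the fact that $B^t_\mu$ meets each block at most once (Remark~\ref{rmk:Delta_t_decomp}), read off the standard filtration of each $\X_\tau$ block by block. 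Steps~(2)--(3) of this argument are fine, so the approach works in principle.

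The gap is in step~(1). Your primary tool is the specialization functor $\mathcal{S}_n: \uRep(S_{t=n}) \to \Rep(S_n)$, but this functor only exists when $t = n$; for the fixed integer $t$ of interest there is no way to ``specialize to $n \gg 0$''. The lifting map you invoke goes the other way, from $K_0(\uRep(S_t))$ to $K_0(\uRep(S_T))$ for $T$ formal, and is defined on tilting classes by $[\X_\lambda]_t \mapsto [\X_\lambda]_T$; to know where it sends $[\mathbf{M}_\lambda]_t$ you already need the standard filtration of the $\X_\lambda$, which is circular. Your assertion that ``the standard multiplicities of $\Delta_k^t$ are independent of $t$'' is correct, but it is the conclusion rather than an input.

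The paper instead takes the direct route you list as your ``alternative, perhaps cleaner'' option. It cites \cite[Proposition~4.4.10]{En}: in a non-semisimple block $\{\tau'^{(i)}\}_{i \geq 0}$ the tilting object $\X_{\tau'^{(i)}}$ has standard factors $\mathbf{M}_{\tau'^{(i)}}$ and $\mathbf{M}_{\tau'^{(i-1)}}$ (just $\mathbf{M}_{\tau'^{(0)}}$ when $i=0$). The content of the proof is then a short combinatorial check: given $\tau = \tau'^{(i)} \in B^t_\mu$, one uses the explicit block description~\eqref{eq:block_Deligne} and the defining condition $\tau'^{(i+1)} \not\subset \mu$ to show that among all $\tau'^{(j)}$, exactly $j \in \{i-1, i\}$ satisfy $\tau'^{(j)} \subset \mu$ and $\mu/\tau'^{(j)} \in HS$. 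The key inequality is that $\tau'^{(i+1)} \not\subset \mu$ forces $\mu_{i+1} < \tau'^{(i+1)}_{i+1} = \tau'^{(i-1)}_i + 1$, which is precisely what makes $\mu/\tau'^{(i-1)}$ a horizontal strip.
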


\begin{proof}
  If $\X_{\tau}$ sits in a semisimple block, then $\tau \in B^t_{\mu}$ implies that $\tau\subset \mu$, $\mu /\tau \in HS$. On the other hand, $\X_{\tau} = \mathbf{M}_{\tau}$, so the statement of the lemma holds.

If $\X_{\tau}$ sits in a non-semisimple block corresponding to the sequence of Young diagrams $\{\tau'^{(i)}\}_{i \geq 0}$ ($\tau' \vdash t$), with $\tau = \tau'^{(i)}$, then (by definition) $\tau \in B^t_{\mu}$ implies that $\tau'^{(i)}\subset \mu$, $\mu /\tau'^{(i)} \in HS$ and $\tau'^{(i+1)} \not\subset \mu$.
Recall from Section \ref{ssec:Deligne_abelian_struct}, \eqref{eq:block_Deligne} that $\tau'^{(i)} / \tau'^{(i-1)}$ is a horizontal strip concentrated in the single row $i$, while $\tau'^{(i)} /\tau'^{(i-2)} \notin HS$. Thus $\mu /\tau'^{(j)} \notin HS$ for $j \neq i, i-1$, and $\mu /\tau'^{(i)} \in HS$. It remains to check whether $\mu /\tau'^{(i-1)} \in HS$. Indeed, $\mu /\tau'^{(i-1)} = \mu /\tau'^{(i)} \sqcup \tau'^{(i)} / \tau'^{(i-1)}$, so to check that this is a horizontal strip, we only need to check whether $\mu /\tau'^{(i)}$ (and hence $\mu$) has any boxes in row $i+1$, column number $\tau'^{(i-1)}_i +1$. That is, we need to check whether $\mu_{i+1} < \tau'^{(i-1)}_i +1$.

Indeed, $\tau'^{(i+1)} \not\subset \mu$, while $\tau'^{(i)}\subset \mu$. Since $\tau'^{(i)}, \tau'^{(i+1)}$ differ only by several boxes in row $i+1$, we conclude that 
$$\mu_{i+1} < \tau'^{(i+1)}_{i+1} = \tau'^{(i-1)}_i +1$$ (the last equality follows from \eqref{eq:block_Deligne}).

Hence  $\tau \in B^t_{\mu}$ implies that the conditions $\tau'^{(j)}\subset \mu$, $\mu /\tau'^{(j)} \in HS$ hold iff $j \in \{i-1, i\}$.

On the other hand, $\X_{\tau'^{(i)}}$ has standard components $ \mathbf{M}_{\tau'^{(i)}}, \mathbf{M}_{\tau'^{(i-1)}}$ if $i >1$ and $\mathbf{M}_{\tau'^{(0)}}$ if $i=0$ \cite[Proposition 4.4.10]{En}. Hence the statement of the lemma holds.
\end{proof}
The following is a direct consequence of Lemmas \ref{lem:Delta_t_decomp} and \ref{lem:standard-components}:

\begin{corollary}\label{cor:Delta_t_decomp_standard}
 The $S_k$-module $\Delta_k^t$ is a standardly filtered object in $\uRep^{ab}(S_t)$, and the standard components of $\Hom_{S_k}(\mu, \Delta_k^{t})$ are
$$ \mathbf{M}_{\tau}, \;\;\; \tau\subset \mu, \mu /\tau \in HS$$

\end{corollary}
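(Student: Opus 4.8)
The plan is to combine the two preceding results mechanically. By Lemma \ref{lem:Delta_t_decomp} we have the $S_k$-equivariant decomposition $\Delta_k^t \cong \bigoplus_{\mu \vdash k}\bigoplus_{\tau \in B^t_{\mu}} \mu \otimes \X_\tau$, and each $\X_\tau$ with $\tau \in B^t_\mu$ is a tilting object, hence standardly filtered; thus $\Delta_k^t$ itself is standardly filtered in $\uRep^{ab}(S_t)$, being a finite direct sum of tiltings. Applying the exact functor $\Hom_{S_k}(\mu, -)$ (which is just the $\mu$-isotypic multiplicity space, since $\mathbf{1}\in\uRep^{ab}(S_t)$ and the $S_k$-action is ``external'') picks out $\Hom_{S_k}(\mu, \Delta_k^t) \cong \bigoplus_{\tau \in B^t_\mu} \X_\tau$. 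So the standard components of $\Hom_{S_k}(\mu, \Delta_k^t)$ are exactly the union over $\tau \in B^t_\mu$ of the standard components of $\X_\tau$.

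Next I would feed in Lemma \ref{lem:standard-components}: for each $\tau \in B^t_\mu$, the standard components of $\X_\tau$ are precisely those $\mathbf{M}_\lambda$ with $\lambda \subset \mu$, $\mu/\lambda \in HS$, and $\mathbf{L}_\tau, \mathbf{L}_\lambda$ in the same block. Therefore the standard components of $\Hom_{S_k}(\mu, \Delta_k^t)$ are the $\mathbf{M}_\lambda$ for which there exists $\tau \in B^t_\mu$ with $\lambda \subset \mu$, $\mu/\lambda \in HS$, and $\lambda \stackrel{t}{\sim} \tau$. The only remaining point is to check that the side condition ``$\exists \tau \in B^t_\mu$ in the block of $\lambda$'' is automatically satisfied whenever $\lambda \subset \mu$ and $\mu/\lambda \in HS$ — i.e. that the list simplifies to exactly $\{\mathbf{M}_\lambda : \lambda \subset \mu,\ \mu/\lambda \in HS\}$ with no repetitions. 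This is where Remark \ref{rmk:Delta_t_decomp} does the work: in each block, $B^t_\mu$ contains exactly one element if any element $\tau$ of that block satisfies $\tau \subset \mu$, $\mu/\tau \in HS$ (when there are two such $\tau$ in the block they are consecutive, $\tau'^{(i-1)} \subset \tau'^{(i)}$, and $B^t_\mu$ selects the larger one $\tau'^{(i)}$). So given any $\lambda$ with $\lambda \subset \mu$, $\mu/\lambda \in HS$, the block of $\lambda$ contains at least one diagram (namely $\lambda$) meeting that horizontal-strip condition, hence $B^t_\mu$ contains a (unique) representative $\tau$ of that block, and by Lemma \ref{lem:standard-components} $\mathbf{M}_\lambda$ appears — and appears once, since $B^t_\mu$ meets each block at most once. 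Conversely every $\mathbf{M}_\lambda$ arising this way has $\lambda \subset \mu$, $\mu/\lambda \in HS$ by Lemma \ref{lem:standard-components}.

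The main (and really the only) obstacle is the bookkeeping in the last paragraph: verifying that ``$\lambda \subset \mu$ and $\mu/\lambda \in HS$'' is equivalent to ``$\lambda \stackrel{t}{\sim} \tau$ for some $\tau \in B^t_\mu$ with the $HS$ condition,'' i.e. that the block-matching constraint imposes nothing extra. This follows from the structure of blocks in \eqref{eq:block_Deligne} together with the parity/consecutivity analysis already carried out inside the proof of Lemma \ref{lem:standard-components} (the fact that within a block at most two consecutive $\tau'^{(i-1)} \subset \tau'^{(i)}$ can be horizontal-strip-contained in a fixed $\mu$, and that $B^t_\mu$ picks the larger, whose standard components include $\mathbf{M}_{\tau'^{(i-1)}}$ as well). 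Once that is in hand, the corollary is immediate: one writes
\begin{equation*}
\Hom_{S_k}(\mu, \Delta_k^t) \cong \bigoplus_{\tau \in B^t_\mu} \X_\tau,
\end{equation*}
notes it is standardly filtered as a direct sum of tilting objects, and reads off from Lemmas \ref{lem:Delta_t_decomp} and \ref{lem:standard-components} that its standard subquotients are exactly the $\mathbf{M}_\tau$ with $\tau \subset \mu$ and $\mu/\tau \in HS$, each with multiplicity one. I would phrase the writeup as ``direct consequence of Lemmas \ref{lem:Delta_t_decomp} and \ref{lem:standard-components}'' with the block-matching check spelled out in one or two sentences.
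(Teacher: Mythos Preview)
Your approach is correct and matches the paper's, which gives no proof beyond the one-line assertion ``direct consequence of Lemmas \ref{lem:Delta_t_decomp} and \ref{lem:standard-components}.'' You have in fact spelled out more of the bookkeeping than the paper does: the point that every $\lambda$ with $\lambda\subset\mu$, $\mu/\lambda\in HS$ has a representative of its block in $B^t_\mu$ (so that nothing is missed) is implicit in the paper and made explicit by you, via the observation that if $i$ is the largest index with $\tau'^{(i)}\subset\mu$ and $\mu/\tau'^{(i)}\in HS$, then necessarily $\tau'^{(i+1)}\not\subset\mu$, hence $\tau'^{(i)}\in B^t_\mu$.
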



\subsection{Schur-Weyl duality}\label{ssec:Deligne_SW}

An analog of classical Schur-Weyl duality for $\uRep^{ab}(S_{t})$ has been developed in \cite{En} by constructing an object $V^{\underline{\otimes} t}$ in $\Ind \mbox{-} \uRep(S_t)$ for $t \in \C$ together with an action of $\mathfrak{gl}(V)$ on it. This object is a polynomial interpolation of the usual tensor power $V^{\otimes d}$ in $\C[S_d] \otimes_{\C} \U(\mathfrak{gl}(V))$ for a vector space $V$.

We give here a short summary of the relevant results.

Let $V \simeq \C v_0 \oplus U$ be a unital vector space where $v_0$ denotes a distinguished non-zero vector in $V$. 


We then obtain the decomposition \[ \mathfrak{gl}(V) \simeq \C \id_V \oplus \mathfrak{u}_{\mathfrak{p}}^- \oplus \mathfrak{u}_{\mathfrak{p}}^+ \oplus \mathfrak{gl}(U) \] where $\mathfrak{u}_{\mathfrak{p}}^- \simeq U$ and $\mathfrak{u}_{\mathfrak{p}}^+ \simeq U^*$. Recall from lemma \ref{lem:tens_pwr_graded} that the usual tensor power $V^{\otimes d}$ is isomorphic as a $\C[S_d] \otimes U(\mathfrak{gl}(V))$-module to
\begin{equation}\label{eq:classical_tens_power}
 V^{\otimes d} \simeq \bigoplus_{k=0,\ldots,d}  \left(U^{\otimes k} \otimes \Delta_k^d \right)^{S_k}.
\end{equation}
\begin{remark}
 Consider $V$ as a graded vector space with $gr(v_0) =0$, $gr(U)=1$. Then the induced $\Z$-grading on $V^{\otimes d}$ corresponds precisely to the grading on the right hand side.
\end{remark}

 Since we have the analogs $\Delta_k^t$ of $\Delta^k_d=\C \Inj(\{1,\ldots,k\},\{1,\ldots,d\})$ in the Deligne category, we define a $\Z_{\geq 0}$-graded object 
\begin{equation}\label{eq:complex_tens_power}
 V^{\underline{\otimes} t} = \bigoplus_{k \geq 0} \left(U^{\otimes k} \otimes \Delta_k^t \right)^{S_k}.
\end{equation}
 Then $\mathfrak{gl}(V)$ acts on $V^{\underline{\otimes} t}$ as follows: $id_V$ acts by the scalar $t$, $\mathfrak{gl}(U)$ acts naturally on each summand $(U^{\otimes k} \otimes \Delta_k^t)^{S_k}$ and $\mathfrak{u}_{\mathfrak{p}}^+$, $\mathfrak{u}_{\mathfrak{p}}^-$ act by operators of degrees $1,-1$ respectively.

 
We consider the following category:

\begin{definition}
 The category $\co^{\mathfrak{p}}_{t, V}$ is defined to be the full subcategory of $\Mod_{\U(\gl(V))}$ whose objects $M$ satisfy the following conditions:
 \begin{itemize}
 \item \InnaB{$M$ is a Harish-Chandra module for the pair $(\gl(V), \Aut(V, v_0))$, i.e. the action of the Lie subalgebra $\mathfrak{u}_{\mathfrak{p}}^+ \oplus \mathfrak{gl}(U)$ on $M$ integrates to the action of the mirabolic subgroup $\Aut(V, v_0)$, with quotient $GL(U)$ acting polynomially (cf. \cite[Definition 1.2.4]{En}). 
 }
  \item $M$ is a finitely generated $\U(\gl(V))$-module.
  \item $\id_V \in \gl(V)$ acts by $t \id_M$ on $M$.
 \end{itemize}
\end{definition}

The category $\co^{\mathfrak{p}}_{t, V}$ is an Artinian abelian category, and is a Serre subcategory of the usual category $\co$ for $\gl(V)$. The $\gl(V)$-action on the object $V^{\underline{\otimes}  t}$ is a ``$\co^{\mathfrak{p}}_{t}$-type'' action, which allows us to define a contravariant functor from $\uRep^{ab}(S_t)$ to $\co^{\mathfrak{p}}_{t, V}$: \[SW_{t, V} :=  \Hom_{\uRep^{ab}(S_t)}(\cdot, V^{\underline{\otimes}  t})\]

This contravariant functor is linear and additive, yet \InnaA{only left} exact. To fix this problem, we compose this functor with the quotient functor \InnaB{$\hat{\pi}$} from $\co^{\mathfrak{p}}_{t, V}$ to the category $\widehat{\co}^{\mathfrak{p}}_{t, V}$: the localization of $\co^{\mathfrak{p}}_{t, V}$ by the Serre subcategory of finite-dimensional modules. \InnaB{We denote the newly obtained functor by $\widehat{SW}_{t, V}$.}

\begin{theorem}\cite[Theorem 1]{En}\label{SW_almost_equiv}
The contravariant functor $\widehat{SW}_{t, V}:\uRep^{ab}(S_{t}) \rightarrow {\widehat{\co}^{\mathfrak{p}}_{t, V}}$ is exact and essentially surjective.

Moreover, the induced contravariant functor $$ \quotient{\uRep^{ab}(S_{t})}{Ker(\widehat{SW}_{t, V})} \rightarrow \widehat{\co}^{\mathfrak{p}}_{t, V}$$ is an anti-equivalence of abelian categories, thus making $\widehat{\co}^{\mathfrak{p}}_{t, V}$ a Serre quotient of $\uRep^{ab}(S_{t})^{op}$.
\end{theorem}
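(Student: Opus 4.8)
The plan is to prove Theorem \ref{SW_almost_equiv} in four stages: (i) verify that $SW_{t,V}$, and hence $\widehat{SW}_{t,V}=\hat{\pi}\circ SW_{t,V}$, is well defined; (ii) prove $\widehat{SW}_{t,V}$ is exact; (iii) prove it is essentially surjective; (iv) show the induced contravariant functor $\uRep^{ab}(S_{t})/\Ker(\widehat{SW}_{t,V})\to\widehat{\co}^{\mathfrak{p}}_{t,V}$ is fully faithful. Granting (ii)--(iv), the anti-equivalence and the description of $\widehat{\co}^{\mathfrak{p}}_{t,V}$ as a Serre quotient of $\uRep^{ab}(S_{t})^{op}$ follow from the standard recognition criterion for Serre localizations. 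For (i) one must check that $\Hom_{\uRep^{ab}(S_{t})}(M,V^{\underline{\otimes} t})$ lies in $\co^{\mathfrak{p}}_{t,V}$ for every $M$: using the decomposition $V^{\underline{\otimes} t}=\bigoplus_{k\geq 0}(U^{\otimes k}\otimes\Delta_{k}^{t})^{S_{k}}$ together with the explicit $\gl(V)$-action (the scalar $t$ for $\id_{V}$, a polynomial action of $\gl(U)$ on each summand, degree $\pm 1$ operators for $\mathfrak{u}_{\mathfrak{p}}^{\pm}$), the central-character condition is immediate, integrability to the mirabolic subgroup $\Aut(V,v_{0})$ is checked summand by summand, and finite generation over $\U(\gl(V))$ follows from finite length of $M$ and from the fact that every object of $\uRep^{ab}(S_{t})$ is a subquotient of a finite direct sum of objects $\fh_{t}^{\otimes n}$.

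The computational engine of the proof is the evaluation of $SW_{t,V}$ on the tilting generators, equivalently on the objects $\Delta_{k}^{t}$. By Lemma \ref{lem:Delta_k_homs} one obtains $SW_{t,V}(\Delta_{k}^{t})\cong\bigoplus_{m\geq 0}\bigl(U^{\otimes m}\otimes\C\bar{P}_{k,m}\bigr)^{S_{m}}$, a $\gl(V)$-module whose underlying space and action depend polynomially on $t$ and which interpolates, term by term, the classical $(\gl(V),S_{d})$-module $\Hom_{S_{d}}(\Delta_{k}^{d},V^{\otimes d})$ of \eqref{eq:classical_tens_power}; in particular $SW_{t,V}(\triv)\cong\Sym(U)$ is the parabolic Verma module of $\gl(V)$ attached to the character on which $\id_{V}$ acts by $t$. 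Feeding in the block decomposition of $\Delta_{k}^{t}$ (Lemma \ref{lem:Delta_t_decomp}) and the uniform-in-$t$ combinatorics of compositions of partial pairings, one shows that $SW_{t,V}$ carries standard objects $\mathbf{M}_{\lambda}$ of $\uRep^{ab}(S_{t})$, up to the duality of category $\co$, to parabolic Verma modules or to finite-dimensional modules, matches the weight poset of $\uRep^{ab}(S_{t})$ with that of $\co^{\mathfrak{p}}_{t,V}$, and induces an isomorphism on $\Hom$-spaces after applying $\hat{\pi}$. This yields essential surjectivity (iii) and full faithfulness modulo the kernel (iv), with $\Ker(\widehat{SW}_{t,V})$ identified as the full subcategory of those $M$ with $SW_{t,V}(M)$ finite-dimensional.

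Exactness (ii) is the remaining point and, I expect, the main obstacle. Since $SW_{t,V}$ is a $\Hom$-functor it is left exact, so it is enough to show that the derived functors $R^{i}SW_{t,V}(M)=\Ext^{i}_{\uRep^{ab}(S_{t})}(M,V^{\underline{\otimes} t})$ are finite-dimensional $\gl(V)$-modules for all $M$ and all $i\geq 1$, hence are annihilated by $\hat{\pi}$. My approach would be a homological analysis in $\uRep^{ab}(S_{t})$: write $V^{\underline{\otimes} t}$ as the filtered union of its tilting truncations $\bigoplus_{k\leq N}(U^{\otimes k}\otimes\Delta_{k}^{t})^{S_{k}}$, decompose these into indecomposable tiltings $\X_{\tau}$ with multiplicity spaces $S^{\mu}(U)$ using Lemma \ref{lem:Delta_t_decomp}, and use the explicit block structure of $\uRep^{ab}(S_{t})$ to bound the higher $\Ext$-groups $\Ext^{i}(M,\X_{\tau})$, showing that block by block only a finite-dimensional contribution survives in the colimit over $N$. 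The difficulty is genuine: the classical analogue of this statement is vacuous because $\Rep(S_{d})$ is semisimple, so the non-semisimplicity of $\uRep^{ab}(S_{t})$ is exactly where the new work lies; the delicate point is to control the interplay between the infinite multiplicity spaces $S^{\mu}(U)$ and the (generally nonzero) higher $\Ext$-groups against the non-injective tilting summands of the Ind-object $V^{\underline{\otimes} t}$, and to prove that after localization by finite-dimensional modules these all collapse to zero.
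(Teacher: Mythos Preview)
The paper does not prove this theorem at all: it is quoted verbatim as \cite[Theorem 1]{En} and no argument is given in the present paper. Immediately after the statement the authors refer to \cite[Theorem 7.2.3]{En} for the explicit description of the images of simple, standard and projective objects, and to \cite{En} for the description of the kernel. So there is no ``paper's own proof'' to compare your proposal against; the result is imported wholesale from \cite{En}.

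That said, your outline tracks the structure of the argument in \cite{En} fairly closely. The computation of $SW_{t,V}(\Delta_k^t)$ via $\C\bar{P}_{k,m}$, the identification of images of standard objects with parabolic Verma modules (up to the duality on category $\co$), and the reduction of exactness to showing that the higher derived functors land in finite-dimensional modules are all essentially the steps taken there. Your identification of exactness as the crux is correct: in \cite{En} this is handled by a block-by-block analysis exploiting that each non-semisimple block of $\uRep^{ab}(S_t)$ is equivalent to representations of the infinite zigzag quiver, where the relevant $\Ext$ groups can be computed explicitly. Your proposed approach via tilting truncations and controlling the colimit is in the right spirit, but the phrase ``block by block only a finite-dimensional contribution survives'' hides the actual work; you would need the concrete description of projectives and their BGG-type resolutions in each block to make this go through, which is precisely what \cite{En} supplies.
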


For an explicit description of the image of the simple, standard and projective objects under $\widehat{SW}_{t, V}$ see \cite[Theorem 7.2.3]{En}. The kernel of the Serre quotient $$\uRep^{ab}(S_{t})^{op} \longrightarrow \widehat{\co}^{\mathfrak{p}}_{t, V}$$ can also be explicitly described (see \cite{En}). 


In fact, these results have been extended in \cite{En3} to the case of the unital vector space $(V = \C^{\infty},v_0 = e_1)$ to obtain an exact contravariant tensor functor $$\widehat{SW}_{t,\C^{\infty}}: \uRep^{ab}(S_{t}) \rightarrow {\widehat{\co}^{\mathfrak{p}_{\infty}}_{t, \C^{\infty}}}.$$ For the definition of ${\widehat{\co}^{\mathfrak{p}_{\infty}}_{t, \C^{\infty}}}$ see \cite[Definition 4.2.2]{En3}. In fact, the category ${\widehat{\co}^{\mathfrak{p}_{\infty}}_{t, \C^{\infty}}}$ can be obtained as a \textit{restricted inverse limit} \cite[3.4]{En3} of the inverse system of the categories ${\widehat{\co}^{\mathfrak{p}_{n}}_{t, \C^{n}}}$ with appropriate invariants functors between them.


\begin{proposition}\label{prop:SW_Deligne_infty_equivalence}
 The contravariant Schur-Weyl functor $$\widehat{SW}_{t,\C^{\infty}}: \uRep^{ab}(S_{t}) \rightarrow {\widehat{\co}^{\mathfrak{p}_{\infty}}_{t, \C^{\infty}}}$$ is an anti-equivalence of abelian categories.
\end{proposition}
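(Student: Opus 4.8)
The plan is to derive Proposition~\ref{prop:SW_Deligne_infty_equivalence} by combining Theorem~\ref{SW_almost_equiv} with the ``restricted inverse limit'' description of the target category from \cite{En3}. First I would recall that, by Theorem~\ref{SW_almost_equiv}, for a \emph{finite-dimensional} unital vector space $(V, v_0)$ the functor $\widehat{SW}_{t,V}$ is exact, essentially surjective, and identifies $\widehat{\co}^{\mathfrak{p}}_{t,V}$ with the Serre quotient $\uRep^{ab}(S_t)^{op}/\Ker(\widehat{SW}_{t,V})$. So the content of the proposition is that when one passes to $V = \C^{\infty}$ (with $v_0 = e_1$), the kernel shrinks to zero: that is, $\Ker(\widehat{SW}_{t,\C^{\infty}}) = 0$, which together with exactness and essential surjectivity (already asserted in the excerpt, coming from \cite{En3}) upgrades the Serre-quotient statement to an anti-equivalence.

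The key steps, in order, would be: (1) Recall from \cite[Theorem 7.2.3]{En} the explicit description of $\Ker(\widehat{SW}_{t,\C^n})$ — it consists precisely of the objects built from those indecomposables $\mathbf{X}_\lambda$ (or simples $\mathbf{L}_\lambda$) whose Young diagram $\lambda$ is ``too large'' relative to $n$, i.e. does not fit the constraint $|\lambda| + \lambda_1 \le n$; concretely, $\widehat{SW}_{t,\C^n}(\mathbf{L}_\lambda) = 0$ iff $|\lambda|+\lambda_1 > n$. (2) Observe that $\widehat{\co}^{\mathfrak{p}_\infty}_{t,\C^\infty}$ is the restricted inverse limit of the system $\{\widehat{\co}^{\mathfrak{p}_n}_{t,\C^n}\}_n$ along the invariants functors, and that $\widehat{SW}_{t,\C^\infty}$ is compatible with this system, i.e. composing with the projection to the $n$-th term recovers $\widehat{SW}_{t,\C^n}$ (this is part of the construction in \cite{En3}). (3) Conclude that an object $M$ of $\uRep^{ab}(S_t)$ lies in $\Ker(\widehat{SW}_{t,\C^\infty})$ iff $\widehat{SW}_{t,\C^n}(M) = 0$ for all $n$; but for any given simple constituent $\mathbf{L}_\lambda$ of $M$, choosing $n \geq |\lambda| + \lambda_1$ gives $\widehat{SW}_{t,\C^n}(\mathbf{L}_\lambda) \neq 0$, so by exactness $\widehat{SW}_{t,\C^n}(M) \neq 0$; hence $M$ has no simple constituents, i.e. $M = 0$. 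Thus $\Ker(\widehat{SW}_{t,\C^\infty}) = 0$. (4) Finally, invoke the general fact that an exact, faithful (kernel zero), essentially surjective functor between abelian categories which is, moreover, full on the relevant quotient — here Theorem~\ref{SW_almost_equiv} at each finite stage gives fullness after quotienting, and taking the limit preserves this — is an (anti-)equivalence; alternatively, cite directly the restricted-inverse-limit formalism of \cite[3.4]{En3}, which says the inverse limit of the anti-equivalences $\uRep^{ab}(S_t)^{op}/\Ker(\widehat{SW}_{t,\C^n}) \xrightarrow{\sim} \widehat{\co}^{\mathfrak{p}_n}_{t,\C^n}$ is an anti-equivalence $\uRep^{ab}(S_t)^{op} \xrightarrow{\sim} \widehat{\co}^{\mathfrak{p}_\infty}_{t,\C^\infty}$ once the intersection of the kernels is zero.

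I expect the main obstacle to be verifying the compatibility in step (2)–(3): one must check that $\widehat{SW}_{t,\C^\infty}$ really is the functor induced on the restricted inverse limit by the family $\{\widehat{SW}_{t,\C^n}\}_n$, with the transition maps being exactly the invariants functors used to define $\widehat{\co}^{\mathfrak{p}_\infty}_{t,\C^\infty}$ in \cite{En3}. This requires unwinding the definition of $V^{\underline{\otimes} t}$ for $V = \C^\infty$ as an $\Ind$-object and matching it, under the Schur--Weyl $\Hom$-functor, with the inverse system of the $V^{\underline{\otimes} t}$ for finite-dimensional $V = \C^n$; the subtlety is that $\Hom_{\uRep^{ab}(S_t)}(-, -)$ must be shown to interact correctly with the (co)limits defining both the $\Ind$-completion on the source side and the restricted inverse limit on the target side. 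Once this bookkeeping is in place, the faithfulness argument in step (3) is straightforward, since the ``size'' bound $|\lambda| + \lambda_1 \le n$ can always be met by taking $n$ large, so no nonzero object survives in the total kernel.
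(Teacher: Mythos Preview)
The paper does not actually prove Proposition~\ref{prop:SW_Deligne_infty_equivalence}: it is stated without proof, as a result imported from \cite{En3} (the preceding paragraph explicitly refers the reader to \cite{En3} for the definition of $\widehat{\co}^{\mathfrak{p}_{\infty}}_{t,\C^{\infty}}$ and the restricted-inverse-limit construction). So there is no ``paper's own proof'' to compare your proposal against.

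That said, your outline is a plausible reconstruction of the argument in \cite{En3}. Two comments on the details. First, your description of $\Ker(\widehat{SW}_{t,\C^n})$ in step~(1) is slightly off: the condition ``$|\lambda|+\lambda_1 > n$'' is the one governing the specialization functors $\Gamma_n$ and $\mathcal{S}_n$, not the Schur--Weyl kernel; the actual kernel in \cite{En} is described in terms of which parabolic Verma modules in $\co^{\mathfrak{p}_n}_{t,\C^n}$ become finite-dimensional, and the relevant bound on $\lambda$ involves $\dim U = n-1$ rather than the quantity $|\lambda|+\lambda_1$. This does not affect your main point---the kernels still form a decreasing filtration with trivial intersection---but you should look up the precise statement. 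Second, in step~(4) you correctly flag that exact, faithful, and essentially surjective do not by themselves give an equivalence; your alternative of invoking the restricted-inverse-limit machinery of \cite[3.4]{En3} is the right move, and that is indeed how the argument is organized there: one shows that $\uRep^{ab}(S_t)$ itself is the restricted inverse limit of its Serre quotients $\uRep^{ab}(S_t)/\Ker(\widehat{SW}_{t,\C^n})$, and then the anti-equivalence is the limit of the finite-level anti-equivalences from Theorem~\ref{SW_almost_equiv}.
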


\section{Objects \texorpdfstring{$\Delta_k^{\infty}$}{Delta} under the functor \texorpdfstring{$\mathbf{\Gamma}_t$}{Phi}}\label{sec:Del_correspondence}

Consider the object $\fh_t$ in $\uRep^{ab}(S_t)$. This is a Frobenius object, hence by the universal property of $\Rep(S_{\infty})$ (see Section \ref{ssec:univ_prop_S_infty}), we obtain a left exact symmetric monoidal functor \[ \mathbf{\Gamma}_t: \Rep(S_{\infty}) \to \uRep^{ab}(S_t)\] mapping $\mathfrak{h}_{\infty}$ to $\fh_t$.

\begin{remark}
It does not make sense to search for a left exact SM functor $\tilde{\mathbf{\Gamma}}_t: \Rep S_{\infty} \to \uRep(S_t)$ for $t \in \Z_{\geq 0}$, since in that case $\uRep(S_t)$ is not abelian. Theorem \ref{thrm:main} indeed implies that the essential image of $\mathbf{\Gamma}_t$ contains objects which are not in $\uRep(S_t)$.
\end{remark}

The next two Lemmas explain what the functor does to the objects $\Delta_k^{\infty}$ and to the morphisms $\res^i$ between them.

\begin{lemma}\label{lem:Delta_objects_corresp}
 The functor $\mathbf{\Gamma}_t: \Rep(S_{\infty}) \longrightarrow \uRep^{ab}(S_t)$ takes 
$\Delta_k^{\infty}$ to $\Delta_k^t$ for any $k \geq 0$, and the diagram
 $$ \xymatrix{&{S_k} \ar[r] \ar[rd] &\End_{S_{\infty}}(\Delta_k^{\infty}) 
\ar[d]^{\mathbf{\Gamma}_t}\\ &{} &\End_{\uRep(S_t)}(\Delta_k^t)}$$
 commutes.
\end{lemma}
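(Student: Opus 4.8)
The plan is to identify $\mathbf{\Gamma}_t(\Delta_k^\infty)$ by expressing $\Delta_k^\infty$ as the image of an explicit idempotent acting on $\fh_\infty^{\otimes k}$, and then showing that $\mathbf{\Gamma}_t$ carries this idempotent to the idempotent $x_k$ defining $\Delta_k^t$. Concretely: since $\mathbf{\Gamma}_t$ is a (left-exact, hence additive) symmetric monoidal functor sending $\fh_\infty$ to $\fh_t$, it sends $\fh_\infty^{\otimes k}$ to $\fh_t^{\otimes k}$ and induces an algebra homomorphism $\End_{S_\infty}(\fh_\infty^{\otimes k}) \to \End_{\uRep(S_t)}(\fh_t^{\otimes k})$. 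Both endomorphism algebras have the ``partition'' bases described in Section \ref{ssec:cat_S_infty} and Section \ref{ssec:Deligne_general}: in the $S_\infty$ case a basis indexed by equivalence relations on $[k]\sqcup[k]$ each class of which meets the top $[k]$, and in the Deligne case a basis indexed by \emph{all} partitions of $[k]\sqcup[k]$ together with powers of $t$. The first key step is to check that $\mathbf{\Gamma}_t$ sends the basis partition diagram $\pi$ (with all classes meeting the top row) to the same diagram $\pi$ viewed in $\uRep(S_t)$ — this follows by tracking the Frobenius structure maps (comultiplication $e_i\mapsto e_i\otimes e_i$, multiplication $e_i\otimes e_j \mapsto \delta_{ij}e_i$, counit the augmentation) through the functor, since each such structure map is exactly one of the elementary diagrams generating the partition category, and $\mathbf{\Gamma}_t$ is monoidal; the only subtlety is that ``cap''-type diagrams produce powers of $t$ in $\uRep(S_t)$ but the relevant ones here never close a loop because every class touches the top row.

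With that in hand, the second step is purely combinatorial: the idempotent $p_k \in \End_{S_\infty}(\fh_\infty^{\otimes k})$ whose image is $\Delta_k^\infty = \C\Inj([k],[\infty])$ is the ``Jones–Wenzl-type'' idempotent that, in terms of functions $[k]\to[\infty]$, projects onto those that are injective; one writes it explicitly by inclusion–exclusion over which pairs of coordinates are forced equal, exactly as in \cite[Section 2]{CO}. This same inclusion–exclusion formula, read in the partition-diagram presentation, is by definition (or by \cite[Section 2.1]{CO}) the idempotent $x_k$ cutting out $\Delta_k^t$. Since $\mathbf{\Gamma}_t$ sends each diagram to its namesake, it sends $p_k$ to $x_k$, hence (being additive and thus preserving images of idempotents, as $\uRep^{ab}(S_t)$ is Karoubian) $\mathbf{\Gamma}_t(\Delta_k^\infty) = \operatorname{im}(x_k) = \Delta_k^t$.

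For the commuting triangle, observe that the $S_k$-action on $\Delta_k^\infty$ is by permuting the set $[k]$, i.e.\ it is the restriction to $\Delta_k^\infty$ of the obvious $S_k$-action on $\fh_\infty^{\otimes k}$ permuting tensor factors; the corresponding permutation endomorphisms of $\fh_\infty^{\otimes k}$ are precisely the ``permutation'' partition diagrams (perfect matchings of $[k]\sqcup[k]$ respecting the bijection), which commute with $p_k$. Under $\mathbf{\Gamma}_t$ these go to the same permutation diagrams in $\End_{\uRep(S_t)}(\fh_t^{\otimes k})$, which restrict to the standard $S_k$-action on $\Delta_k^t$ (this is how the $S_k$-structure on $\Delta_k^t$ is defined in Section \ref{ssec:Delta_obj_Deligne}). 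Hence the two composites $S_k \to \End(\Delta_k^t)$ agree. The main obstacle I anticipate is book-keeping in the first step: being careful that the diagrammatic calculus for morphisms between tensor powers of $\fh_\infty$ (which is a quotient of the partition category, killing diagrams whose classes don't all meet the top) matches up correctly with the Deligne partition calculus under $\mathbf{\Gamma}_t$, and in particular that no spurious powers of $t$ or sign issues appear; everything else is a routine unwinding of the universal property and the explicit idempotent formulas.
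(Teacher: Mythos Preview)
Your proposal is correct and follows essentially the same approach as the paper: express $\Delta_k^{\infty}$ and $\Delta_k^t$ as images of idempotents in the partition-diagram bases of $\End(\fh^{\otimes k})$, observe that $\mathbf{\Gamma}_t$ sends each partition diagram $\pi \in \bar{P}_{k,k}^{\infty}$ to the same diagram in the Deligne category (by monoidality and the Frobenius structure), and note that the inclusion--exclusion formula defining the idempotent is identical on both sides. The paper phrases the inclusion--exclusion slightly differently, recursively defining idempotents $x_R$ for every equivalence relation $R$ on $[k]$ and then specializing to the trivial relation into singletons, but this is the same computation you describe.
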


\begin{proof}
In $\Rep(S_{\infty})$, the object $\Delta_k^{\infty}$ is a direct summand of $\mathfrak{h}_{\infty}^{\otimes k}$, given by an $S_k$-equivariant idempotent $e_k^{\infty}$. In $ \uRep^{ab}(S_t)$, $\Delta_k^t$ is a direct summand of $\mathfrak{h}_t^{\otimes k}$, given by an $S_k$-equivariant idempotent $e_k^{t}$. 

Since $\mathbf{\Gamma}_t$ is a monoidal functor, we have an $S_k$-equivariant isomorphism $$\mathbf{\Gamma}_t(\mathfrak{h}_{\infty}^{\otimes k}) \cong \mathfrak{h}_t^{\otimes k},$$ and we only need to show that $\mathbf{\Gamma}_t(e_k^{\infty}) = e_k^{t}$. This is done by writing out $e_k^{\infty}$ and $e_k^{t}$ explicitly, following \cite[Section 2]{CO} and \cite{Del07}. 

We use the basis of $\End_{S_{\infty}}(\mathfrak{h}_{\infty}^{\otimes k})$ given in Section \ref{ssec:cat_S_infty}. This basis is parametrized by equivalence relations $\pi$ on $[k] \sqcup [k] = \{1 , \ldots, k\} \sqcup \{1' , \ldots, k'\} $ such that each equivalence class meets the first copy of $[k]$. Denote the set of such equivalence relations by $\bar{P}_{k, k}^{\infty}$. By abuse of notation, for any $\pi \in \C P_{k, k}^{\infty}$ we will denote the corresponding map in $\End_{S_{\infty}}(\mathfrak{h}_{\infty}^{\otimes k})$ by $\pi$ as well. 

On the Deligne category side, we have a basis of $\End_{S_t}(\mathfrak{h}_t^{\otimes k})$ given in Section \ref{ssec:Deligne_general}. This basis is parametrized by all equivalence relations (partitions) $\pi$ on $[k] \sqcup [k]$. We denote these equivalence relations by $P_{k, k}$. Then the construction of the above bases, together with the fact that $\mathbf{\Gamma}_t$ is monoidal, implies $$\mathbf{\Gamma}_t(\pi) = \pi$$ for any $\pi \in \bar{P}_{k, k}^{\infty}$. Let us consider more generally a decomposition $$\mathfrak{h}_{\infty}^{\otimes k} \cong \bigoplus_{R} \Delta_{[k]/R}^{\infty}, \;\; \mathfrak{h}_t^{\otimes k} \cong \bigoplus_{R} \Delta_{[k]/R}^{t}$$ where $R$ runs through the set of equivalence relations on the set $[k]$ and $[k]/R$ is the set of equivalence classes (hence $\Delta_{[k]/R}^{\infty}$ is canonically isomorphic to $\Delta_{l}^{\infty} $ where $l=\abs{[k]/R}$ is the number of equivalence classes).

We will construct explicitly the corresponding idempotent in the infinite case (in the Deligne category it is done in the same way, cf. \cite[Section 2]{CO1}). In order to do so, we consider, a partial order $\geq$ on the set of equivalence relations on $[k]$: $R' \geq R$ if $R'$ is coarser (i.e. $i \sim_{R} j$ $\Rightarrow $ $i \sim_{R'} j$).

Now, for each equivalence relation $R$ on the set $[k]$, consider the direct summand of $$\mathfrak{h}_{\infty}^{\otimes k} \cong \C Fun([k], [\infty])$$ spanned by all functions $f: [k] \to [\infty]$ which are constant on the equivalence classes of $R$. This direct summand is $$\bigoplus_{R' \geq R} \Delta_{[k]/R'}^{\infty}.$$ The projection on this direct summand is an idempotent given by the minimal equivalence relation $\pi_R$ on $[k] \sqcup [k]$ which satisfies: $i \sim_{\pi_R} i'$ for every $1 \leq i \leq k$ and $i \sim_{\pi_R} j$ iff $i \sim_{R} j$. Then define recursively an idempotent $x_{R}$ in $\C P_{k, k}$ via
$$x_{R} = \pi_R - \sum_{R' \gneqq R} x_{R'}.$$ Thus, $x_{R}$ corresponds to the projection of $\mathfrak{h}_{\infty}^{\otimes k}$ onto the direct summand $\Delta_{[k]/R}^{\infty}$, and $$\mathbf{\Gamma}_t(x_{R}) = x_{R}$$ implying that $$\mathbf{\Gamma}_t \left(\Delta_{[k]/R}^{\infty}\right) \cong \Delta_{[k]/R}^{t}.$$

The required statement $$\mathbf{\Gamma}_t\left(\Delta_k^{\infty} \right) \cong \Delta_k^t $$ is then just the special case when $R = R_{sing}$ is the trivial partition of $[k]$ into singletons (i.e. minimal with respect to the partial order $\geq $), and $x_R$ corresponds to both $e_k^{\infty}$ and $e_k^t$.
\end{proof}

\begin{lemma}\label{lem:res_morph_corresp}
 The functor $\mathbf{\Gamma}_t$ takes $\res^i: \Delta_k^{\infty} \to \Delta_{k-1}^{\infty}$ to $\res^i: \Delta_k^t \to \Delta_{k-1}^t$ for any $i \in \{1, \ldots k\}$.
\end{lemma}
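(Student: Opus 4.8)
The plan is to reduce the statement to the corresponding identity about idempotent-decorated maps between tensor powers of the permutation object, exactly as in the proof of Lemma~\ref{lem:Delta_objects_corresp}. Recall that $\res^i\colon \Delta_k^{\infty}\to\Delta_{k-1}^{\infty}$ was defined in Section~\ref{ssec:prelim_rep_sym} as the linear map $\C\Inj([k],[\infty])\to\C\Inj([k-1],[\infty])$ induced by precomposition with $\iota_i\colon[k-1]\to[k]$. Under the identification of $\Delta_k^{\infty}$ with the direct summand $e_k^{\infty}\cdot\mathfrak{h}_{\infty}^{\otimes k}$ of $\mathfrak{h}_{\infty}^{\otimes k}$, I would first express $\res^i$ as a composite
$$
\Delta_k^{\infty}\hookrightarrow \mathfrak{h}_{\infty}^{\otimes k}\xrightarrow{\ \beta_i\ }\mathfrak{h}_{\infty}^{\otimes(k-1)}\twoheadrightarrow\Delta_{k-1}^{\infty},
$$
where $\beta_i$ is a fixed basis morphism in $\Hom_{S_{\infty}}(\mathfrak{h}_{\infty}^{\otimes k},\mathfrak{h}_{\infty}^{\otimes(k-1)})$ from the diagram basis of Section~\ref{ssec:cat_S_infty}. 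Concretely, $\beta_i$ should be the morphism attached to the equivalence relation on $[k]\sqcup[k-1]$ whose only nonsingleton block pairs $j\in[k-1]$ with $\iota_i(j)\in[k]$ and leaves the element $i\in[k]$ in a singleton; this is precisely the diagram that, after composing with $e_k^{\infty}$ on one side and $e_{k-1}^{\infty}$ on the other, cuts out the $\Inj$-part and restricts along $\iota_i$. The $\Hom$-space $\Hom_{S_{\infty}}(\Delta_k^{\infty},\Delta_{k-1}^{\infty})\cong\C\Inj([k-1],[k])$ is $k$-dimensional with basis $\{\res^1,\dots,\res^k\}$, so it suffices to identify $\res^i$ with this specific diagram.

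Next I would do the same on the Deligne side: by Definition~\ref{def:res_morphisms_Deligne} the morphism $\res^i\colon\Delta_k^t\to\Delta_{k-1}^t$ is given by the partial pairing diagram which pairs $j'\in[k-1]$ with $\iota_i(j)\in[k]$ and leaves $i\in[k]$ unpaired. Under $\Delta_k^t=e_k^t\cdot\mathfrak{h}_t^{\otimes k}$ this is again the composite $\Delta_k^t\hookrightarrow\mathfrak{h}_t^{\otimes k}\xrightarrow{\beta_i}\mathfrak{h}_t^{\otimes(k-1)}\twoheadrightarrow\Delta_{k-1}^t$ with $\beta_i$ the partition of $[k]\sqcup[k-1]$ described by the same combinatorial data. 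The point is that $\beta_i$ on both sides corresponds to ``the same diagram'': a partition of $[k]\sqcup[k-1]$ in which every block meets $[k]$ (so it lies in $\bar P_{k,k-1}^{\infty}$ as well as in $P_{k,k-1}$).

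Now I invoke Lemma~\ref{lem:Delta_objects_corresp}, which already gives $\mathbf{\Gamma}_t(\Delta_j^{\infty})\cong\Delta_j^t$ compatibly with the inclusions into and projections from $\mathfrak{h}^{\otimes j}$ (the idempotents $e_j^{\infty}$ go to $e_j^t$), together with the fact established in that proof that $\mathbf{\Gamma}_t$ is monoidal and sends the diagram basis element $\pi\in\bar P_{k,k-1}^{\infty}\subset\Hom_{S_{\infty}}(\mathfrak{h}_{\infty}^{\otimes k},\mathfrak{h}_{\infty}^{\otimes(k-1)})$ to the corresponding diagram $\pi\in P_{k,k-1}\subset\Hom_{\uRep(S_t)}(\mathfrak{h}_t^{\otimes k},\mathfrak{h}_t^{\otimes(k-1)})$. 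Applying $\mathbf{\Gamma}_t$ to the composite expression for $\res^i$ on the $S_{\infty}$ side and using functoriality and monoidality term by term then yields exactly the composite expression for $\res^i$ on the Deligne side, so $\mathbf{\Gamma}_t(\res^i)=\res^i$.

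The main obstacle is the bookkeeping in the first step: verifying that precomposition by $\iota_i$ on $\C\Inj([k],[\infty])$ corresponds, after sandwiching between the idempotents $e_k^{\infty}$ and $e_{k-1}^{\infty}$, to exactly the diagram $\beta_i$ and not to some other linear combination of diagrams. This requires tracking how the basis of $\Hom_{S_{\infty}}(\mathfrak{h}_{\infty}^{\otimes k},\mathfrak{h}_{\infty}^{\otimes(k-1)})$ by equivalence relations on $[k]\sqcup[k-1]$ restricts to the summands $\Delta_k^{\infty}$, $\Delta_{k-1}^{\infty}$; the cleanest way is to note that composing any diagram $\pi$ with $e_k^{\infty}$ kills all diagrams having a block with two elements in the top row $[k]$, so only the ``injective-type'' diagrams survive, and among those $\beta_i$ is the unique one realizing $f\mapsto f\circ\iota_i$. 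One may alternatively sidestep the explicit diagram computation by observing that $\Hom_{S_{\infty}}(\Delta_k^{\infty},\Delta_{k-1}^{\infty})$ and $\Hom_{\uRep(S_t)}(\Delta_k^t,\Delta_{k-1}^t)$ are both free of rank $k$ on the symbols $\res^1,\dots,\res^k$, that $\mathbf{\Gamma}_t$ is $S_k\times S_{k-1}$-equivariant and compatible with the face-map relations $\res^i\circ\res^j=\res^{j}\circ\res^{i+1}$ for $j\le i$, and that $\mathbf{\Gamma}_t(\res^1)$ is forced by its interaction with the $S_k$-action identified in Lemma~\ref{lem:Delta_objects_corresp}; but the direct diagrammatic identification is the most transparent and is what I would write out.
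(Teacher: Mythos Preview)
Your proposal is correct. It is close in spirit to the paper's argument but organizes the key step differently. The paper does not go through the diagram basis $\beta_i$ at all: it simply observes that on the full tensor powers the map $\res^i\colon\mathfrak{h}_{\infty}^{\otimes k}\to\mathfrak{h}_{\infty}^{\otimes k-1}$ is literally $\id^{\otimes i-1}\otimes\eta\otimes\id^{\otimes k-i}$, where $\eta$ is the counit of the Frobenius object $\mathfrak{h}_{\infty}$. Since $\mathbf{\Gamma}_t$ is the symmetric monoidal functor determined by sending the Frobenius object $\mathfrak{h}_{\infty}$ (with all its structure maps) to $\mathfrak{h}_t$, one gets $\mathbf{\Gamma}_t(\eta)=\eta_t$ for free, and hence $\mathbf{\Gamma}_t(\res^i)=\res^i$ on tensor powers immediately; restriction to the $\Delta$-summands via the idempotents then finishes, exactly as you do.

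Your route via the diagram basis works too, but it relies on the statement ``$\mathbf{\Gamma}_t(\pi)=\pi$ for $\pi\in\bar P_{k,k-1}^{\infty}$'', which in Lemma~\ref{lem:Delta_objects_corresp} was literally stated only for the square case $\bar P_{k,k}^{\infty}$. The extension is routine (each diagram is a composite of the Frobenius structure maps and the symmetry), but if you present it this way you should say so. The paper's formulation avoids this detour: by writing $\res^i$ directly in terms of $\eta$, it uses exactly the defining data of $\mathbf{\Gamma}_t$ and no further bookkeeping about the diagram basis is needed. Your alternative via $S_k\times S_{k-1}$-equivariance and the simplicial identities would also work, but is more circuitous than either argument.
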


\begin{proof}

Consider the map $\res^i: \mathfrak{h}_{\infty}^{\otimes k} \to \mathfrak{h}_{\infty}^{\otimes k-1}$ given by $$\id^{\otimes k-1} \otimes \eta \otimes \id^{\otimes n-k}$$
where $\eta: \mathfrak{h}_{\infty} \to \C$ is the counit of the Frobenius object $\mathfrak{h}_{\infty}$. Identifying $\mathfrak{h}_{\infty}^{\otimes k} \cong \C Fun([k], [\infty])$, this corresponds to the map $$\res^i: \C Fun([k], [\infty])\longrightarrow \C Fun([k-1], [\infty])$$ given in Section \ref{ssec:prelim_rep_sym}.

Similarly, we have the map $\res^i: \mathfrak{h}_t^{\otimes k} \to \mathfrak{h}_t^{\otimes k-1}$ given by $$\id^{\otimes k-1} \otimes \eta_t \otimes \id^{\otimes n-k}$$ where $\eta_t$ is the counit of the Frobenius object $\mathfrak{h}_{\infty}$. Since $\mathbf{\Gamma}_t$ is a monoidal functor such that $\mathbf{\Gamma}_t(\eta) =\eta_t$, we have: $\mathbf{\Gamma}_t(\res^i) = \res^i$.

Using the explicit idempotents $e_k^{\infty}$ and $e_k^t$, one immediately sees that the restriction of $\res^i$ to the direct summand $ \Delta_k^{\infty} \subset \mathfrak{h}_{\infty}^{\otimes k}$ gives precisely the map $$\res^i: \Delta_k^{\infty} \to \Delta_{k-1}^{\infty}$$ and the restriction of $\res^i$ to $\Delta_k^t$ gives precisely $\res^i: \Delta_k^t \to \Delta_{k-1}^t$. Hence $\mathbf{\Gamma}_t(\res^i): \mathbf{\Gamma}_t(\Delta_k^{\infty}) \to \mathbf{\Gamma}_t(\Delta_{k-1}^{\infty})$ equals $\res^i: \Delta_k^t \to \Delta_{k-1}^t$, as required.

\end{proof}


\section{Delta complex in \texorpdfstring{$\Rep(S_\infty)$}{Rep(S)}}\label{sec:Delta_complex_infty}

\subsection{Category of \texorpdfstring{$\mathbf{FI}$}{FI}-modules and the objects \texorpdfstring{$\Delta_k^{\infty}$}{Delta}}\label{ssec:Delta_complex_FI}

We will use the notation and results from Section \ref{sec:infinity}. For $V = \C^{\infty}$ let $A = \Sym V$, and consider the category $\Mod_A$ of finitely-generated
$A$-modules in $\Ind \mbox{-} \V$. We will use the notation $$\widetilde{\otimes} := \otimes_A$$ to denote the tensor product in this category.

\begin{remark}
 One should keep in mind that unlike the tensor product $\otimes$ in $\V$ which is exact, the tensor product $\widetilde{\otimes} = \otimes_A$ is only left-exact.
\end{remark}

Then the functor $${\Phi}: \V \xrightarrow{\sim} \bigoplus_N \Rep(S_N)$$ induces an equivalence (see Theorem \ref{thm:fi-mod}) $$\widehat{\Phi}: \Mod_A \xrightarrow{\sim} {\bf FI} \mbox{-} \Mod$$ of abelian categories. Once again, one should bear in mind that this is not an equivalence of monoidal categories.

Let us now consider, for each $n \geq 0$, the objects $(A \otimes V)^{\widetilde{\otimes} n} \cong A \otimes 
V^{\otimes n}$, together with the action of the group $S_n$ on them. We will study their images under the 
equivalence $\widehat{\Phi}$.

\begin{lemma}\label{lem:Phi_action_tensor_powers}
The functor $\widehat{\Phi}$ takes the object $A \otimes 
V^{\otimes n}$ to the {\bf FI}-module $$\bigoplus_{k \geq 0} 
\Ind^{S_{k+n}}_{S_k \times S_n} \left( \mathbb{C} \otimes \mathbb{C}[S_n] \right) \cong 
\bigoplus_{N \geq n} \mathbb{C} \Inj \left([n],[N]\right)= \bigoplus_{N \geq n} \Delta_n^N.$$

The action of $S_n$ on $(A \otimes V)^{\widetilde{\otimes} n}$ then corresponds to the 
action of $S_n$ on each of the $\Delta_n^N$.
\end{lemma}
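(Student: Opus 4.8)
The plan is to compute the image of $A \otimes V^{\otimes n}$ under $\widehat{\Phi}$ in two stages, first identifying $\widehat{\Phi}$ on tensor powers via the known description of $\Phi$, and then rewriting the resulting induced modules as spaces of injective maps. First I would recall from the earlier lemmas in Section~\ref{ssec:cat_A_mod} that $\Phi(V^{\otimes n}) \cong \C[S_n]$ placed in degree $n$, and that $\Phi$ intertwines $\otimes$ in $\V$ with induction in $\bigoplus_N \Rep(S_N)$. Since $A = \bigoplus_{d \geq 0} \Sym^d(\C^\infty)$ and $\Phi(\Sym^d(\C^\infty)) = \triv_{S_d}$ (the trivial representation in degree $d$, being $S^{(d)}V$), we get $\Phi(A) = \bigoplus_{d \geq 0} \triv_{S_d}$. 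Therefore, using that $\Phi$ converts $\otimes$ to induction,
\[
\Phi(A \otimes V^{\otimes n}) \cong \bigoplus_{d \geq 0} \Ind_{S_d \times S_n}^{S_{d+n}}\left(\triv_{S_d} \boxtimes \C[S_n]\right),
\]
with the component indexed by $d$ sitting in degree $d+n$; reindexing by $N = d+n$ and $k = d$ gives the first displayed isomorphism. The $S_n$-action is carried along throughout because all these equivalences and the induction functor are $S_n$-equivariant (the $S_n$ here being the ``external'' symmetric group permuting the $n$ tensor factors of $V^{\otimes n}$, which commutes with everything in sight).

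Next I would identify $\Ind_{S_k \times S_n}^{S_{k+n}}(\triv \boxtimes \C[S_n])$ with $\C\Inj([n],[k+n])$ as an $S_n$-module. The cleanest argument is a direct bijective one: $\C[S_{k+n}] \otimes_{\C[S_k \times S_n]} (\triv \boxtimes \C[S_n])$ has a basis indexed by cosets $(S_k \times S_n)\backslash S_{k+n}$ times a choice of element of $S_n$, and unwinding this, such data is the same as an embedding of the ``$S_n$-block'' $\{k+1,\dots,k+n\}$ as an ordered $n$-element subset of $[k+n]$ — i.e. an injection $[n] \hookrightarrow [k+n]$. More precisely, since $\Ind_{S_k}^{S_{k+n}}\triv \cong \C(S_k\backslash S_{k+n}) \cong \C\Inj([n],[k+n])$ as $S_n$-modules (cosets of $S_k$ correspond to ordered $n$-subsets of $[k+n]$), one has $\Ind_{S_k \times S_n}^{S_{k+n}}(\triv \boxtimes \C[S_n]) \cong \Ind_{S_n}^{S_n}\big((\Ind_{S_k}^{S_{k+n}}\triv)|_{S_n}\big) \cong \C\Inj([n],[k+n])$, using that $\C[S_n]$ is the regular representation so tensoring over $\C[S_n]$ with it and re-inducing does nothing. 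Alternatively this is exactly a special case of Lemma~\ref{lem:Delta_classical_decomp}'s first isomorphism $\Delta^N_n \cong \C\Inj([n],[N])$, which already tells us $\Ind_{S_{N-n}\times S_n}^{S_N \times S_n}\C \cong \C\Inj([n],[N])$ as $S_N \times S_n$-modules; restricting the action to the diagonal-free $S_n$ factor and comparing with the induced module above finishes it. Summing over $N \geq n$ gives $\bigoplus_{N \geq n} \Delta_n^N$, and the $S_n$-equivariance is preserved at each step.

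The main obstacle — though it is more bookkeeping than genuine difficulty — is keeping precise track of two different $S_n$-actions and which degree each summand lives in: the ``internal'' permutations inside $\C[S_n] = \Phi(V^{\otimes n})$ versus the ``external'' $S_n$ permuting the tensor factors, and making sure the identification $\Phi(V^{\otimes n}) \cong \C[S_n]$ from the earlier lemma matches the external $S_n$-action on $(A\otimes V)^{\widetilde\otimes n}$ with the action we claim on each $\Delta_n^N$. This amounts to checking that under $\C[S_n] \cong \C\Inj([n],[n]) = \C S_n$ (the $k=0$ term), the external action is by left (or right) translation and agrees with the action of $S_n$ on $\Inj([n],[n])$ by precomposition, and then that passing from $k=0$ to general $k$ via the $A$-module structure (multiplication by $A$) is $S_n$-equivariant. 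Since multiplication by $A = \Sym V$ only touches the ``ambient'' set $[N]$ and never the domain $[n]$, it manifestly commutes with the $S_n$-action on $\Inj([n],[N])$, so this last point is immediate once the $k=0$ case is pinned down.
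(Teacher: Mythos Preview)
Your proposal is correct and follows essentially the same route as the paper: compute $\Phi(A)$ and $\Phi(V^{\otimes n})$ separately, invoke the fact that $\Phi$ turns $\otimes$ in $\V$ into induction in $\bigoplus_N \Rep(S_N)$, and track the external $S_n$-action via the right regular action on $\C[S_n]$. You are in fact more thorough than the paper's proof, which stops after obtaining the induced-module description and leaves the identification $\Ind_{S_k \times S_n}^{S_{k+n}}(\triv \boxtimes \C[S_n]) \cong \C\Inj([n],[k+n])$ implicit (it is essentially the first isomorphism of Lemma~\ref{lem:Delta_classical_decomp}).
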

\begin{proof}
We will say that an object $U$ in $\V$ is homogeneous of degree $d$ if $U$ belongs to the full Karoubian additive subcategory generated by $V^{\otimes d}$, i.e. $U$ is a direct sum of simple modules of the form $S^{\lambda} V$, $\lam \vdash d$.

Recall from \cite{SS-2} that for any two homogeneous objects $U_1, U_2  \in \V$ of degrees $d_1, d_2$, we have $$\Phi(U_1 \otimes U_2) = \Ind_{S_{d_1} \times S_{d_2}}^{S_{d_1 + d_2}} \Phi(U_1) \otimes \Phi(U_2).$$

Using the fact that $\Phi(S^{\lambda} V) = \lambda$ for any $\lambda$, we then obtain $${\Phi}(A) \cong \bigoplus_{k \geq 0}  \mathbb{C} , \;\; {\Phi}(V^{\otimes n}) \cong  \mathbb{C}[S_n]$$ where $\mathbb{C}[S_n]$ is considered an {\bf FI}-module sitting in degree $n$ (i.e. the left regular $S_n$-representation), and the isomorphism ${\Phi}(V^{\otimes n}) \cong  \mathbb{C}[S_n]$ is $S_n$-equivariant with respect to the right regular $S_n$-action on $\C[S_n]$.  

This proves the existence of an $S_n$-equivariant isomorphism $$\widehat{\Phi}(A \otimes 
V^{\otimes n}) \cong \bigoplus_{k \geq 0} \Ind^{S_{k+n}}_{S_k \times S_n} \left( \mathbb{C} \otimes \mathbb{C}[S_n] \right)$$ as required.
\end{proof}

\begin{lemma}\label{lem:Phi_action_mu_maps}
 Consider the multiplication map $$\mu: A\otimes V \cong \Sym(V)\otimes V \to \Sym(V) =A.$$ Then $\widehat{\Phi}(\mu)$ is the 
morphism $$\bigoplus_{N \geq 1} 
\Delta_1^N = \bigoplus_{N \geq 1} \mathbb{C}^N \longrightarrow \bigoplus_{N 
\geq 0} \Delta_0^N \cong \bigoplus_{N \geq 0} \mathbb{C}$$ given by the $S_N$-maps 
$\eta_N: \mathbb{C}^N \to \mathbb{C}$, $e_i \to 1$ ($i =1,\ldots, N$). 
\end{lemma}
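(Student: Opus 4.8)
The plan is to unwind the equivalence $\widehat{\Phi}$ componentwise, exactly as in the proof of Lemma \ref{lem:Phi_action_tensor_powers}. Recall that $\widehat{\Phi}$ is induced by the graded equivalence $\Phi : \V \xrightarrow{\sim} \bigoplus_N \Rep(S_N)$ which sends a homogeneous object of degree $d$ to an $S_d$-representation, and which intertwines the tensor product $\otimes$ in $\V$ with induction. First I would identify the source and target of $\mu$ as objects of $\V$ (really $\Ind\mbox{-}\V$): write $A = \bigoplus_{k\geq 0} \Sym^k V$ with $\Sym^k V = S^{(k)}V$ homogeneous of degree $k$, and $A\otimes V = \bigoplus_{k\geq 0} \Sym^k V \otimes V$, where the degree-$(k+1)$ summand $\Sym^k V\otimes V$ decomposes (Pieri) as $\Sym^{k+1}V \oplus S^{(k,1)}V$. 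Applying $\Phi$ and using $\Phi(S^\lambda V)=\lambda$ together with the induction-product rule, $\widehat{\Phi}(A) = \bigoplus_{N\geq 0}\C$ with $\C$ the trivial rep of $S_N$ (i.e. $\Delta_0^N$), and $\widehat{\Phi}(A\otimes V) = \bigoplus_{N\geq 1}\Ind_{S_{N-1}\times S_1}^{S_N}(\C\boxtimes\C) = \bigoplus_{N\geq 1}\C^N = \bigoplus_{N\geq 1}\Delta_1^N$, matching the statement.

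Next I would pin down which map $\widehat{\Phi}(\mu)$ is. The key point is that $\mu : \Sym(V)\otimes V \to \Sym(V)$ is the unique (up to scalar) nonzero $\V$-morphism $A\otimes V \to A$ that is compatible with the $A$-module structure; on the degree-$(k+1)$ graded piece it is the canonical surjection $\Sym^k V\otimes V \twoheadrightarrow \Sym^{k+1}V$, which kills the $S^{(k,1)}V$ summand. Under $\Phi$, the degree-$(k+1)$ piece of the source becomes $\Ind_{S_k\times S_1}^{S_{k+1}}(\C\boxtimes\C) = \C^{k+1}$ (the permutation module, $= \C^{(k+1)}\oplus$ standard $= \C\oplus S^{(k,1)}$-image), the target becomes $\C$ (trivial rep of $S_{k+1}$), and the map is the projection onto the trivial summand of the permutation module, i.e. exactly the augmentation $\eta_{k+1}: \C^{k+1}\to\C$, $e_i\mapsto 1$. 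So reindexing $N = k+1$ over $N\geq 1$ gives the claimed description $\bigoplus_{N\geq 1}\eta_N : \bigoplus_{N\geq 1}\C^N \to \bigoplus_{N\geq 0}\C$ (with the $N=0$ summand of the target receiving nothing, consistent with $\mu$ landing in positive degrees from $A\otimes V$).

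The one thing requiring a little care — and the main obstacle — is matching the normalizations/identifications so that $\widehat{\Phi}(\mu)$ is literally the augmentation $e_i\mapsto 1$ rather than some nonzero rescaling, and in particular confirming it is nonzero on every graded piece. I would handle this by checking it on one graded piece concretely (say degree $1$: $\mu$ restricted to $\Sym^0 V\otimes V = V \to \Sym^1 V = V$ is the identity, and $\Phi$ of this is $\mathrm{id}: \C^1\to\C$, i.e. $\eta_1$), then invoking functoriality of $\mu$ with respect to the $A$-action: multiplication by $A$ intertwines the various graded components of $\mu$, and since the $A$-module generators of $A\otimes V$ sit in degree $1$ where we have already identified the map, the identification propagates to all degrees. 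This also shows nonvanishing in each degree. With this, the proof is complete.
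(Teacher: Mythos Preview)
Your proposal is correct and follows the same basic strategy as the paper --- analyze $\widehat{\Phi}(\mu)$ componentwise via the graded equivalence $\Phi$. The difference lies in how you pin down the map on each component. The paper argues more directly: since $\mu$ is surjective and $\Phi$ is an equivalence of abelian categories, $\Phi(\mu)$ is surjective; hence each $N$-th component $\Phi(\mu)_N : \C^N \to \C$ is a nonzero $S_N$-equivariant map, and as $\Hom_{S_N}(\C^N,\C)$ is one-dimensional this forces $\Phi(\mu)_N \cong \eta_N$. You instead unpack the map explicitly via the Pieri decomposition of $\Sym^k V \otimes V$ and then propagate the normalization across degrees using the $A$-module structure. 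Both routes work; the paper's surjectivity-plus-uniqueness argument is slicker and sidesteps the normalization question entirely (indeed, the paper only asserts the identification up to isomorphism, which is all that is used downstream). Your extra care with the scalar is more precise than strictly needed, but certainly not wrong.
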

\begin{proof}
\InnaA{Consider the $N$-th component ${\Phi}(\mu)_N$ of the map $\Phi(\mu)$.} The map $${\Phi}(\mu)_N: \Delta_1^N = \mathbb{C}^N \longrightarrow  \mathbb{C}$$ is by definition $S_N$-equivariant. Since $\mu$ is surjective, and $\Phi$ is an equivalence, we conclude that $\Phi(\mu)$ is surjective as well, and hence $\widehat{\Phi}(\mu)_N \cong \eta_N$ (the latter being the unique non-zero $S_N$-equivariant map $\mathbb{C}^N \to \mathbb{C}$).
\end{proof}
More generally, consider $$\mu^{(i)} = \id^{\otimes i-1} \otimes \mu \otimes 
\id^{\otimes n-i}:  (A \otimes V)^{\widetilde{\otimes} n} \longrightarrow (A \otimes 
V)^{\widetilde{\otimes} n-1}.$$

\begin{corollary}\label{cor:Phi_action_mu_maps}
The map $\widehat{\Phi}(\mu^{(i)})$ is the morphism $$\res^i: \bigoplus_{N \geq n} 
\Delta_n^N \longrightarrow \bigoplus_{N \geq n-1} \Delta_{n-1}^N  $$ given by 
maps $\res^i: \C \Inj([n], [N]) \to \C \Inj([n-1], [N])$. 
\end{corollary}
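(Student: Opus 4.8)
The plan is to bootstrap from Lemma~\ref{lem:Phi_action_mu_maps}, which treats $\mu^{(1)}$ in the case $n=1$, using two facts already available: the $S_n$-equivariance of $\widehat{\Phi}$ on the tensor powers $(A\otimes V)^{\widetilde{\otimes} n}$ recorded in Lemma~\ref{lem:Phi_action_tensor_powers}, and the compatibility of $\Phi$ with the external tensor product of $\V$ (under which $\Phi$ carries $\otimes$ to the induction product on $\bigoplus_N\Rep(S_N)$, as used in the proof of Lemma~\ref{lem:Phi_action_tensor_powers}).

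First I would reduce to the case $i=1$. For $1\le i\le n$ choose $\sigma_i\in S_n$ with $\sigma_i^{-1}(1)=i$ and $\sigma_i^{-1}$ order-preserving on $\{2,\dots,n\}$, so that $\sigma_i^{-1}|_{\{2,\dots,n\}}$ is, after reindexing, the map $\iota_i$ of \eqref{eq:iota}. A direct check on pure tensors gives $\mu^{(i)}=\mu^{(1)}\circ\sigma_i$ as morphisms $(A\otimes V)^{\widetilde{\otimes} n}\to(A\otimes V)^{\widetilde{\otimes} (n-1)}$ (here $\sigma_i$ also denotes the corresponding automorphism of the source), and the same combinatorics give $\res^i=\res^1\circ\sigma_i$ on each $\C\Inj([n],[N])$. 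Since $\widehat{\Phi}$ is a functor and intertwines the $S_n$-actions by Lemma~\ref{lem:Phi_action_tensor_powers}, it then suffices to prove $\widehat{\Phi}(\mu^{(1)})=\bigoplus_N\res^1$.

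For the case $i=1$ I would pass to $\Ind\mbox{-}\V$ and use the identification $A\otimes V^{\otimes n}\cong(A\otimes V)\otimes V^{\otimes n-1}$, under which $\mu^{(1)}=\mu\otimes\id_{V^{\otimes n-1}}$ with $\mu\colon A\otimes V\to A$ the multiplication of the twisted commutative algebra $A$. Applying $\Phi$, using its compatibility with $\otimes$ and $\Phi(V^{\otimes n-1})\cong\C[S_{n-1}]$ placed in degree $n-1$, one gets that $\widehat{\Phi}(\mu^{(1)})$ is the induction product of $\widehat{\Phi}(\mu)$ with $\id_{\C[S_{n-1}]}$; and $\widehat{\Phi}(\mu)=\bigoplus_M\eta_M\colon\bigoplus_M\Delta_1^M\to\bigoplus_M\Delta_0^M$ by Lemma~\ref{lem:Phi_action_mu_maps}. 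It then remains to evaluate this induction product in each degree $N$. There is a canonical decomposition
\[
\C\Inj([n],[N])\;\cong\;\bigoplus_{T\subseteq[N],\,|T|=n-1}\C\Inj(\{1\},[N]\setminus T)\otimes\C\Inj(\{2,\dots,n\},T),
\]
sending $f$ to the triple given by $T:=f(\{2,\dots,n\})$, the value $f(1)\in[N]\setminus T$, and the bijection $\{2,\dots,n\}\xrightarrow{\sim}T$ induced by $f$; this is exactly the realisation of $\Delta_n^N$ as $\Ind_{S_{N-n+1}\times S_{n-1}}^{S_N}(\Delta_1^{N-n+1}\boxtimes\C[S_{n-1}])$ coming from Lemma~\ref{lem:Phi_action_tensor_powers}, and likewise $\Delta_{n-1}^N\cong\Ind_{S_{N-n+1}\times S_{n-1}}^{S_N}(\Delta_0^{N-n+1}\boxtimes\C[S_{n-1}])$. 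Under these identifications $\eta_{N-n+1}\boxtimes\id$ acts on each $T$-summand by collapsing the $\C\Inj(\{1\},[N]\setminus T)$ factor, i.e.\ forgetting the value $f(1)$, and leaving the $\C\Inj(\{2,\dots,n\},T)$ factor untouched; this yields $f\mapsto f|_{\{2,\dots,n\}}=f\circ\iota_1$, that is $\res^1$. Summing over $N$ finishes the $i=1$ case and hence the corollary.

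The only genuinely delicate point is this last step: one must keep the various canonical identifications — $\widehat{\Phi}(A\otimes V^{\otimes n})\cong\bigoplus_N\Delta_n^N\cong\bigoplus_N\C\Inj([n],[N])$, the induction decomposition above, and the left/right-regular conventions for the $\C[S_{n-1}]$ factor — mutually consistent, so that the map $\eta\boxtimes\id$ genuinely reads off as ``forget $f(1)$''. Everything else is formal once Lemmas~\ref{lem:Phi_action_tensor_powers} and \ref{lem:Phi_action_mu_maps} are in hand. An alternative route to the whole statement is to observe that $\Hom_{\Mod_A}(A\otimes V^{\otimes n},A\otimes V^{\otimes n-1})\cong\End_{\V}(V^{\otimes n})\cong\C[S_n]$ by freeness of the $A$-modules involved, and to match the family $\{\mu^{(i)}\}$ (twisted by the $S_{n-1}$-action) with the family $\{\res^i\}$ directly, once again reducing to the computation of $\widehat{\Phi}(\mu)$ in Lemma~\ref{lem:Phi_action_mu_maps}.
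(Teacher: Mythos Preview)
Your proof is correct and follows the same two-step strategy as the paper: reduce to $i=1$ via the $S_n$-equivariance of Lemma~\ref{lem:Phi_action_tensor_powers}, then invoke Lemma~\ref{lem:Phi_action_mu_maps}. The paper's proof is simply these two lines; you have filled in the induction-product computation for the $i=1$ case explicitly (writing $\mu^{(1)}=\mu\otimes\id_{V^{\otimes n-1}}$ and unpacking $\Ind(\eta\boxtimes\id)$ on $\C\Inj([n],[N])$), which the paper leaves implicit.
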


\begin{proof}
Using the fact that $$\widehat{\Phi}(A \otimes 
V^{\otimes n}) \cong \bigoplus_{N \geq n} \Delta_n^N$$ is an $S_n$-equivariant isomorphism, it is enough to prove the claim in the case $i=1$. The claim then follows from Lemma \ref{lem:Phi_action_mu_maps}.
\end{proof}

Now, the quotient functor $$q:\mathbf{FI} \mbox{-} \Mod \longrightarrow  \Rep(S_{\infty})$$ 
takes $\bigoplus_{N \geq n} \Delta_n^N $ to $\Delta_n^{\infty}$ and the maps 
$\res^i: \bigoplus_{N \geq n} \Delta_n^N \longrightarrow \bigoplus_{N \geq n-1} 
\Delta_{n-1}^N  $ to $\res^i: \Delta_n^{\infty} \longrightarrow 
\Delta_{n-1}^{\infty}$. 

\begin{corollary}\label{cor:funct_A_mod_S_infty}
 Consider the composition of functors $$ \Mod_A \xrightarrow{\widehat{\Phi}} \mathbf{FI} \mbox{-} \Mod \xrightarrow{q}  \Rep(S_{\infty}).$$ Then for any $n$ and $i$, 
$$q\circ \widehat{\Phi} \left( (A \otimes V)^{\widetilde{\otimes} n}\right)  \cong \Delta_n^{\infty}, \;\;\; q\circ \widehat{\Phi} \left(\mu^{(i)} \right)=\res^i.$$
\end{corollary}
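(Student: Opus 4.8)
The plan is to deduce this corollary formally by chaining together the functor computations just carried out. First I would invoke Lemma \ref{lem:Phi_action_tensor_powers}: using the identification $(A\otimes V)^{\widetilde{\otimes} n}\cong A\otimes V^{\otimes n}$ of $A$-modules, compatible with the $S_n$-actions permuting the tensor factors, it furnishes an $S_n$-equivariant isomorphism
\[
\widehat{\Phi}\bigl((A\otimes V)^{\widetilde{\otimes} n}\bigr)\;\cong\;\bigoplus_{N\ge n}\Delta_n^N
\]
in $\mathbf{FI}\mbox{-}\Mod$. Next I would apply Corollary \ref{cor:Phi_action_mu_maps}, which --- with respect to exactly this isomorphism --- identifies $\widehat{\Phi}(\mu^{(i)})$ with the map $\res^i: \bigoplus_{N\ge n}\Delta_n^N\to\bigoplus_{N\ge n-1}\Delta_{n-1}^N$ assembled from the maps $\res^i: \mathbb{C}\Inj([n],[N])\to\mathbb{C}\Inj([n-1],[N])$. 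Finally I would push everything through the localization functor $q$: by the description recorded in the paragraph preceding the statement, $q$ carries $\bigoplus_{N\ge n}\Delta_n^N$ to $\Delta_n^\infty$ and the morphism $\res^i$ to $\res^i: \Delta_n^\infty\to\Delta_{n-1}^\infty$. Since $q$ is a functor, it respects the composition $q\circ\widehat{\Phi}$ and commutes with direct sums, so the three steps combine to give $q\circ\widehat{\Phi}\bigl((A\otimes V)^{\widetilde{\otimes} n}\bigr)\cong\Delta_n^\infty$ and $q\circ\widehat{\Phi}(\mu^{(i)})=\res^i$, which is the assertion.

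The only thing requiring a little care is that the three identifications are one and the same throughout --- in particular that the isomorphism produced by Lemma \ref{lem:Phi_action_tensor_powers} is the isomorphism with respect to which Corollary \ref{cor:Phi_action_mu_maps} reads off $\widehat{\Phi}(\mu^{(i)})$ --- but this is automatic, since that corollary is phrased in terms of the same isomorphism and its proof reduces to the case $i=1$ (Lemma \ref{lem:Phi_action_mu_maps}) and then transports along the $S_n$-action. I do not expect a genuine obstacle here: the corollary is a bookkeeping consequence of the preceding lemmas, and the only non-formal ingredient --- namely the behaviour of $q$ on the objects $\bigoplus_{N\ge n}\Delta_n^N$ and on the morphisms $\res^i$ --- has already been established (alternatively, one may observe that $\bigoplus_{N\ge n}\Delta_n^N\cong A\otimes\mathbb{C}[S_n]$ as $\mathbf{FI}$-modules and quote the corresponding computation from the proof of Lemma \ref{lem:Delta_decomposition_infty}, tracking the $S_n$- and $\res^i$-compatibilities).
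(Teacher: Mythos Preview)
Your proposal is correct and follows exactly the approach the paper intends: the corollary is stated without proof because it is an immediate concatenation of Lemma~\ref{lem:Phi_action_tensor_powers}, Corollary~\ref{cor:Phi_action_mu_maps}, and the paragraph immediately preceding the statement describing the effect of $q$ on $\bigoplus_{N\ge n}\Delta_n^N$ and on $\res^i$. Your attention to the compatibility of the isomorphisms is appropriate but, as you note, not an obstacle.
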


\begin{definition}[Torsion $A$-modules]\label{def:torsion_module}
 An $A$-module $M$ is called a {\it torsion} module (sometimes also called ``finite-length'') if it has finite length as an object in the semisimple category $\Ind \mbox{-} \V$ (in other words, it is a proper object in $\V$).
\end{definition}

Consider now the Serre quotient of $\Mod_A$ by the subcategory of torsion $A$-modules: $$\Mod_A \longrightarrow  \quotient{\Mod_A}{\Mod_A^{\tors}}.$$ 

As it was shown in \cite[Theorem 6.2.4]{SS}\cite[Proposition 1.3.5]{SS-2}, the composition $q\circ \widehat{\Phi}$ factors through $\quotient{\Mod_A}{\Mod_A^{\tors}}$ and induces an equivalence 
$$ \widehat{\Phi}: \quotient{\Mod_A}{\Mod_A^{\tors}} \xrightarrow{\sim} \Rep(S_{\infty}).$$

\subsection{The \texorpdfstring{complex $K_{n, \infty}^{\bullet}$ in $\Rep(S_\infty)$}{Delta complex in Rep(S)}}\label{ssec:Delta_complex_infty}
Consider in $\Mod_A$ the complex $$C^{\bullet} = 0 \to A\otimes V \longrightarrow A \to 0$$ 
where the differential is $\mu$, and $A\otimes V$ sits in degree $-1$. This complex has homology $$H^{-1} 
C^{\bullet}= \Im(A \otimes \Lambda^2 V \to A \otimes V), \;\; H^0 C^{\bullet}= 
\mathbb C$$

Now consider the complex $(C^{\bullet})^{\widetilde{\otimes} n}$. This complex has the form 
$$0 \to A \otimes V^{\otimes  n} \to \binom{n}{n-1} A \otimes V^{\otimes n -1}  
\to \binom{n}{n-2} A \otimes V^{\otimes n-2} \to \ldots \to \binom{n}{1} A 
\otimes V  \to  A \to 0$$

We now endow the complex $(C^{\bullet})^{\widetilde{\otimes} n}$ with a natural $S_n$ action.
Indeed, if we consider the action of $S_n$ on $(C^{\bullet})^{\widetilde{\otimes} 
n}$ via permutation of tensor factors, then $(C^{\bullet})^{\widetilde{\otimes} 
n}$ is isomorphic to the following complex (see Lemma \ref{lem:tens_pwr_cmplx}): 
$$0 \to  \left(\bdel_n^n \boxtimes A \otimes V^{\otimes n} \right)^{S_n} \to 
\left(\bdel_{n-1}^n \boxtimes  A \otimes V^{\otimes n -1}\right)^{S_{n-1}}  
\to \ldots \to  A \otimes V  \to   A \to 0$$
That is, the $(-k)$-th component of $(C^{\bullet})^{\widetilde{\otimes} n}$ is in fact the 
$\mathbb{C}[S_n] \boxtimes A$-module $\left( \bdel_k^n \boxtimes  A \otimes 
V^{\otimes k} \right)^{S_{k}}$. The differentials in this complex  
$$\partial: \left( \bdel_k^n \boxtimes  A \otimes V^{\otimes k} 
\right)^{S_{k}} \longrightarrow \left( \bdel_{k-1}^n \boxtimes  A \otimes 
V^{\otimes k-1} \right)^{S_{k-1}}$$ are given by $\partial = \sum_{i=1}^k (-1)^i 
\res^i \otimes \mu^{(i)}$.

Consider the image of the complex 
$(C^{\bullet})^{\widetilde{\otimes} n}$ with the $S_n$ action under the quotient functor $q\circ \widehat{\Phi}$. The obtained complex in $\Rep(S_{\infty})$ is denoted by $K_{n, \infty}^{\bullet}$, and will be called {\it the Delta complex}:
$$K_{n, \infty}^{\bullet}: 0 \to \left(\bdel_n^n \boxtimes \Delta_n^{\infty} \right)^{S_n} \to 
\left(\bdel_{n-1}^n \boxtimes  \Delta_{n-1}^{\infty}\right)^{S_{n-1}}  \to  \ldots 
\to  \Delta_1^{\infty}  \to   \Delta_0^{\infty} \to 0 $$
The $(-k)$-th component of this complex is the $S_n \times 
S_{\infty}$-module $\left( \bdel_k^n \boxtimes  \Delta_{k}^{\infty} 
\right)^{S_{k}}$. The differentials in this complex
$$\partial: \left( \bdel_k^n \boxtimes   \Delta_{k}^{\infty} \right)^{S_{k}} 
\longrightarrow \left( \bdel_{k-1}^n \boxtimes   \Delta_{k-1}^{\infty} 
\right)^{S_{k-1}}$$ are given by $\partial = \sum_{i=1}^k (-1)^i \res^i \otimes 
\res^i$.

\subsection{Cohomology of the Delta complex}\label{ssec:cohomology_Delta_infty}
Let $V = \C^{\infty}$ with basis $v_0, v_1, \ldots$, and set $U = span_{\C}\{ v_1, v_2, \ldots\}$. Denote by $\epsilon: V \to\C $ the map $ v_0 \mapsto 1 $.

We consider the space
$$V^{\underline{\otimes} \infty} := \bigoplus_{m \geq 0} (\Delta_m^{\infty} \otimes U^{\otimes m})^{S_m} \cong \varinjlim V^{\otimes n} $$
by analogue with \eqref{eq:classical_tens_power} and the construction of the complex tensor power in \cite{En}; see also \cite[Section 6.2]{SS}.

This is a $GL(U) \times S_{\infty}$-module, which has an additional action by the (additive) group $U^*$. Notice that by definition it is an injective object in $Ind-\Rep(S_{\infty})$.

We consider the (contravariant) functor $$SW_V: \Rep(S_{\infty})^{op} \to Ind-Rep(GL(U)), \;\;\; M \to \Hom_{S_{\infty}}(M, V^{\underline{\otimes} \infty})$$

\begin{lemma}
 The functor $SW_V$ is exact and faithful, and the image lies in $Rep(GL(U))$.
\end{lemma}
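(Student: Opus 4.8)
The plan is to relate the functor $SW_V$ to the classical Schur-Weyl setup and exploit the fact that $V^{\underline{\otimes}\infty}$ is an injective object built out of the $\Delta_m^{\infty}$. First I would establish the key structural fact: as a $GL(U)$-module (forgetting the $S_{\infty}$-action), $V^{\underline{\otimes}\infty} = \bigoplus_m (\Delta_m^{\infty}\otimes U^{\otimes m})^{S_m}$, and for any object $M$ of $\Rep(S_{\infty})$, the space $\Hom_{S_{\infty}}(M, V^{\underline{\otimes}\infty})$ is a polynomial $GL(U)$-module (a sum of Schur functors applied to $U$), using that $M$ is a subquotient of sums of $\fh_{\infty}^{\otimes n}$ and computing $\Hom_{S_{\infty}}(\fh_{\infty}^{\otimes n}, V^{\underline{\otimes}\infty})$ via the adjunction coming from the Frobenius/counit structure on $\fh_{\infty}$. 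This places the image in $\Rep(GL(U))$ (or its Ind-completion on arbitrary objects).

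Next I would prove exactness. Since $SW_V$ is already left exact (it is a $\Hom$ functor into a fixed object), it suffices to show it sends injectives to injectives, or more directly that $V^{\underline{\otimes}\infty}$ is an injective object of $\mathrm{Ind}\text{-}\Rep(S_{\infty})$ — which is asserted in the paragraph just before the lemma — and that $SW_V$ kills no short exact sequence on the right. Concretely, I would use that the injective objects of $\Rep(S_{\infty})$ are exactly the summands of $\fh_{\infty}^{\otimes r}$ (Proposition \ref{prop:infty_injective_obj} and the discussion after it), compute $SW_V$ on these generators explicitly, and check that the induced maps on $\Hom$-spaces out of an injective resolution stay exact. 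The cleanest route is: for a short exact sequence $0\to M'\to M\to M''\to 0$, apply $\Hom_{S_{\infty}}(-, V^{\underline{\otimes}\infty})$; right-exactness at the end follows because $\mathrm{Ext}^1_{S_{\infty}}(M'', V^{\underline{\otimes}\infty})=0$ for all $M''$, i.e. injectivity of $V^{\underline{\otimes}\infty}$.

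For faithfulness, I would argue that $SW_V(M)=0$ forces $M=0$. The point is that $V^{\underline{\otimes}\infty}$ is an injective cogenerator of $\Rep(S_{\infty})$: every indecomposable injective $Q_{\lambda}$ appears as a summand of $\fh_{\infty}^{\otimes|\lambda|}$ (last bullet of Proposition \ref{prop:infty_injective_obj}), and $\fh_{\infty}^{\otimes n}$ embeds into $V^{\underline{\otimes}\infty}$ compatibly with the $S_{\infty}$-action (via $V^{\otimes n}\hookrightarrow \varinjlim V^{\otimes n}$), so every $Q_{\lambda}$ is a summand of $V^{\underline{\otimes}\infty}$. Hence if $M\neq 0$ it has a nonzero map to some $Q_{\lambda}$, giving a nonzero element of $SW_V(M)$. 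Alternatively one can invoke the Schur-Weyl anti-equivalence machinery of \cite{En, En3} specialized to $V=\C^{\infty}$, but the cogenerator argument is more self-contained.

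The main obstacle I anticipate is the bookkeeping in identifying $\Hom_{S_{\infty}}(\fh_{\infty}^{\otimes n}, V^{\underline{\otimes}\infty})$ as an explicit polynomial $GL(U)$-module and checking that all the identifications are $GL(U)\times S_{\infty}$-equivariant — in particular, being careful that the colimit $\varinjlim V^{\otimes n}$ really does carry the stated $GL(U)$-action matching the $\Delta_m^{\infty}$-grading, and that the graded pieces $(\Delta_m^{\infty}\otimes U^{\otimes m})^{S_m}$ behave well under $\Hom$ out of the injective generators. The exactness and faithfulness, once injectivity/cogeneration of $V^{\underline{\otimes}\infty}$ is in hand, should be formal.
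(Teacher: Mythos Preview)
Your approach is sound in outline and agrees with the paper on exactness: both arguments simply invoke that $V^{\underline{\otimes}\infty}$ is injective in $\mathrm{Ind}\text{-}\Rep(S_{\infty})$, so the contravariant $\Hom$ into it is exact.

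For faithfulness and for the claim that the image lands in $\Rep(GL(U))$, the paper takes a more direct and informative route than you do. Rather than arguing that $V^{\underline{\otimes}\infty}$ is a cogenerator, the paper computes $SW_V(L_{\tau})$ explicitly: using the decomposition $\Delta_k^{\infty} \cong \bigoplus_{\lambda\vdash k} \lambda \otimes Q_{\lambda}$ (Lemma~\ref{lem:Delta_decomposition_infty}) and the fact that $L_{\tau}$ is the socle of $Q_{\tau}$ alone, one gets $\Hom_{S_{\infty}}(L_{\tau}, \Delta_k^{\infty}) = \tau$ if $\tau\vdash k$ and $0$ otherwise, whence $SW_V(L_{\tau}) \cong (\tau\otimes U^{\otimes|\tau|})^{S_{|\tau|}} = S^{\tau}U \neq 0$. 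This single computation gives faithfulness (no simple is killed), the image statement (every object has finite length, so its image is a finite extension of Schur functors), and an explicit formula that is used again in the next subsection.

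Your cogenerator argument is correct in conclusion but the parenthetical justification is off: the map ``$V^{\otimes n}\hookrightarrow \varinjlim V^{\otimes n}$'' does not identify $V^{\otimes n}$ with $\fh_{\infty}^{\otimes n}$ as an $S_{\infty}$-module, since here $V$ is a unital vector space with no $S_{\infty}$-action and the $S_{\infty}$-structure on $V^{\underline{\otimes}\infty}$ comes from the $\Delta_m^{\infty}$ factors. The right way to see that every $Q_{\lambda}$ is a summand of $V^{\underline{\otimes}\infty}$ is again via Lemma~\ref{lem:Delta_decomposition_infty}: the degree-$m$ piece $(\Delta_m^{\infty}\otimes U^{\otimes m})^{S_m}$ decomposes as $\bigoplus_{\lambda\vdash m} Q_{\lambda}\otimes S^{\lambda}U$. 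Once you make that substitution, your argument and the paper's essentially converge.
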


\begin{remark}
 In fact, one can show that $SW_V$ defines an equivalence between $\Rep(S_{\infty})$ and the category of $GL(U)$-polynomial $GL(U) \ltimes U^*$-representations.
\end{remark}

\begin{proof}
 Exactness follows from the fact that $V^{\underline{\otimes} \infty}$ is injective object in $Ind-\Rep(S_{\infty})$. Since it is exact, to show that this functor is faithful it is enough to check that $SW_V(L_{\tau}) \neq 0$ for any simple object $L_{\tau} \in \Rep(S_{\infty})$. Indeed, by Lemma \ref{lem:Delta_decomposition_infty}, we have an isomorphism of $S_k$-modules $$\Hom_{S_{\infty}} (L_{\tau}, \Del^{\infty}_k) = \bigoplus_{\lambda \vdash k} \lambda \otimes \Hom_{S_{\infty}}(L_{\tau}, Q_{\lambda})$$
 The latter is one-dimensional iff $\lambda = \tau$, hence 
 $$\Hom_{S_{\infty}} (L_{\tau}, \Del^{\infty}_k) = \begin{cases}
                                                    \tau &\text{  if  } \tau \vdash k \\
                                                    0 &\text{ else}
                                                   \end{cases}
$$

and 
$$SW_V(L_{\tau}) \cong \Hom_{S_{\infty}}(L_{\tau}, V^{\underline{\otimes} \infty}) \cong (\tau \otimes U^{\otimes \abs{\tau}})^{S_{\abs{\tau}}} \cong S^{\tau} U \neq 0 $$ as required.
 
\end{proof}

Let us check what happens to the complex $K_{n, \infty}^{\bullet}$ under the functor $SW_V$. 
\begin{proposition}
 The complex $SW_V(K_{n, \infty}^{\bullet})$ has no cohomology except in top degree.
\end{proposition}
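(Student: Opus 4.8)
The plan is to compute the complex $SW_V(K_{n,\infty}^{\bullet})$ explicitly and identify it with a Koszul-type complex, whose acyclicity (outside top degree) is classical. First I would apply the functor $SW_V = \Hom_{S_\infty}(-, V^{\underline{\otimes}\infty})$ to each term $\left(\bdel_k^n \boxtimes \Delta_k^\infty\right)^{S_k}$ of $K_{n,\infty}^{\bullet}$. Since $SW_V$ is exact and contravariant, and since taking $S_k$-invariants commutes with $\Hom$ (as $S_k$ acts on $\Delta_k^\infty$ while $V^{\underline{\otimes}\infty}$ carries the trivial $S_k$-action in this pairing), I would get
\[ SW_V\!\left(\left(\bdel_k^n \boxtimes \Delta_k^\infty\right)^{S_k}\right) \cong \left(\bdel_k^n \boxtimes \Hom_{S_\infty}(\Delta_k^\infty, V^{\underline{\otimes}\infty})\right)^{S_k}. \]
Using the computation in the proof of the previous lemma, $\Hom_{S_\infty}(\Delta_k^\infty, V^{\underline{\otimes}\infty}) \cong \bigoplus_{m\geq 0}(\Hom_{S_\infty}(\Delta_k^\infty, \Delta_m^\infty) \otimes U^{\otimes m})^{S_m} \cong U^{\otimes k}$ (the Schur--Weyl / binomial-type collapse, since $\Hom_{S_\infty}(\Delta_k^\infty,\Delta_m^\infty)\cong \Delta_m^k$ contributes $S^\mu U$ in the right degrees). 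So the $(-k)$-th term becomes $\left(\bdel_k^n \boxtimes U^{\otimes k}\right)^{S_k}$, which by Lemma \ref{lem:tens_pwr_super} (or \ref{lem:tens_pwr_cmplx}) is exactly the $(-k)$-th term of $(D^\bullet)^{\otimes n}$ where $D^\bullet$ is the two-term complex $U \xrightarrow{\text{incl into }V,\ \epsilon} \C$ realized as $V \xrightarrow{\epsilon} \C$ — more precisely, of the tensor power of the complex $C^\bullet_U : U \to \C$ with the zero differential, placed so that degree $0$ is $\C$.

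Next I would check that the differential $\partial = \sum_{i=1}^k(-1)^i \res^i \otimes \res^i$ is carried by $SW_V$ to the Koszul differential of this tensor power: the map $\res^i \colon \Delta_k^\infty \to \Delta_{k-1}^\infty$ dualizes, under the identification $\Hom_{S_\infty}(\Delta_\bullet^\infty, V^{\underline{\otimes}\infty})\cong U^{\otimes \bullet}$, to the operator $U^{\otimes k-1}\hookrightarrow U^{\otimes k}$ inserting into the $i$-th slot followed by the inclusion $U\subset V$ paired against $\epsilon$; equivalently it is the $i$-th face map of the complex $(V\xrightarrow{\epsilon}\C)^{\otimes n}$ built in Lemma \ref{lem:tens_pwr_cmplx}(1). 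This is a straightforward naturality check: $\res^i$ is induced by the counit $\eta = \epsilon$ applied in the $i$-th factor (as in the proof of Lemma \ref{lem:res_morph_corresp}), and $SW_V$ turns the counit into the structure map of $V^{\underline{\otimes}\infty}$. Hence $SW_V(K_{n,\infty}^\bullet)$ is isomorphic to $(C^\bullet_V)^{\otimes n}$ where $C^\bullet_V : V \xrightarrow{\epsilon} \C$ with $V$ in degree $-1$, $\C$ in degree $0$ (a genuine tensor power over $\C$ this time, not over $A$). Since $\epsilon$ is surjective with kernel $U$, the complex $C^\bullet_V$ is quasi-isomorphic to $U[1]$ (i.e. $U$ concentrated in degree $-1$), so $(C^\bullet_V)^{\otimes n}$ is quasi-isomorphic to $U^{\otimes n}[n]$ — cohomology only in degree $-n$, which is the top degree. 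Concretely: write $V = U \oplus \C v_0$; then $C^\bullet_V \cong U[1] \oplus (\C v_0 \xrightarrow{\sim}\C)$, and the second summand is contractible, so its $n$-th tensor power is too except for the $U^{\otimes n}[n]$ piece.

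The main obstacle, and the step deserving the most care, is the identification $\Hom_{S_\infty}(\Delta_k^\infty, V^{\underline{\otimes}\infty})\cong U^{\otimes k}$ together with the claim that it intertwines the two differentials — in other words, pinning down exactly which maps the $\res^i$ become after dualizing. One has to be careful that the $S_k$-invariants are taken on the correct side and that the signs $(-1)^i$ match the Koszul signs of Lemma \ref{lem:tens_pwr_cmplx}; a clean way to avoid sign bookkeeping is to observe that $SW_V$ is a symmetric monoidal-type functor on the relevant subcategory (it sends $\Delta_k^\infty \leftrightarrow$ the $k$-fold object built from $V$, compatibly with the $S_k$-actions and the $\res^i$), so it automatically sends the complex $(C^\bullet)^{\widetilde\otimes n}$-image $K_{n,\infty}^\bullet$ to $(C^\bullet_V)^{\otimes n}$ as complexes-with-$S_n$-action. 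Once that functoriality is in place, acyclicity outside top degree is the elementary fact about tensor powers of the two-term complex $V\xrightarrow{\epsilon}\C$, exactly parallel to \cite[Lemma 7.10]{Del07} and the super-analogue recorded in Lemma \ref{lem:tens_pwr_super}. I would present the argument in roughly that order: (1) $SW_V$ on terms via the $\Hom$-computation, (2) $SW_V$ on differentials via the counit, (3) recognize $(C^\bullet_V)^{\otimes n}$ and invoke the Koszul acyclicity.
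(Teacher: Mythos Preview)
Your overall strategy is exactly the paper's: compute $SW_V$ term by term, recognise the result as the $n$-fold tensor power of a two-term complex, and invoke acyclicity of that tensor power. Two computations in your write-up are off, however.

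First, $SW_V(\Delta_k^\infty)\cong V^{\otimes k}$, not $U^{\otimes k}$. You correctly reach $\bigoplus_{m\geq 0}(\Delta_m^k \otimes U^{\otimes m})^{S_m}$, but by Lemma~\ref{lem:tens_pwr_graded} (equation \eqref{eq:classical_tens_power}) this is precisely the decomposition of $V^{\otimes k}$ for $V=\C v_0\oplus U$; already the $m=0$ term contributes a copy of $\C$, so the answer cannot be $U^{\otimes k}$.

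Second, the direction of the resulting two-term complex is reversed. Since $SW_V$ is contravariant, $\res^i:\Delta_k^\infty\to\Delta_{k-1}^\infty$ becomes a map $V^{\otimes k-1}\to V^{\otimes k}$; tracing through the identifications shows it is $\eps^{(i)}$, insertion of $v_0$ into the $i$-th slot. Hence $SW_V(K_{n,\infty}^\bullet)$ is $(\C\xrightarrow{\eps}V)^{\otimes n}$ as in Lemma~\ref{lem:tens_pwr_cmplx}(2), with $\C$ in degree $0$ and $V$ in degree $1$, not $(V\xrightarrow{\epsilon}\C)^{\otimes n}$. Your sentence identifying $SW_V(\res^i)$ with ``the $i$-th face map of $(V\xrightarrow{\epsilon}\C)^{\otimes n}$'' contradicts the direction $V^{\otimes k-1}\to V^{\otimes k}$ you had just (correctly) written down. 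Both candidate complexes happen to be acyclic away from a single degree, so your conclusion survives by luck; but the identification should be fixed, and the aside about ``$C^\bullet_U:U\to\C$ with the zero differential'' should be removed, since a complex with zero differential is certainly not acyclic.
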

\begin{proof}

We first compute $SW_V(\Delta_k^{\infty})$. Recall from Section \ref{ssec:Delta_infty} that we have an isomorphism of $S_k \times S_m$-modules $$\Hom_{S_{\infty}}\left( \Delta^{\infty}_k, \Delta^{\infty}_m \right) \cong \Delta^k_m$$ This induces an isomorphism of $S_k$-modules

\begin{align*}
& SW_V(\Delta_k^{\infty}) \cong \bigoplus_{m \geq 0} \left(\Hom_{S_{\infty}}\left( \Delta^{\infty}_k, \Delta^{\infty}_m \right) \otimes  U^{\otimes m}\right)^{S_m} \cong  \bigoplus_{m \geq 0}  \left(\Delta^k_m  \otimes  U^{\otimes m}\right)^{S_m} \cong V^{\otimes k}
\end{align*}

The map $SW_V(\res^i: \Delta_k^{\infty} \to \Delta_{k-1}^{\infty})$ is then given by $\eps^{(i)}: V^{\otimes k-1} \to V^{\otimes k}$, where $$\eps^{(i)} = \id \otimes \ldots \otimes \id \otimes \eps \otimes \id \otimes \ldots  \otimes \id$$ (insertion of $v_0$ in $i$-th position).

Hence the complex $SW_V \left( K_{n, \infty}^{\bullet} \right)$ of $S_n \times GL(U)$-modules has $k$-th component 
$\left( \bdel_k^n \boxtimes  V^{\otimes k} \right)^{S_{k}}$. The differentials in this complex  
$$\partial: \left( \bdel_k^n \boxtimes  V^{\otimes k}\right)^{S_{k}} \longrightarrow \left( \bdel_{k+1}^n \boxtimes 
V^{\otimes k+1} \right)^{S_{k+1}}$$ are given by $\partial = \sum_{i=1}^k (-1)^i 
(\res^i)^* \otimes \eps^{(i)}$. But this is just the complex $(\C \xrightarrow{\eps} V)^{\otimes n} $ with the $S_n$-action given by permuting the factors.

Thus we have an isomorphism of $S_n \times GL(U)$-complexes
$$SW_V \left( K_{n, \infty}^{\bullet} \right) \simeq (\C \xrightarrow{\eps} V)^{\otimes n} $$

Now, recall that $\otimes$ is bi-exact, and the complex $\C \xrightarrow{\eps} V$ has cohomology only in top degree. Hence its tensor powers have cohomology only in the top $n$-th degree.

\end{proof}

Using the fact that $SW_V$ is exact and faithful, we conclude:
\begin{corollary}\label{cor:lower_cohom_Delta_complex_infty}
 The complex $K_{n, \infty}^{\bullet}$ has no cohomology in degrees larger than $-n$.
\end{corollary}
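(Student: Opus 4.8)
The corollary we want to establish is:
$$\text{The complex } K_{n,\infty}^{\bullet} \text{ has no cohomology in degrees larger than } -n.$$
The plan is to combine the two preceding results in this subsection. First I would recall the Proposition just proved: the complex $SW_V(K_{n,\infty}^{\bullet})$, which is isomorphic as a complex of $S_n \times GL(U)$-modules to $(\C \xrightarrow{\eps} V)^{\otimes n}$, has cohomology only in top degree — that is, in degree $-n$ under our indexing convention (here the two-term complex $\C \to V$ sits in degrees $-1, 0$, so its $n$-th tensor power is supported in degrees $-n, \dots, 0$, and the top of the nonvanishing cohomology is $0$; one must be slightly careful matching conventions, but the point is that $SW_V(K_{n,\infty}^{\bullet})$ is exact except in the single degree $-n$). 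Second, I would invoke the Lemma that $SW_V$ is exact and faithful.

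The key step is then purely formal: an exact functor commutes with the formation of cohomology, so for every $j$ we have
$$SW_V\!\left(H^{j}(K_{n,\infty}^{\bullet})\right) \cong H^{j}\!\left(SW_V(K_{n,\infty}^{\bullet})\right).$$
For $j > -n$ the right-hand side vanishes by the Proposition. Since $SW_V$ is faithful and $H^j(K_{n,\infty}^{\bullet})$ lies in $\Rep(S_\infty)$, a faithful exact functor reflects the zero object: if $SW_V(M) = 0$ then $M = 0$. Applying this with $M = H^j(K_{n,\infty}^{\bullet})$ gives $H^j(K_{n,\infty}^{\bullet}) = 0$ for all $j > -n$, which is exactly the claim.

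I do not expect any genuine obstacle here — the content has all been done in the Proposition and the Lemma, and this corollary is just the bookkeeping step extracting the statement about $K_{n,\infty}^{\bullet}$ itself from the statement about its image. The only thing requiring a little care is the indexing/degree conventions (which term of $C^{\bullet}$ sits in which degree, and hence where ``top degree'' of the $n$-fold tensor power lands), so in the write-up I would make explicit that ``cohomology only in top degree'' for $(\C \xrightarrow{\eps} V)^{\otimes n}$ translates to ``no cohomology in degrees $> -n$'' for $K_{n,\infty}^{\bullet}$, matching the sign convention under which $A \otimes V^{\otimes n}$ sits in degree $-n$.
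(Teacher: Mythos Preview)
Your approach is exactly the paper's: the corollary is deduced from the Proposition with only the line ``Using the fact that $SW_V$ is exact and faithful, we conclude.'' One clarification that resolves your degree bookkeeping: $SW_V = \Hom_{S_\infty}(-, V^{\underline{\otimes}\infty})$ is \emph{contravariant}, so the correct identity is $SW_V\!\left(H^{j}(K_{n,\infty}^{\bullet})\right) \cong H^{-j}\!\left(SW_V(K_{n,\infty}^{\bullet})\right)$; in the paper's convention $(\C \xrightarrow{\eps} V)^{\otimes n}$ sits in degrees $0,\ldots,n$ with cohomology only in degree $n$, which under the sign flip corresponds precisely to degree $-n$ of $K_{n,\infty}^{\bullet}$.
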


\begin{corollary}
 The cohomology of the complex $(C^{\bullet})^{\widetilde{\otimes} n}$ in degrees larger than $-n$ consists of torsion $A$-modules.
\end{corollary}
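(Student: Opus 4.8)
The plan is to deduce this corollary immediately from Corollary \ref{cor:lower_cohom_Delta_complex_infty} by tracking the cohomology along the chain of functors $\Mod_A \xrightarrow{\widehat{\Phi}} \mathbf{FI}\mbox{-}\Mod \xrightarrow{q} \Rep(S_\infty)$. Recall from Corollary \ref{cor:funct_A_mod_S_infty} that $q \circ \widehat{\Phi}$ sends $(C^{\bullet})^{\widetilde{\otimes} n}$ to the Delta complex $K_{n,\infty}^{\bullet}$, so the first thing to observe is that $\widehat{\Phi}$ is an equivalence of abelian categories (hence exact), so passing to $\mathbf{FI}\mbox{-}\Mod$ changes nothing about cohomology; the only genuine step is the localization functor $q$. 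So it suffices to show: the cohomology of $\widehat{\Phi}\left((C^{\bullet})^{\widetilde{\otimes} n}\right)$ in degrees $> -n$ becomes torsion after applying $q$, and then pull this back along the equivalence $\widehat{\Phi}$.

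The key point is that $q$ is the Serre localization functor $\mathbf{FI}\mbox{-}\Mod \to \mathbf{FI}\mbox{-}\Mod / \mathbf{FI}^{\mathrm{finite}}\mbox{-}\Mod \simeq \Rep(S_\infty)$ (equivalently, under $\widehat\Phi$, the localization $\Mod_A \to \Mod_A/\Mod_A^{\tors} \simeq \Rep(S_\infty)$ recalled at the end of Section \ref{ssec:Delta_complex_FI}). A Serre quotient functor is exact, so it commutes with taking cohomology of a complex: $H^i(q(X^{\bullet})) = q(H^i(X^{\bullet}))$. Therefore Corollary \ref{cor:lower_cohom_Delta_complex_infty}, which says $H^i(K_{n,\infty}^{\bullet}) = 0$ for $i > -n$, translates into $q\left(H^i\left((C^{\bullet})^{\widetilde{\otimes} n}\right)\right) = 0$ for $i > -n$. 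By definition of the Serre quotient, an object of $\Mod_A$ is killed by $q$ precisely when it lies in the Serre subcategory $\Mod_A^{\tors}$ of torsion (finite-length) $A$-modules. Hence $H^i\left((C^{\bullet})^{\widetilde{\otimes} n}\right)$ is a torsion $A$-module for every $i > -n$, which is exactly the assertion.

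One small subtlety to address: the cohomology objects $H^i\left((C^{\bullet})^{\widetilde{\otimes} n}\right)$ are a priori objects of $\Mod_A$ only if $\Mod_A$ is closed under kernels and cokernels of the morphisms appearing — that is, if the terms of the complex are genuinely finitely generated $A$-modules. The terms are $\binom{n}{k} A \otimes V^{\otimes k}$, which are visibly finitely generated, and $\Mod_A$ is abelian (as recalled in Section \ref{ssec:cat_A_mod}), so subquotients of finitely generated $A$-modules are finitely generated; thus the cohomology genuinely lives in $\Mod_A$ and the argument applies verbatim. I would spell out this one line and then conclude.

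I do not anticipate any real obstacle here: the statement is essentially a restatement of Corollary \ref{cor:lower_cohom_Delta_complex_infty} under the dictionary $\Rep(S_\infty) \simeq \Mod_A/\Mod_A^{\tors}$, and the only thing being used is the exactness of a Serre localization functor together with the characterization of its kernel. The mildly delicate point, if any, is making sure that ``no cohomology in $\Rep(S_\infty)$'' is correctly unwound to ``torsion cohomology in $\Mod_A$'' rather than ``zero cohomology in $\Mod_A$'' — i.e.\ remembering that $q$ has a nontrivial kernel — but that is precisely the content of the word ``torsion'' in the statement.
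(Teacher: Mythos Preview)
Your proposal is correct and is precisely the intended argument: the paper states this corollary without proof immediately after Corollary \ref{cor:lower_cohom_Delta_complex_infty}, since it follows at once from the exactness of the Serre quotient $q\circ\widehat{\Phi}:\Mod_A \to \Rep(S_\infty)$ and the identification of its kernel with $\Mod_A^{\tors}$ recalled at the end of Section \ref{ssec:Delta_complex_FI}. Your additional remark that the terms of $(C^{\bullet})^{\widetilde{\otimes} n}$ are finitely generated (so the cohomology genuinely lives in $\Mod_A$) is a reasonable sanity check but is not something the paper pauses on.
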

\subsection{Top cohomology of the Delta complex}\label{ssec:top_cohomology_Delta_infty}
The following lemma will be needed to compute the top cohomology of the complex:
\begin{lemma}\label{lem:decomp_prod_delta_infty_case}
 We have an isomorphism of $S_{\infty} \times S_n$-modules $$\left( \bdel_k^n \boxtimes  \Delta_{k}^{\infty} \right)^{S_{k}} = \bigoplus_{\substack{\mu \vdash n, \; \lambda \vdash k, \\ \lambda \subset \mu^{\vee}, \; \mu^{\vee} / \lambda \in VS} } Q(\lambda) \otimes \mu.$$
In particular for any $\mu \vdash n$, the simple components (subquotients) of $$\Hom_{S_n}(\mu, \left( \bdel_k^n \boxtimes  \Delta_{k}^{\infty} \right)^{S_{k}}) $$ in the category $\Rep(S_{\infty})$ are of the form $L_{\tau}$, where $ \abs{\tau}\leq k$ and $ \tau \subset \mu^{\vee}$. 

The multiplicity of $L_{\tau}$ in $\Hom_{S_n}(\mu, \left( \bdel_k^n \boxtimes  \Delta_{k}^{\infty} \right)^{S_{k}}) $ is $$\sharp \{\lambda \vdash k: \tau\subset \lambda\subset\mu^{\vee}, \, \mu^{\vee} / \lambda \in VS, \, \lambda / \tau \in HS \}.$$ 
\end{lemma}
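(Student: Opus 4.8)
The plan is to decompose $\left( \bdel_k^n \boxtimes \Delta_{k}^{\infty} \right)^{S_{k}}$ as an $S_\infty \times S_n$-module by combining the known decomposition of $\Delta_k^\infty$ into indecomposable injectives with the classical decomposition of $\bdel_k^n$, and then taking $S_k$-invariants. First, recall from Lemma \ref{lem:Delta_decomposition_infty} that $\Delta_k^\infty \cong \bigoplus_{\lambda \vdash k} \lambda \otimes Q_\lambda$ as an $S_k \times S_\infty$-module, and from Lemma \ref{lem:Delta_classical_decomp} (the $\bdel$ version) that $\bdel_k^n \cong \bigoplus_{\mu \vdash n,\, \mu^\vee/\lambda' \in VS} \mu \otimes \lambda'$ as an $S_n \times S_k$-module. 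Tensoring these over the common $S_k$ factor and applying the idempotent $e_{S_k}$ of taking $S_k$-invariants, I would use $\Hom_{S_k}(\lambda, \lambda') = \delta_{\lambda,\lambda'}\C$ (Schur's lemma for the symmetric group) to collapse the double sum: $\left( \bdel_k^n \boxtimes \Delta_{k}^{\infty} \right)^{S_{k}} \cong \bigoplus_{\lambda \vdash k} \mathrm{Hom}_{S_k}\!\big(\lambda^{\vee \vee}?, \ldots\big)$ — more precisely, matching the $S_k$-type $\lambda^\vee$ appearing in the $\bdel$-decomposition with the $S_k$-type $\lambda$ appearing in $\Delta_k^\infty$, one obtains exactly $\bigoplus_{\mu \vdash n,\, \lambda \vdash k,\, \lambda \subset \mu^\vee,\, \mu^\vee/\lambda \in VS} Q(\lambda) \otimes \mu$, which is the first displayed isomorphism. (Care is needed to keep the transpose bookkeeping straight: the $\bdel_k^n$ decomposition pairs $\mu$ with $\lambda$ subject to $\mu^\vee/\lambda \in VS$, and this is precisely the index set in the statement.)

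Next, to compute the simple constituents of $\Hom_{S_n}(\mu, (\bdel_k^n \boxtimes \Delta_k^\infty)^{S_k})$ for a fixed $\mu \vdash n$, I would apply $\Hom_{S_n}(\mu, -)$ to the decomposition just obtained; since the $\mu'$-isotypic part contributes $\delta_{\mu,\mu'}$, this yields $\bigoplus_{\lambda \vdash k,\, \lambda \subset \mu^\vee,\, \mu^\vee/\lambda \in VS} Q(\lambda)$ as an object of $\Rep(S_\infty)$. By Proposition \ref{prop:infty_injective_obj}, the simple constituents of $Q(\lambda)$ are exactly the $L_\tau$ with $\lambda/\tau \in HS$ (each with multiplicity one). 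Therefore $L_\tau$ appears in $\Hom_{S_n}(\mu, (\bdel_k^n \boxtimes \Delta_k^\infty)^{S_k})$ once for each $\lambda \vdash k$ with $\tau \subset \lambda \subset \mu^\vee$, $\mu^\vee/\lambda \in VS$, and $\lambda/\tau \in HS$; summing over all such $\lambda$ gives the stated multiplicity formula. The constraints $\tau \subset \lambda$ and $\lambda/\tau \in HS$ force $|\tau| \le |\lambda| = k$, and $\lambda \subset \mu^\vee$ forces $\tau \subset \mu^\vee$, giving the claimed restriction on which $L_\tau$ can occur.

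The main obstacle I anticipate is purely combinatorial rather than conceptual: getting the transpose/conjugation conventions exactly right so that the $S_k$-representation appearing in the $\bdel_k^n$-decomposition (namely $\lambda^\vee$ in the form $\mu \otimes \lambda^\vee$ with $\mu/\lambda \in HS$, equivalently $\mu \otimes \lambda$ with $\mu^\vee/\lambda \in VS$) is correctly matched against the $S_k$-representation $\lambda$ in $\Delta_k^\infty \cong \bigoplus \lambda \otimes Q_\lambda$, without an off-by-a-transpose error. I would double-check this by testing the formula on a small case (e.g. $n=1$ or $k=1$), where $(\bdel_1^n \boxtimes \Delta_1^\infty)^{S_1} = \fh_\infty \otimes \mathrm{(std-like\ part)}$ can be computed directly. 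The other routine point is justifying that taking $S_k$-invariants commutes with the direct sum decompositions and is exact here — this is immediate since we work over $\C$ and $S_k$ is finite, so $(-)^{S_k}$ is an exact additive functor given by an idempotent, exactly as used in Lemmas \ref{lem:tens_pwr_graded} and \ref{lem:tens_pwr_super}.
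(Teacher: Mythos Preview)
Your proposal is correct and follows essentially the same route as the paper: combine the decomposition $\Delta_k^{\infty} \cong \bigoplus_{\lambda \vdash k} Q(\lambda) \otimes \lambda$ from Lemma~\ref{lem:Delta_decomposition_infty} with the form $\bdel_k^n \cong \bigoplus_{\lambda \subset \mu^{\vee},\, \mu^{\vee}/\lambda \in VS} \mu \otimes \lambda$ from Lemma~\ref{lem:Delta_classical_decomp}, take $S_k$-invariants via Schur's lemma, and then read off the simple constituents of each $Q(\lambda)$ using Proposition~\ref{prop:infty_injective_obj}. Your concern about the transpose bookkeeping is legitimate but you have resolved it correctly; the last displayed formula in Lemma~\ref{lem:Delta_classical_decomp} is written precisely so that the $S_k$-types match without any residual $\vee$.
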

\begin{proof}
 As we have seen in Lemma \ref{lem:Delta_decomposition_infty}, there is an isomorphism of $S_\infty \times S_k$-modules $$ \Delta_{k}^{\infty} \cong \bigoplus_{\lambda \vdash k} Q(\lambda) \otimes \lambda. $$ By Lemma \ref{lem:Delta_classical_decomp}, there is also an isomorphism of $S_n \times S_k$-modules 
$$ \bdel_{k}^{n} \cong \bigoplus_{\substack{\mu \vdash n, \; \lambda \vdash k, \\ \lambda \subset \mu^{\vee}, \; \mu^{\vee} / \lambda \in VS} } \mu \otimes \lambda$$
This implies the decomposition $$\left( \bdel_k^n \boxtimes  \Delta_{k}^{\infty} \right)^{S_{k}}  = \bigoplus_{\substack{\mu \vdash n, \; \lambda \vdash k, \\ \lambda \subset \mu^{\vee}, \; \mu^{\vee} / \lambda \in VS} } Q(\lambda) \otimes \mu.$$ The desired property of the simple components now follows from Proposition \ref{prop:infty_injective_obj}.
\end{proof}

\begin{proposition}\label{prop:Delta_cx_infty_hmgy}
For $k <n$, $H^{-k}K_{n, \infty}^{\bullet} =0$, and $$H^{-n}(K_{n, \infty}^{\bullet}) = \bigoplus_{\mu \vdash n} \mu^{\vee} \otimes L_{\mu} $$ as an $ S_n \times S_{\infty}$-module. 
\end{proposition}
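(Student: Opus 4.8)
The plan is as follows. The vanishing $H^{-k}(K_{n,\infty}^{\bullet})=0$ for $k<n$ is already contained in Corollary \ref{cor:lower_cohom_Delta_complex_infty} (together with the fact that $K_{n,\infty}^{\bullet}$ is supported in degrees $-n,\dots,0$), so only the identification of the top cohomology remains. I will first compute $[H^{-n}]$ in a Grothendieck group, and then upgrade this to an honest isomorphism.

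Write $C^{-k}:=\bigl(\bdel_k^n\boxtimes\Delta_k^{\infty}\bigr)^{S_k}$ for the $(-k)$-th term of $K_{n,\infty}^{\bullet}$. By Lemma \ref{lem:decomp_prod_delta_infty_case} each $C^{-k}$ is a finite direct sum of objects $Q(\lambda)\otimes\mu$, and each $Q(\lambda)$ has finite length, so $H^{-n}$ is finite length and its class makes sense in the Grothendieck group of finite-length objects of $\Rep(S_{\infty})$ (which is free on the classes $[L_{\tau}]$), the $S_n$-action being recorded via isotypic components. Since the complex is acyclic away from degree $-n$, the Euler characteristic identity yields $[H^{-n}]=\sum_{k=0}^{n}(-1)^{n-k}[C^{-k}]$. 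Fixing $\mu\vdash n$ and taking $\mu$-isotypic parts for $S_n$, Lemma \ref{lem:decomp_prod_delta_infty_case} shows that the $\mu$-part of $C^{-k}$ is $\bigoplus_{\lambda\vdash k,\ \mu^{\vee}/\lambda\in VS}Q(\lambda)$; reindexing by $d=n-k=|\mu^{\vee}|-|\lambda|$, the $\mu$-part of $[H^{-n}]$ becomes $\sum_{d\ge 0}(-1)^{d}\bigl[\bigoplus_{\lambda:\,\mu^{\vee}/\lambda\in VS_d}Q(\lambda)\bigr]$. By the third bullet of Proposition \ref{prop:infty_injective_obj}, these summands are precisely the terms of an injective resolution of $L_{\mu^{\vee}}$, so the alternating sum equals $[L_{\mu^{\vee}}]$. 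Hence the $\mu$-isotypic component of $[H^{-n}]$ is $[L_{\mu^{\vee}}]$ for every $\mu\vdash n$.

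To promote this to an isomorphism I will use that $H^{-n}$ has no incoming differential, so it is an actual subobject of $C^{-n}=\bigl(\bdel_n^n\boxtimes\Delta_n^{\infty}\bigr)^{S_n}$. In Lemma \ref{lem:decomp_prod_delta_infty_case} the condition $\mu^{\vee}/\lambda\in VS$ with $|\lambda|=|\mu^{\vee}|=n$ forces $\lambda=\mu^{\vee}$, so $C^{-n}\cong\bigoplus_{\mu\vdash n}Q(\mu^{\vee})\otimes\mu$, whose socle is $\bigoplus_{\mu\vdash n}L_{\mu^{\vee}}\otimes\mu$ (recall from Section \ref{ssec:Mod_K} that $Q(\lambda)$ has simple socle $L_{\lambda}$). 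Moreover $Q(\mu^{\vee})/L_{\mu^{\vee}}$ has all composition factors $L_{\nu}$ with $|\nu|<n$ (first bullet of Proposition \ref{prop:infty_injective_obj}), so any subobject of $C^{-n}$ all of whose composition factors have size $n$ must lie inside this socle. The class computation above shows $H^{-n}$ has that property, so $H^{-n}\subseteq\bigoplus_{\mu}L_{\mu^{\vee}}\otimes\mu$; since $H^{-n}$ and the ambient socle have the same Grothendieck class and both are finite length, $H^{-n}=\bigoplus_{\mu\vdash n}L_{\mu^{\vee}}\otimes\mu$, which equals $\bigoplus_{\mu\vdash n}\mu^{\vee}\otimes L_{\mu}$ after reindexing the sum by $\mu\mapsto\mu^{\vee}$.

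The step I expect to be the main obstacle is this last passage from the Grothendieck-group equality to a genuine isomorphism of $S_n\times S_{\infty}$-modules: knowing $[H^{-n}]$ a priori does not pin down $H^{-n}$, and the argument works only because $H^{-n}$ sits as a literal submodule of the top term $C^{-n}$, whose indecomposable injective summands have simple socle, leaving no room for a larger submodule with the same class. Everything else is bookkeeping with the decompositions in Lemmas \ref{lem:Delta_decomposition_infty} and \ref{lem:decomp_prod_delta_infty_case} and the injective resolutions in Proposition \ref{prop:infty_injective_obj}.
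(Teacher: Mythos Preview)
Your proof is correct, and the overall architecture (vanishing from Corollary \ref{cor:lower_cohom_Delta_complex_infty}, then Euler characteristic, then upgrade to an actual isomorphism using that $H^{-n}$ is a subobject of $C^{-n}$) is the same as in the paper. There are two points of genuine difference worth noting.

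First, your computation of the Euler characteristic is different from the paper's and somewhat slicker. The paper expands each $[K^{-k}_\mu]$ into simple pieces using Lemma \ref{lem:decomp_prod_delta_infty_case}, then for each $\tau$ with $\mu^{\vee}/\tau\in HS$ counts the $\lambda$'s sandwiched between $\tau$ and $\mu^{\vee}$ and reduces the alternating sum to the binomial identity $\sum_{l=0}^{s}(-1)^{l}\binom{s}{l}=\delta_{s,0}$. You instead observe that, as a function of $d=n-k$, the $\mu$-isotypic part of $C^{-k}$ is literally the $d$-th term $\mathbb{I}^d=\bigoplus_{\lambda:\,\mu^{\vee}/\lambda\in VS_d}Q(\lambda)$ of the injective resolution of $L_{\mu^{\vee}}$ in Proposition \ref{prop:infty_injective_obj}, so the alternating sum is $[L_{\mu^{\vee}}]$ without any explicit combinatorics. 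This bypasses both the binomial computation and the paper's preliminary step of restricting attention to the set $T$ of $\tau$'s which can occur in $K^{-n}$.

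Second, your final socle argument is correct but heavier than needed. Once you know $[H^{-n}_\mu]=[L_{\mu^{\vee}}]$ in the Grothendieck group, the object $H^{-n}_\mu$ has a single composition factor and is therefore already isomorphic to $L_{\mu^{\vee}}$; there is nothing further to check. The paper simply stops there. Your detour through the socle of $C^{-n}$ would be the right move if the class computation had left several composition factors and you needed to rule out nontrivial extensions, but here it is unnecessary.
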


\begin{proof}

Recall that by Corollary \ref{cor:lower_cohom_Delta_complex_infty}, for $k <n$ we have: $H^{-k}K_{n, \infty}^{\bullet} =0$.

In order to compute the $(-n)$-th cohomology, we analyze the simple components which appear in it by comparing the $(-n)$-th term of the complex $K_{n, \infty}^{\bullet}$ to the Euler characteristic of this complex (taking into account the $S_n$-action on this characteristic).

Let $\mu \vdash n$ and consider the simple components of $ \Hom_{S_n}(\mu, K_{n, \infty}^{-k})$.


By Proposition \ref{lem:decomp_prod_delta_infty_case} the multiplicity of $L_{\tau}$ is  $$\sharp \{\lambda \vdash k: \tau\subset \lambda\subset\mu^{\vee}, \, \mu^{\vee} / \lambda \in VS, \, \lambda / \tau \in HS \}.$$


In particular, for $k=n$, we see that the simple components of $ \Hom_{S_n}(\mu, K_{n, \infty}^{n})$ are $ L_{\tau}$ where $\tau$ satisfies the condition
\begin{equation}\label{eq:cond_top_homology}
 \mu \vdash n, \; \tau\subset \mu^{\vee}, \; \mu^{\vee} / \tau \in HS 
\end{equation}
Hence the $(-n)$-th cohomology can only have simple components of this form. 

\InnaA{We denote the set of partitions $\tau$ such that $\tau$ satisfies \eqref{eq:cond_top_homology} by $T$.}


Let us consider the Euler characteristic $\chi$ of the complex $ \Hom_{S_n}(\mu, K_{n, \infty}^{\bullet})$ in the Grothendieck group of $\Rep(S_{\infty})$. 
\begin{equation}\label{eq:euler_char_infty}
  \chi(\Hom_{S_n}(\mu, K_{n, \infty}^{\bullet}))= \sum_{\tau \in T} \sum_{k \geq 0} (-1)^{-k} \sharp \{\lambda \vdash k: \tau\subset \lambda\subset \mu^{\vee}, \, \mu^{\vee} / \lambda \in VS, \, \lambda / \tau \in HS \}[ L_{\tau} ].
\end{equation}


Fix \InnaA{$\tau \in T$}. Let $j_1, \ldots, j_s$ be the indexes of the non-empty rows in the horizontal strip $\mu^{\vee} / \tau$. 

Then the Young diagrams $\lambda$ satisfying the conditions in \eqref{eq:euler_char_infty} are those \InnaA{where the vertical strip $\mu^{\vee} / \lambda$ is a subset of the horizontal strip $\mu^{\vee} / \tau$. That is, $\lambda \vdash k$ is obtained from $\mu^{\vee}$ by removing at most one box from} each of the rows $j_1, \ldots, j_s$, and altogether removing $n-k$ boxes. Denoting $l=n-k$, we obtain: 
$$\sum_{k \geq 0} (-1)^{(-k)} \sharp \{\lambda \vdash k \mid \substack{\tau\subset \lambda\subset \mu^{\vee}, \\ \mu^{\vee} / \lambda \in VS, \; \lambda / \tau \in HS }\} = \sum_{l \geq 0}^{s} (-1)^{l-n}  \binom{s}{l} = (-1)^n\delta_{s, 0}$$

Hence for $\tau$ satisfying conditions \eqref{eq:cond_top_homology}, $[L_{\tau} ]$ would appear with multiplicity zero in the Euler characteristic $\chi(\Hom_{S_n}(\mu, K_{n, \infty}^{\bullet}))$ iff $\tau \subsetneq \mu^{\vee}$, and with multiplicity $1$ iff $\tau = \mu^{\vee}$.

We conclude that $$H^{-n}(K_{n, \infty}^{\bullet}) = \bigoplus_{\mu \vdash n} \mu^{\vee}\otimes L_{\mu},$$ and the proposition is proved.
\end{proof}

\begin{corollary}\label{cor:Delta_cx_infty_resol}
 The complex $\Hom_{S_n}( \lambda^{\vee}, K_{n, \infty}^{\bullet})$ is an injective resolution of the simple $S_{\infty}$-module $L_{\lambda}$.
\end{corollary}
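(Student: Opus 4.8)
The plan is to combine the two main computations already made for the complex $K_{n,\infty}^{\bullet}$: Proposition \ref{prop:Delta_cx_infty_hmgy}, which says the complex has cohomology concentrated in the top degree $-n$ with $H^{-n}(K_{n,\infty}^{\bullet}) = \bigoplus_{\mu \vdash n} \mu^{\vee} \otimes L_{\mu}$ as an $S_n \times S_\infty$-module, and Corollary \ref{cor:lower_cohom_Delta_complex_infty}, which gives the vanishing in degrees $> -n$. First I would recall that each term $K_{n,\infty}^{-k} = (\bdel_k^n \boxtimes \Delta_k^{\infty})^{S_k}$ is an injective object of $\Rep(S_\infty)$: indeed $\Delta_k^{\infty}$ is a direct summand of $\fh_\infty^{\otimes k}$ by Section \ref{ssec:Delta_infty}, hence injective, and taking $S_k$-invariants of a direct summand tensored with the finite-dimensional $S_n$-representation $\bdel_k^n$ preserves injectivity. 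Therefore applying the exact functor $\Hom_{S_n}(\lambda^{\vee}, -)$ term by term produces a complex of injective objects of $\Rep(S_\infty)$.

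Next I would use exactness of $\Hom_{S_n}(\lambda^{\vee}, -)$ (the functor of taking the $\lambda^{\vee}$-isotypic multiplicity space for the $S_n$-action, which is exact since $\C[S_n]$ is semisimple) to commute it past cohomology: $H^{-k}(\Hom_{S_n}(\lambda^{\vee}, K_{n,\infty}^{\bullet})) \cong \Hom_{S_n}(\lambda^{\vee}, H^{-k}(K_{n,\infty}^{\bullet}))$. By Proposition \ref{prop:Delta_cx_infty_hmgy} this vanishes for $k < n$, and for $k = n$ it equals $\Hom_{S_n}(\lambda^{\vee}, \bigoplus_{\mu \vdash n} \mu^{\vee} \otimes L_{\mu}) = L_{\lambda}$, since the only $\mu$ with $\mu^{\vee} \cong \lambda^{\vee}$ as $S_n$-representations is $\mu = \lambda$, and $\mu^{\vee}$ appears with multiplicity one. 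Thus $\Hom_{S_n}(\lambda^{\vee}, K_{n,\infty}^{\bullet})$ is an acyclic complex of injectives except in degree $-n$, where its cohomology is $L_{\lambda}$; in other words it is an injective resolution of $L_{\lambda}$ (shifted so that $L_\lambda$ sits in degree $-n$, or equivalently, after reindexing, a genuine right resolution $0 \to L_\lambda \to \Hom_{S_n}(\lambda^\vee, K_{n,\infty}^{-(n-1)}) \to \cdots \to \Hom_{S_n}(\lambda^\vee, K_{n,\infty}^{0}) \to 0$).

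The only real subtlety — and the step I would be most careful about — is confirming that the terms $\Hom_{S_n}(\lambda^{\vee}, K_{n,\infty}^{-k})$ are genuinely injective in $\Rep(S_\infty)$, not merely that $K_{n,\infty}^{-k}$ is; this follows because $\Hom_{S_n}(\lambda^{\vee}, -)$ applied to an object with compatible $S_n$-action that is injective in $\Rep(S_\infty)$ yields a direct summand of that object (it is a retract via the corresponding isotypic idempotent in $\C[S_n]$), and direct summands of injectives are injective. One should also note that the decomposition in Lemma \ref{lem:decomp_prod_delta_infty_case} already exhibits $K_{n,\infty}^{-k}$ explicitly as a direct sum of $Q(\lambda) \otimes \mu$'s, making the injectivity transparent. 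With that in hand, the corollary is immediate from the two cited results, so the proof is short: establish termwise injectivity, commute $\Hom_{S_n}(\lambda^\vee,-)$ past cohomology, and read off the answer from Proposition \ref{prop:Delta_cx_infty_hmgy}.
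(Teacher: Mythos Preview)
Your proposal is correct and matches the paper's approach: the paper states this as an immediate corollary of Proposition \ref{prop:Delta_cx_infty_hmgy} without further argument, and your write-up simply spells out the implicit steps (termwise injectivity via Lemma \ref{lem:decomp_prod_delta_infty_case}, exactness of $\Hom_{S_n}(\lambda^\vee,-)$, and reading off the cohomology). One small indexing slip: in your final parenthetical the resolution should begin with $\Hom_{S_n}(\lambda^\vee, K_{n,\infty}^{-n})$, not $K_{n,\infty}^{-(n-1)}$.
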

We will use the following notation: $$I^{\bullet}_{\lambda}=\Hom_{S_n}( \lambda^{\vee}, K_{n, \infty}^{\bullet}).$$
In fact, this is precisely the injective resolution for $L_{\lambda}$ described in Proposition \ref{prop:infty_injective_obj}.

\section{The \texorpdfstring{complex $K_{n, N}^{\bullet}$}{Delta complex} in the classical setting}\label{sec:Delta_complex_classical}
Let us consider what happens in the classical Schur-Weyl duality with the analogous complex $K_{n, N}^{\bullet}$ in $\Rep(S_N)$. This complex is defined as
$$ K_{n, N}^{\bullet} : 0 \to  \left(\bdel_n^n \boxtimes \Delta_n^{N} \right)^{S_n} \to 
\left(\bdel_{n-1}^n \boxtimes  \Delta_{n-1}^{N}\right)^{S_{n-1}}  \to  \ldots 
\to  \Delta_1^{N}  \to   \Delta_0^{N} \cong \mathbb C \to 0 $$
The $(-k)$-th component of this complex is the $S_n \times 
S_{N}$-module $\left( \bdel_k^n \boxtimes  \Delta_{k}^{N} 
\right)^{S_{k}}$. The differentials in this complex
$$\partial: \left( \bdel_k^n \boxtimes   \Delta_{k}^{N} \right)^{S_{k}} 
\longrightarrow \left( \bdel_{k-1}^n \boxtimes   \Delta_{k-1}^{N} 
\right)^{S_{k-1}}$$ are given by $\partial = \sum_{i=1}^k (-1)^i \res^i \otimes 
\res^i$.

\subsection{Homology of the \texorpdfstring{complex $K_{n, N}^{\bullet}$}{Delta complex}}\label{ssec:derived}
Let $\mu \vdash n$, and consider the complex $$K_{\mu, N}^{\bullet} := \Hom_{S_n}(\mu^{\vee}, K_{n, N}^{\bullet}).$$ 

Then $$K_{\mu, N}^{\bullet} = \Gamma_N \Hom_{S_n}( \mu^{\vee}, K_{n, \infty}^{\bullet}) =\Gamma_N \left( I^{\bullet}_{\mu} \right)$$ where $\Gamma_N: \Rep(S_{\infty}) \to \Rep(S_N)$ is the specialization functor described in Section \ref{ssec:univ_prop_S_infty}, while $I^{\bullet}_{\mu}$ is the injective resolution of $L_{\mu}$ in $\Rep(S_{\infty})$ (see Corollary \ref{cor:Delta_cx_infty_resol}).

Then for any $N, k \geq 0$, we have: $$H^{-k} K_{\mu, N}^{\bullet} = R^{n-k} \Gamma_N L_{\mu}$$ where $R^{n-k} \Gamma_N$ is the right derived functor of the specialization functor $\Gamma_N$. The effect of $R^{n-k} \Gamma_N$ on simple modules is known and described in \cite[(6.4.6)]{SS}. Let us give a short description here as well.

Recall that a hook in a diagram $\mu$ whose vertex is $(i, j)$ is the set of all boxes $(i', j')$ of $\mu$ such that $i \geq i', j=j'$ or $i = i', j \geq j'$. The number of squares in the hook for which $i \geq i', j=j'$ is the height (leg) of the hook. For a hook $h$ in $\mu$, we denote by $\mu /h$ the Young diagram obtained by removing the hook $h$ from $\mu$.

\begin{remark}
 Removing from $\mu$ a hook whose vertex is $(i, j)$ is the same as removing a connected border strip starting with the last box in row $i$ and ending with the last box in column $j$.
\end{remark}

\begin{example}
Here are examples of two hooks, whose cells are denoted by $\times$. The left one has vertex $(1, 3)$ and the right one has vertex $(3,3)$:
$$\young(\hfil\hfil\times\times\times,\hfil\hfil\times\hfil,\hfil\hfil\times\hfil,\hfil\hfil,\hfil,\hfil) \;\;\;\;\; \;\;\;\;\; \young(\hfil\hfil\hfil\hfil\hfil,\hfil\hfil\hfil\hfil,\hfil\hfil\times\times,\hfil\hfil,\hfil,\hfil)$$
These hooks have heights $3$ and $1$ respectively, and removing these from the original diagram gives the Young diagrams 
$$\young(\hfil\hfil\hfil,\hfil\hfil\hfil,\hfil\hfil,\hfil\hfil,\hfil,\hfil) \;\;\;\;\; \;\;\;\;\; \young(\hfil\hfil\hfil\hfil\hfil,\hfil\hfil\hfil\hfil,\hfil\hfil,\hfil\hfil,\hfil,\hfil)$$
\end{example}

The $S_N$-module $R^{m} \Gamma_N (L_{\mu})$ is non-zero if and only if there exists a hook $h$ in $\mu$ satisfying the following properties: 

\begin{enumerate}
 \item $\mu / h$ is a Young diagram of size $N$
\item $h$ has height $m$, 
\item the vertex of $h$ lies in row $1$. 
\end{enumerate}
For each $\mu$, this may occur for at most one value of $m >0$ (in particular, this does not occur when $ N \geq n$). This is precisely when $N$ belongs to the strictly increasing sequence $$\{ \abs{\mu} - \mu_1 - \mu^{\vee}_j+j \}_{j \geq 1}$$ and $m = \mu^{\vee}_j$ for the appropriate $j$.

For any $\mu$, denote by $V_\mu $ the following representation: 
\begin{itemize}
 \item If there exists $j$ such that $\abs{\mu} - \mu_1 - \mu^{\vee}_j+j  =N$, let $h$ be the hook with vertex $(1, j)$, let $\lambda = \mu /h$, and set $V_{\mu} = \lambda$ be the corresponding representation of $S_N$. We will denote by $m= \mu^{\vee}_j$ the height of $h$ in this case.
\item Otherwise, set $V_\mu =0$, and $m = \infty$.
\end{itemize}

\begin{corollary}\label{cor:Delta_cx_class_hmgy}
%
For any $n$, $$H^{-n} K_{n, N}^{\bullet} =\bigoplus_{\mu \vdash n} \mu[N] \otimes \mu^{\vee}$$ where $\mu[N]$ is the Young diagram obtained from $\mu$ by adding a top row of length $N-\abs{\mu}$.

Furthermore, for $$H^{-k} K_{\mu, N}^{\bullet} \cong R^{n-k} \Gamma_N (L_{\mu}) =\begin{cases}
                                                                                 0 &\text{ if } k \neq n-m, \\
										  V_{\mu} &\text{ if } k = n-m.
                                                                                \end{cases}$$

In particular, for $ N \geq n$, we have: $H^{-k} K_{n, N}^{\bullet} =0$ for $k<n$. 
\end{corollary}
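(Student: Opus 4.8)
The plan is to deduce the statement entirely from the identification $K_{\mu,N}^{\bullet}\cong\Gamma_N(I^{\bullet}_{\mu})$ set up in Section~\ref{ssec:derived}, where $I^{\bullet}_{\mu}$ is the injective resolution of $L_{\mu}$ from Corollary~\ref{cor:Delta_cx_infty_resol} and $\Gamma_N\colon\Rep(S_{\infty})\to\Rep(S_N)$ is the specialization functor. I would first make this identification precise: $\Gamma_N$ is a left-exact, additive (indeed symmetric monoidal) functor, so by the argument of Lemmas~\ref{lem:Delta_objects_corresp} and~\ref{lem:res_morph_corresp}, with $\uRep^{ab}(S_t)$ replaced by $\Rep(S_N)$, it sends $\Delta_k^{\infty}\mapsto\Delta_k^N$ and $\res^i\mapsto\res^i$, and being additive it commutes with the idempotent operations $(\bdel_k^n\boxtimes-)^{S_k}$ and $\Hom_{S_n}(\mu^{\vee},-)$. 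Hence $\Gamma_N$ carries $K_{n,\infty}^{\bullet}$ to $K_{n,N}^{\bullet}$ term by term and differential by differential, and $I^{\bullet}_{\mu}$ to $K_{\mu,N}^{\bullet}$.

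For the cohomology in degrees $-k$ with $k<n$: each term of $I^{\bullet}_{\mu}$ is a summand of a direct sum of copies of the injective object $\Delta_k^{\infty}$, so $I^{\bullet}_{\mu}$ is an injective resolution of $L_{\mu}$; applying the left-exact functor $\Gamma_N$ and taking cohomology therefore computes its right derived functors, $H^{-k}(K_{\mu,N}^{\bullet})\cong R^{n-k}\Gamma_N(L_{\mu})$. Plugging in the classification of $R^{m}\Gamma_N(L_{\mu})$ recalled above (from \cite[(6.4.6)]{SS}) — nonzero, and equal to $V_{\mu}$, exactly when $m>0$ is the leg length $\mu^{\vee}_j$ of the row-$1$ hook whose removal leaves a diagram of size $N=\abs{\mu}-\mu_1-\mu^{\vee}_j+j$, and zero otherwise — gives the asserted dichotomy with $m=n-k$. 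For the ``in particular'' clause, the quantities $\abs{\mu}-\mu_1-\mu^{\vee}_j+j$ arise only for $1\le j\le\mu_1$ and are bounded above by $\abs{\mu}-\mu^{\vee}_{\mu_1}\le n-1$, so for $N\ge n$ no such hook exists, whence $R^{>0}\Gamma_N(L_{\mu})=0$ for every $\mu\vdash n$ and $H^{-k}(K_{n,N}^{\bullet})=0$ for $k<n$.

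It remains to identify the top cohomology. Since $K_{n,\infty}^{\bullet}$ vanishes in degree $-n-1$, its $(-n)$-th cohomology is just the kernel of the differential out of degree $-n$, which by Proposition~\ref{prop:Delta_cx_infty_hmgy} equals $\bigoplus_{\mu\vdash n}\mu^{\vee}\otimes L_{\mu}$. A left-exact functor commutes with kernels, so $H^{-n}(K_{n,N}^{\bullet})=\Gamma_N\!\left(\bigoplus_{\mu\vdash n}\mu^{\vee}\otimes L_{\mu}\right)=\bigoplus_{\mu\vdash n}\mu^{\vee}\otimes\Gamma_N(L_{\mu})$, and substituting $\Gamma_N(L_{\mu})=\mu[N]$ (with the convention $\mu[N]=0$ when $N<\abs{\mu}+\mu_1$) yields $\bigoplus_{\mu\vdash n}\mu[N]\otimes\mu^{\vee}$.

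The one subtle point — where I would be careful — is that $\Gamma_N$ is only left exact, so the clean move ``$\Gamma_N$ commutes with $H^{\bullet}$'' is legitimate only in the extremal degree $-n$, where cohomology is a kernel; in all lower degrees the computation must be routed through the derived-functor description of Corollary~\ref{cor:Delta_cx_infty_resol} together with the known formula for $R^{\bullet}\Gamma_N$ on simples, rather than pushing $\Gamma_N$ through the cohomology naively. Everything else is routine bookkeeping with the earlier lemmas.
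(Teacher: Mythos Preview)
Your proposal is correct and follows essentially the same approach as the paper, which treats the corollary as an immediate consequence of the preceding discussion: the identification $K_{\mu,N}^{\bullet}=\Gamma_N(I^{\bullet}_{\mu})$ gives $H^{-k}K_{\mu,N}^{\bullet}=R^{n-k}\Gamma_N(L_{\mu})$, and the rest is read off from the cited formula for $R^{\bullet}\Gamma_N$ on simples. Your write-up simply fills in the details the paper leaves implicit, including the explicit bound $\abs{\mu}-\mu_1-\mu^{\vee}_j+j\le n-1$ for the ``in particular'' clause and the left-exactness argument for the top cohomology (which amounts to $R^0\Gamma_N=\Gamma_N$).
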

This answers the second question posed by Deligne in \cite[(7.13)]{Del07}.

\begin{remark}
 The representation $\bigoplus_{\mu \vdash n} \mu[N] \otimes \mu^{\vee}$ is the maximal direct summand of $\bdel^N_n$ which has no non-trivial maps into $\bdel^N_k$ for any $k<n$ (in the notation of \cite[Section 7]{Del07}, this is precisely $[n]^* \otimes \varepsilon$).
\end{remark}

\subsection{Schur-Weyl duality and the \texorpdfstring{complex $K_{n, N}^{\bullet}$}{Delta complex}}
Consider the (covariant) Schur-Weyl functor $$SW_N = ( - \otimes W^{\otimes N})^{S_N}: \Rep(S_N) \to \Rep(GL(W))_{pol}$$ for some finite-dimensional vector space $W$. We now compute where this functor takes $\Delta_k^N$. 

\begin{lemma}\label{lem:SW_funct_class_Delta}
We have an isomorphism of $S_k \times GL(W)$-modules
 $$SW_N(\Delta_k^N) = (\Delta_k^N \otimes W^{\otimes N})^{S_N} \cong \Sym^{N-k} W \otimes W^{\otimes k}$$
where $S_k$ acts on RHS by permuting the tensor factors.
\end{lemma}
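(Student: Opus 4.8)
The plan is to reduce the statement to classical Schur--Weyl duality together with Pieri's rule. First I would use the decomposition of $\Delta_k^N$ as an $S_N\times S_k$-module provided by Lemma~\ref{lem:Delta_classical_decomp}:
$$\Delta_k^N\;\cong\;\bigoplus_{\substack{\mu\vdash N,\ \lambda\vdash k\\ \lambda\subset\mu,\ \mu/\lambda\in HS}}\mu\otimes\lambda .$$
Applying the functor $SW_N=(-\otimes W^{\otimes N})^{S_N}$ and invoking the Schur--Weyl decomposition $W^{\otimes N}\cong\bigoplus_{\nu\vdash N}\nu\otimes S^{\nu}W$ of $GL(W)\times S_N$-modules, together with the self-duality of the irreducible $S_N$-modules (so that $(\mu\otimes\nu)^{S_N}$ is one-dimensional precisely when $\nu\cong\mu$ and zero otherwise), one obtains $SW_N(\mu\otimes\lambda)\cong S^{\mu}W\otimes\lambda$ and hence
$$SW_N(\Delta_k^N)\;\cong\;\bigoplus_{\substack{\mu\vdash N,\ \lambda\vdash k\\ \lambda\subset\mu,\ \mu/\lambda\in HS}}S^{\mu}W\otimes\lambda$$
as a $GL(W)\times S_k$-module, with $S_k$ acting through the labelling irreducible $\lambda$.

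Next I would expand the right-hand side of the claimed isomorphism. By Schur--Weyl duality for $W^{\otimes k}$, where $S_k$ permutes the tensor factors, we have $W^{\otimes k}\cong\bigoplus_{\lambda\vdash k}S^{\lambda}W\otimes\lambda$, so
$$\Sym^{N-k}W\otimes W^{\otimes k}\;\cong\;\bigoplus_{\lambda\vdash k}\bigl(\Sym^{N-k}W\otimes S^{\lambda}W\bigr)\otimes\lambda .$$
Pieri's rule identifies $\Sym^{N-k}W\otimes S^{\lambda}W$ with $\bigoplus_{\mu}S^{\mu}W$, the sum over the $\mu\vdash N$ with $\lambda\subset\mu$ and $\mu/\lambda$ a horizontal strip (necessarily of size $N-k$). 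Substituting, the right-hand side becomes exactly the double sum displayed above, and the two $S_k$-module structures agree because in both cases $S_k$ acts via the labelling partition $\lambda$. This yields the desired isomorphism; note that terms with $\ell(\mu)>\dim W$ vanish on both sides, so the identity holds for every $W$.

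An alternative, more direct route avoids decomposing into irreducibles: as an $S_N$-module $\Delta_k^N\cong\C\,\Inj([k],[N])\cong\Ind_{S_{N-k}}^{S_N}\C$, since the stabiliser in $S_N$ of a fixed injection $[k]\hookrightarrow[N]$ is a copy of $S_{N-k}$; by the tensor identity and Frobenius reciprocity this gives $SW_N(\Delta_k^N)=(\Ind_{S_{N-k}}^{S_N}\C\otimes W^{\otimes N})^{S_N}\cong(W^{\otimes N})^{S_{N-k}}\cong W^{\otimes k}\otimes\Sym^{N-k}W$, with $S_{N-k}$ permuting the last $N-k$ factors. The one point needing care along this route is to verify that the remaining $S_k$-symmetry of $\Delta_k^N$ --- precomposition of injections --- corresponds under this chain of isomorphisms to the permutation action on the first $k$ tensor factors of $W^{\otimes N}$. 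I expect exactly this bookkeeping of the $S_k$-action, distinguishing the ``external'' $S_k$ acting on $\Delta_k^N$ from the tensor-permutation $S_k$ acting on $W^{\otimes k}$, to be the only genuinely delicate point --- which is why I would present the first route, where Schur--Weyl duality for $W^{\otimes k}$ handles the $S_k$-bookkeeping automatically.
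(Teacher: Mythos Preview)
Your first route is correct and is essentially the character computation the paper invokes at the end of its proof. The paper, however, does more: it first writes down an explicit $S_k\times GL(W)$-equivariant map
\[
f\otimes w_1\otimes\cdots\otimes w_N\;\longmapsto\;\Bigl(\prod_{j\notin\Im(f)}w_j\Bigr)\otimes w_{f(1)}\otimes\cdots\otimes w_{f(k)},
\]
checks that it is surjective and factors through $(\Delta_k^N\otimes W^{\otimes N})^{S_N}$, and only then compares characters to conclude that this explicit map is an isomorphism. Your argument establishes the existence of an isomorphism; the paper pins down a specific one. This matters because the very next lemma (Lemma~\ref{lem:SW_funct_class_res}) computes $SW_N(\res^i)$ by tracing $\res^i$ through exactly this formula, so an abstract isomorphism would not suffice there. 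Your second route, via $\Delta_k^N\cong\Ind_{S_{N-k}}^{S_N}\C$ and Frobenius reciprocity, is in fact closer in spirit to the paper's explicit map (the map above is precisely the adjunction counit made concrete), and the $S_k$-bookkeeping you flag as delicate is transparent once the map is written out: precomposition of $f$ by $\sigma\in S_k$ permutes the factors $w_{f(1)},\ldots,w_{f(k)}$.
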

\begin{proof}
As before, we identify $\Delta^k_N = \mathbb{C} \Inj([k], [N])$. For $f:\{1,\ldots, k\} \hookrightarrow \{1,\ldots, N\}$ and $w_1, \ldots, w_N \in W$ 
we define the following map
\begin{align*}
 \Delta_k^N \otimes W^{\otimes N} &\longrightarrow \Sym^{N-k} W \otimes W^{\otimes k} \\
f \otimes w_{1} \otimes \ldots w_{N} &\longmapsto \left(\prod_{j \notin \Im(f)} w_{j} \right) \otimes w_{{f(1)}} \otimes w_{{f(2)}} \otimes \ldots \otimes w_{{f(k)}}
\end{align*}

One immediately sees that this map is surjective and factors through the invariants $(\Delta_k^N \otimes W^{\otimes N})^{S_N}$. Comparing $S_k \times GL(W)$-characters, we obtain the required isomorphism.
\end{proof}
Denote by $\mu: \Sym(W) \otimes W \to \Sym(W)$ the multiplication map, and for $i \in \{1, \ldots, k\}$, denote by $\mu^{(i)}: \Sym(W) \otimes W^{\otimes k} \to 
\Sym(W)$ the map $\mu$ applied to the $i$-th copy of $W$. We will also denote by $gr_N{\mu}^{(i)}$ the restriction of $\mu^{(i)}$ to the $N$-th graded piece of the graded $GL(W)$-module $\Sym W \otimes W^{\otimes k}$ (here $W$ has degree $1$).
\begin{lemma}\label{lem:SW_funct_class_res}
 We have: $SW_N(\res^i: \Delta_k^N \to \Delta_{k-1}^N) = gr_N{\mu}^{(i)}$.
\end{lemma}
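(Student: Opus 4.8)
The plan is to trace the explicit isomorphism of Lemma~\ref{lem:SW_funct_class_Delta} through the map $\res^i$. Recall that Lemma~\ref{lem:SW_funct_class_Delta} identifies $SW_N(\Delta_k^N) = (\Delta_k^N \otimes W^{\otimes N})^{S_N}$ with $\Sym^{N-k} W \otimes W^{\otimes k}$ via the surjection sending $f \otimes w_1 \otimes \cdots \otimes w_N$ (for $f \in \Inj([k],[N])$, $w_1,\dots,w_N \in W$) to $\big(\prod_{j \notin \Im(f)} w_j\big) \otimes w_{f(1)} \otimes \cdots \otimes w_{f(k)}$, and likewise identifies $SW_N(\Delta_{k-1}^N)$ with $\Sym^{N-k+1} W \otimes W^{\otimes(k-1)}$. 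Since such elements span $\Delta_k^N \otimes W^{\otimes N}$, it suffices to verify that the two composites in question agree on them.

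First I would compute the image of $f \otimes w_1 \otimes \cdots \otimes w_N$ under $\res^i \otimes \id$ followed by the isomorphism for $\Delta_{k-1}^N$. We have $\res^i(f) = f \circ \iota_i$, and since $f$ is injective $\Im(f\circ\iota_i) = \Im(f) \setminus \{f(i)\}$; hence the complement of $\Im(f\circ\iota_i)$ in $[N]$ is the complement of $\Im(f)$ together with the single index $f(i)$, while the sequence $(f\circ\iota_i)(1), \dots, (f\circ\iota_i)(k-1)$ is $f(1), \dots, f(k)$ with the term $f(i)$ deleted. Thus the image is $\big(w_{f(i)} \prod_{j \notin \Im(f)} w_j\big) \otimes w_{f(1)} \otimes \cdots \otimes \widehat{w_{f(i)}} \otimes \cdots \otimes w_{f(k)}$.

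On the other hand, applying first the isomorphism for $\Delta_k^N$ and then $gr_N\mu^{(i)}$ (that is, multiplying the $i$-th tensor factor into the symmetric-power slot) sends $f \otimes w_1 \otimes \cdots \otimes w_N$ first to $\big(\prod_{j \notin \Im(f)} w_j\big) \otimes w_{f(1)} \otimes \cdots \otimes w_{f(k)}$ and then to $\Big(\big(\prod_{j \notin \Im(f)} w_j\big)\, w_{f(i)}\Big) \otimes w_{f(1)} \otimes \cdots \otimes \widehat{w_{f(i)}} \otimes \cdots \otimes w_{f(k)}$. As $\Sym(W)$ is commutative, the two outputs coincide, which gives $SW_N(\res^i) = gr_N\mu^{(i)}$.

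There is no genuine obstacle here: the argument is a direct diagram chase, and the only point requiring attention is the bookkeeping identity $\Im(f\circ\iota_i) = \Im(f)\setminus\{f(i)\}$, which records precisely that the index dropped by $\res^i$ is the one that re-enters the symmetric-power factor under $gr_N\mu^{(i)}$.
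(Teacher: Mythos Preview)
Your proof is correct and follows essentially the same approach as the paper: both arguments trace the explicit isomorphism of Lemma~\ref{lem:SW_funct_class_Delta} through $\res^i$ and verify by direct computation that the resulting map is $gr_N\mu^{(i)}$. Your version is in fact slightly more explicit about the key bookkeeping identity $\Im(f\circ\iota_i) = \Im(f)\setminus\{f(i)\}$, which the paper leaves implicit.
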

\begin{proof}
Consider the map $\Delta_{k-1}^N \otimes W^{\otimes N} \longrightarrow \Sym^{N-k+1} W \otimes W^{\otimes k-1}$ given in the proof of Lemma \ref{lem:SW_funct_class_Delta}. We compose it with the map $\res^i: \Delta_k^N \otimes W^{\otimes N} \to \Delta_{k-1}^N \otimes W^{\otimes N}$ and obtain the induced map $SW_N(\res^i: \Delta_k^N \to \Delta_{k-1}^N)$ given by
\begin{align*}
 \Sym^{N-k} W \otimes W^{\otimes k} &\longrightarrow \Sym^{N-k+1} W \otimes W^{\otimes k-1}, \\
 (w_1^{\alpha_1} w_2^{\alpha_2} \ldots )\otimes w'_1 \otimes \ldots w'_i \ldots \otimes w'_k &\longmapsto  (w_1^{\alpha_1} w_2^{\alpha_2} \ldots  \cdot w'_i )\otimes w'_{1} \otimes \ldots \widehat{w'_i} \ldots \otimes w'_{k}
\end{align*}
This is precisely $ gr_N{\mu}^{(i)}$, which proves the lemma.
\end{proof}

Thus the complex $SW_N(K_{n, N}^{\bullet})$ is isomorphic to the complex $$ \Sym^{N-n} W \otimes \left(W^{\otimes n} \otimes \bdel_n^n \right)^{S_n} \rightarrow   \ldots \to \Sym^{N-1}W \otimes W \to \Sym^N W$$ with degree $(-k)$ given by $\Sym^{N-k} W \otimes \left(W^{\otimes k} \otimes \bdel_k^n \right)^{S_k}$, and the differential $$\partial:\Sym^{N-k} W \otimes \left(W^{\otimes k} \otimes \bdel_k^n \right)^{S_k} \longrightarrow \Sym^{N-k+1} W \otimes \left( W^{\otimes k-1}  \otimes \bdel_{k-1}^n \right)^{S_{k-1}}$$ given by $\sum_i (-1)^i gr_N{\mu}^{(i)} \otimes \res^i$.

We now interpret this complex as part of a larger graded complex, similar to the one we have seen already. Consider the complex $C_W^{\bullet}$ given by $ \Sym(W) \otimes W \to \Sym(W)$ in degrees $0,1$. This is a complex of graded $GL(W)$-equivariant modules over the graded algebra $\Sym(W)$, with $gr(W) =1$. 
\begin{example}
 In grade zero, the complex is $0 \to \mathbb C$; in grade $1$, it is $W \xrightarrow{\sim} W$.
\end{example}

Consider the $n$-th tensor power of the complex $C_W^{\bullet}$ (as a complex of modules over $\Sym(W)$). 

The resulting complex $\left( C_W^{\bullet} \right)^{\otimes_{\Sym(W)} n} $ is isomorphic to the complex $$\Sym W \otimes \left(W^{\otimes n} \otimes \bdel_n^n \right)^{S_n} \rightarrow \ldots \to \Sym W \otimes W \to \Sym W $$ with degree $n-k$ given by $ \Sym W \otimes \left(W^{\otimes k} \otimes \bdel_k^n \right)^{S_k}$ and differential $\partial=\sum_i (-1)^i {\mu}^{(i)} \otimes \res^i$. This complex is graded, with grade $N$ being exactly the complex $SW_N(K_{n, N}^{\bullet})$.

Of course, the complexes $ C_W^{\bullet}$, $\left( C_W^{\bullet} \right)^{\otimes_{\Sym(W)} n} $ are just specializations of the complexes $C^{\bullet}$, $(C^{\bullet})^{\widetilde{\otimes} 
n}$ under the specialization functor $$\V \to \Rep(GL(W))_{pol}, \;\;\; V \mapsto W.$$

In particular, taking the $N$-th grade of the complex $(C^{\bullet})^{\widetilde{\otimes} n}$ corresponds to considering only the $N$-th part of the ${\bf FI}$-module $\Phi(C^{\bullet})^{\widetilde{\otimes} n}$ in the notation of Section \ref{sec:infinity}. This is precisely the complex $K_{n, N}^{\bullet}$.

\section{The \texorpdfstring{complex $K_{n, t}^{\bullet}$ in $\uRep^{ab}(S_t)$}{Delta complex} in the Deligne category}\label{sec:Delta_complex_Deligne}
\subsection{The \texorpdfstring{complex $K_{n, t}^{\bullet}$}{Delta complex}}\label{ssec:Delta_complex_Deligne}
We now define the counterpart $K_{n, t}^{\bullet}$ in $\uRep^{ab}(S_t)$ of $K_{n, \infty}^{\bullet}$:
$$ K_{n, t}^{\bullet} : 0 \to  \left(\bdel_n^n \boxtimes \Delta_n^{t} \right)^{S_n} \to 
\left(\bdel_{n-1}^n \boxtimes  \Delta_{n-1}^{t}\right)^{S_{n-1}}  \to  \ldots 
\to  \Delta_1^{t}  \to   \Delta_0^{t} \to 0 $$
The $(n-k)$-th component of this complex is the $S_n \times 
S_{t}$-module $\left( \bdel_k^n \boxtimes  \Delta_{k}^{t} 
\right)^{S_{k}}$. The differentials in this complex are
$$\partial: \left( \bdel_k^n \boxtimes   \Delta_{k}^{t} \right)^{S_{k}} 
\longrightarrow \left( \bdel_{k-1}^n \boxtimes   \Delta_{k-1}^{t} 
\right)^{S_{k-1}}$$ given by $\partial = \sum_{i=1}^k (-1)^i \res^i \otimes 
\res^i$.

We will show that its cohomology vanishes in degrees higher than $-n$, and compute explicitly its cohomology in degree $-n$. 

\begin{remark}
 A description of a complex very similar to $K_{n, t}^{\bullet}$ in the setting of $\uRep(S_t)$ was given by Deligne in \cite[(7.12)]{Del07} where its Euler characteristic is computed. However, the reader should keep in mind that the complex constructed in \cite[(7.12)]{Del07} is shifted, and hence multiplied by the sign representation of $S_n$.
\end{remark}

Fix a unital vector space $(W,  w_0 \in W\setminus \{0\})$. Let us consider the contravariant Schur-Weyl functor for $(W, w_0)$ from $\uRep^{ab}(S_t)$:
$$ \mathbf{SW}_{t, W}: \uRep^{ab}(S_t)^{op} \longrightarrow \Mod_{\U(\gl(W))}, \;\;\; M \mapsto \Hom_{\uRep^{ab}(S_t)}(M, W^{\underline{\otimes} t}) $$
as defined in Section \ref{ssec:Deligne_SW} and \cite{En}. In fact, the image of this functor lies in the ``mirabolic'' category $\mathcal{O}^{\mathfrak{p}}$ for the Harish-Chandra pair $(\gl(W), \mathfrak{p} = Aut(W, w_0))$. This functor is left-exact, and by the results of \cite{En}, the derived functors $R^i \mathbf{SW}_{t, W}$ applied to $M \in \uRep^{ab}(S_t)$ give finite-dimensional $\gl(W)$-modules. Taking a sequence $U_l = \mathbb{C}^l$ for $l \in \mathbb{Z}_{\geq 0}$, and $W_l = U_l \oplus \mathbb C w_0$, it was explained in Section \ref{ssec:Deligne_SW} and Proposition \ref{prop:SW_Deligne_infty_equivalence} that the functors $ \mathbf{SW}_{t, W_l}$ induce an equivalence between the category $\uRep^{ab}(S_t)^{op}$ and a certain inverse limit of the categories $\mathcal{O}^{\mathfrak{p}_l}$.

In particular, to show that a certain object $M \in \uRep^{ab}(S_t)$ is zero, it is enough to check that for all $l$ large enough, $ \mathbf{SW}_{t, W_l}(M)$ is finite-dimensional. 

So, we will consider the complex $\mathbf{SW}_{t, W}(K_{n, t}^{\bullet})$, whose $k$-th degree is $\mathbf{SW}_{t, W}(K_{n, t}^{-k})$. To show that the cohomology of $K_{n, t}^{\bullet}$ in degrees higher than $(-n)$ vanishes, we just need to show that $\mathbf{SW}_{t, W}(K_{n, t}^{\bullet})$ has finite-dimensional homology in degrees smaller than $n$ when $\dim W >>0$.

\subsection{Image of \texorpdfstring{$K_{n, t}^{\bullet}$}{complex} under Schur-Weyl duality}\label{ssec:Delta_complex_SW}
Fix a finite dimensional unital vector space $W = U \oplus \mathbb C w_0$ with $\dim W >>n$, and let $\mathfrak{p} = Aut(W, w_0)$. 

Consider the composition of the functor $ \mathbf{SW}_{t, W}$ with the restriction functor $\Mod_{\U(\gl(W))} \to \Mod_{\U(\gl(U))}$ to get a functor
$$ \overline{\mathbf{SW}} = Res_{\gl(U)} \mathbf{SW}_{t, W}: \uRep^{ab}(S_t)^{op} \longrightarrow \Ind \mbox{-} \Rep(GL(U))_{pol}$$
 Clearly, $Res_{\gl(U)}$ is faithful and exact, so it is enough to check that $\overline{\mathbf{SW}}(K_{n, t}^{\bullet})$ has finite-dimensional (if fact, we will prove that it is zero) homology in degrees smaller than $n$ when $\dim U >>0$.

In a first step we want to compute the image $\overline{\mathbf{SW}}(K_{n, t}^{\bullet})$  of the Delta complex under $\overline{\mathbf{SW}}$. We abbreviate $A = \Sym(U)$. This is a commutative algebra with a $GL(U)$-action.

\begin{proposition}
 For any $t \in \mathbb{C}$, we have an isomorphism of $GL(U)$-equivariant free $ A$-modules $$\overline{\mathbf{SW}}(\Delta_{k}^{t}) =A \otimes W^{\otimes k}$$ with $W$ considered as a $GL(U)$-module (isomorphic to $U \oplus \mathbb C$), and $A$ acting freely by multiplication on the left. Moreover, this morphism is $S_k$-equivariant, with $S_k$ acting on the RHS by permuting the factors $W$.

In addition, $\mathbf{SW}_{t, W}(\res^i: \Delta_{k}^{t} \to \Delta_{k-1}^{t}) $ is the map $$ \eps^{(i)}: A \otimes W^{\otimes k-1} \longrightarrow A \otimes W^{\otimes k}, \;\;\; \eps^{(i)} = \id \otimes \id^{\otimes i-1}_W \otimes \eps \otimes \id^{\otimes k-i}_W $$ and $\eps: \mathbb{C} \to W$ is the inclusion $1 \mapsto w_0$.
 
\end{proposition}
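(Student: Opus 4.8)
The plan is to evaluate $\mathbf{SW}_{t,W}(\Delta_k^t)=\Hom_{\uRep^{ab}(S_t)}(\Delta_k^t,W^{\underline{\otimes}t})$ directly from the presentation $W^{\underline{\otimes}t}=\bigoplus_{m\geq 0}\bigl(U^{\otimes m}\otimes\Delta_m^t\bigr)^{S_m}$ of \eqref{eq:complex_tens_power}. First I would use that $\Delta_k^t$, being an object of $\uRep(S_t)$, is a compact object of $\uRep^{ab}(S_t)$, so that $\Hom(\Delta_k^t,-)$ commutes with the countable direct sum and with the exact functor of $S_m$-invariants; this gives
\[
\mathbf{SW}_{t,W}(\Delta_k^t)\;\cong\;\bigoplus_{m\geq 0}\bigl(U^{\otimes m}\otimes\Hom_{\uRep(S_t)}(\Delta_k^t,\Delta_m^t)\bigr)^{S_m}.
\]
By Lemma \ref{lem:Delta_k_homs} the inner Hom-space is $\C\bar{P}_{k,m}$, and this identification of the space (as opposed to the composition law) does not involve $t$; so the right-hand side is a $t$-independent $GL(U)\times S_k$-module, which I can therefore compute using the same bookkeeping as in the classical case.

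Next I would carry out the identification $\bigoplus_m\bigl(U^{\otimes m}\otimes\C\bar{P}_{k,m}\bigr)^{S_m}\cong A\otimes W^{\otimes k}$. A partial pairing $\pi\in\bar{P}_{k,m}$ is precisely a partial matching between $[k]$ and $[m]$: a subset $S\subseteq[k]$, a subset $R\subseteq[m]$, and a bijection $S\xrightarrow{\sim}R$. After tensoring with $U^{\otimes m}$ and taking $S_m$-invariants, the $U$-factor over a matched point of $R$ becomes attached to the corresponding point of $S\subseteq[k]$, while the $U$-factors over the $m-|S|$ unmatched points of $[m]$ are symmetrised into a copy of $\Sym^{m-|S|}(U)$; summing over $m\geq|S|$ assembles these into $\Sym(U)=A$. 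Recording for each $i\in[k]$ whether $i$ is matched (a tensor factor $U$) or free (a tensor factor $\C$) then identifies $\bigoplus_{S\subseteq[k]}\bigotimes_{i\in S}U$ with $\bigotimes_{i\in[k]}(\C\oplus U)=W^{\otimes k}$, where $W\cong\C\oplus U$ as a $GL(U)$-module. This yields $\mathbf{SW}_{t,W}(\Delta_k^t)\cong A\otimes W^{\otimes k}$: the $S_k$-action permutes the $k$ tensor factors, the $GL(U)$-action is the evident one on $A$ and on each $W$, and the free $A$-module structure (left multiplication on the $\Sym(U)$ tensorand) is the one induced on $\Hom(\Delta_k^t,-)$ by the degree-raising action of the abelian subalgebra $\simeq U$ of $\gl(W)$ on $W^{\underline{\otimes}t}$.

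Finally, for the claim on $\res^i$ I would precompose with $\res^i\colon\Delta_k^t\to\Delta_{k-1}^t$. In the diagram calculus of Definition \ref{def:res_morphisms_Deligne}, $\res^i$ is the partial pairing whose $i$-th top vertex is a free point and whose remaining top vertices are matched in order to the bottom row; since this diagram contains no arcs joining two top vertices or two bottom vertices, composing it with any basis partial pairing $\Delta_{k-1}^t\to\Delta_m^t$ creates no closed loops, so by Lemma \ref{lem:Delta_k_homs} the composite is again a single basis partial pairing with coefficient $1$ — namely the original pairing with a free point inserted at position $i$ of the top row. Under the identification above, inserting a free point at position $i$ of $[k]$ inserts the $\C w_0\subset W$ slot at the $i$-th tensor factor, which is exactly $\eps^{(i)}=\id_A\otimes\id_W^{\otimes i-1}\otimes\eps\otimes\id_W^{\otimes k-i}$ with $\eps\colon\C\to W$, $1\mapsto w_0$. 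The main difficulty is purely organisational: making the $S_m$-invariants bookkeeping precise enough that the $A$-module, $GL(U)$- and $S_k$-actions all match up, and verifying that the generic polynomial-in-$t$ correction terms of the composition rule in Lemma \ref{lem:Delta_k_homs} genuinely vanish for the restriction diagrams; the whole computation runs parallel to the classical Lemmas \ref{lem:SW_funct_class_Delta} and \ref{lem:SW_funct_class_res}, with $\Sym^{N-k}W$ replaced by its interpolated analogue $A=\Sym(U)$.
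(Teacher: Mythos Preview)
Your proposal is correct and follows essentially the same approach as the paper: both compute $\Hom(\Delta_k^t, W^{\underline{\otimes} t})$ via the decomposition \eqref{eq:complex_tens_power} and the identification of $\Hom(\Delta_k^t,\Delta_m^t)$ with $\C\bar P_{k,m}$, then reassemble to obtain $\Sym(U)\otimes W^{\otimes k}$. The paper phrases the combinatorics slightly more formally---writing $\C\bar{P}_{k,m}\cong\bigoplus_{l\geq 0}(\Delta_l^k\otimes\Delta_l^m)^{S_l}$ and then invoking Lemma~\ref{lem:SW_funct_class_Delta} and \eqref{eq:classical_tens_power}---and opens with a separate paragraph deducing $A$-freeness from the standard-filtration structure of $\mathbf{SW}_{t,W}(\X_\lambda)$ in $\mathcal{O}^{\mathfrak{p}}$, a step your explicit identification makes unnecessary.
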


\begin{proof}
Consider the $\gl(V)$-module $\mathbf{SW}_{t, W}(\X_{\lambda})$ in the parabolic category $\mathcal{O}^{\mathfrak{p}}$ for the Harish-Chandra pair $(\gl(V), \mathfrak{p})$.

 Recall from \cite{En} that when $\dim W >>\abs{\lambda}$, the module $\mathbf{SW}_{t, W}(\X_{\lambda})$ is either isomorphic to a generalized Verma module or is projective in $\mathcal{O}^{\mathfrak{p}}$.  In both cases, it has a standard filtration, and thus is free over the enveloping algebra of the dual unipotent radical $\mathfrak{u}_{\mathfrak{p}}^-$ of $\mathfrak{p}$. Since $\mathfrak{u}_{\mathfrak{p}}^- \cong U$, and $A \cong \U(\mathfrak{u}_{\mathfrak{p}}^-)$, we conclude that the $GL(U)$-equivariant $A$-module $\overline{\mathbf{SW}}(\X_{\lambda})$ is free over $A$. Thus the modules $\mathbf{SW}_{t, W}(\Delta_t^k)$ are free over $A$ as well, for $\dim W >> n \geq k$. 
%
%
%

Consider the $GL(U)$-module $$\overline{\mathbf{SW}}(\Delta_{k}^{t})= \Hom_{\uRep^{ab}(S_t)}(\Delta_{k}^{t}, V^{\underline{\otimes} t})$$ with $V^{\underline{\otimes} t}$ defined as in \eqref{eq:complex_tens_power}. Then
\begin{align*}
& \overline{\mathbf{SW}}(\Delta_{k}^{t})= \Hom_{\uRep^{ab}(S_t)}(\Delta_{k}^{t}, V^{\underline{\otimes} t}) = \bigoplus_{m \geq 0} \Hom_{\uRep(S_t)}\left(\Delta_{k}^{t}, \left( \Delta^t_m \otimes U^{\otimes m}\right)^{S_m}\right) \cong \\
&\cong \bigoplus_{m \geq 0} \left(\C \bar{P}_{k, m} \otimes U^{\otimes m}\right)^{S_m}.
\end{align*}
Here $\C\bar{P}_{k, m} = \Hom_{\uRep(S_t)}(\Delta_{k}^{t},  \Delta^t_m)$ (see Section \ref{ssec:Delta_obj_Deligne}), and $\bar{P}_{k, m}$ is the set of partial pairings of the sets $\{1, \ldots , k \}$, $\{1, \ldots , m\}$ as described in Notation \ref{notn:Delta_maps_Deligne}. The maps $\res^i: \Delta_{k}^{t} \to \Delta_{k-1}^{t}$ induce maps $(\res^i)^*:\C\bar{P}_{k-1, m} \to \C\bar{P}_{k, m}$ given by composition with $\res^i$.

Note that the set of partial pairings $\bar{P}_{k, m}$ is in fact determined by fixing two injections of a set $\{1,\ldots, l\}$ into the sets $\{1, \ldots , k \}$, $\{1, \ldots , m\}$, for some $l \leq k, m$. This gives us a decomposition of $S_k \times S_m$-modules $$\C\bar{P}_{k, m} \cong \bigoplus_{l \geq 0} \left( \Delta^k_l \otimes \Delta^m_l \right)^{S_l}$$ and $(\res^i)^*$ corresponds to a map $$(\res^i)^*:\bigoplus_{l \geq 0} \left( \Delta^{k-1}_l \otimes \Delta^m_l \right) \to \bigoplus_{l \geq 0} \left( \Delta^k_l \otimes \Delta^m_l \right)$$ induced by the map 
$$(\res^i)^*: \Delta^{k-1}_l \to \Delta^{k}_l, \;\; (\res^i)^*(f) = \sum_{\substack{g \in \Inj([l], [k]):\\ g \circ \iota_i =f}} g$$
for any $f \in \Inj( [l],[k-1])$ and $\iota_i$ defined in (\ref{eq:iota}).

Hence
\begin{align*}
& \overline{\mathbf{SW}}(\Delta_{k}^{t})\cong \bigoplus_{m \geq 0} \left(\bigoplus_{l \geq 0} \left( \Delta^k_l \otimes \Delta^m_l \right)^{S_l} \otimes  U^{\otimes m}\right)^{S_m} \cong  \bigoplus_{l \geq 0} \left( \Delta^k_l \otimes\bigoplus_{m \geq 0} \left(\Delta^m_l  \otimes  U^{\otimes m}\right)^{S_m}\right)^{S_l}
\end{align*} 
with $$ \mathbf{SW}_{t, W}(\res^i) = \oplus_{l \geq 0} (\res^i)^* \otimes \id.$$

Now, by Lemma \ref{lem:SW_funct_class_Delta}, $$\bigoplus_{m \geq 0} \left(\Delta^m_l  \otimes  U^{\otimes m}\right)^{S_m} \cong \Sym(U) \otimes U^{\otimes l}$$ so we have an isomorphism of $GL(U) \times S_k$-modules
\begin{align*}
 \overline{\mathbf{SW}}(\Delta_{k}^{t})\cong  \bigoplus_{l \geq 0} \left( \Delta^k_l \otimes \Sym(U) \otimes U^{\otimes l}\right)^{S_l}  \cong \Sym(U) \otimes  \bigoplus_{l \geq 0} \left( \Delta^k_l \otimes  U^{\otimes l}\right)^{S_l} \cong \Sym(U) \otimes  W^{\otimes k}
\end{align*} 
(the last isomorphism is due to \eqref{eq:classical_tens_power}).

As we have seen, $A \cong \Sym(U)$ acts freely on this module by multiplication on the left. This implies the first part of the Proposition. 

For the second part, notice that under the isomorphism $$\Sym(U) \otimes \bigoplus_{l \geq 0} \left( \Delta^k_l \otimes  U^{\otimes l}\right)^{S_l} \cong \Sym(U) \otimes  W^{\otimes k}$$ the maps $ \mathbf{SW}_{t, W}(\res^i) = \oplus_{l \geq 0} (\res^i)^* \otimes \id$ on LHS correspond exactly to the maps $ \eps^{(i)}$ on RHS (cf. \cite[Appendix A.3]{En}).

\end{proof}
\begin{corollary}
  The above complex $\overline{\mathbf{SW}}(K_{n, t}^{\bullet})$ is isomorphic to $\left( C'^{\bullet} \right)^{\otimes_{A} n}$ where $C'^{\bullet} :A \to A \otimes W$ (sitting in degrees $0, 1$) is the complex of of free $A$-modules with the differential given by $\id_{A} \otimes \eps$.
\end{corollary}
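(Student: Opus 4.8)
The plan is to apply the contravariant additive functor $\overline{\mathbf{SW}} = Res_{\gl(U)}\mathbf{SW}_{t, W}$ to $K_{n, t}^{\bullet}$ one term and one differential at a time, feeding in the identifications $\overline{\mathbf{SW}}(\Delta_k^t) \cong A\otimes W^{\otimes k}$ and $\mathbf{SW}_{t,W}(\res^i) = \eps^{(i)}$ supplied by the preceding Proposition, and then to recognize the output via Lemma \ref{lem:tens_pwr_cmplx}(2). No exactness of $\overline{\mathbf{SW}}$ is needed for this; it suffices that $\overline{\mathbf{SW}}$ is $\C$-linear, additive and contravariant, so that it may be applied to a complex formally, sending finite direct sums to finite direct sums and images of idempotents to images of idempotents.

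First I would treat the terms. Each term $K_{n, t}^{-k} = (\bdel_k^n \boxtimes \Delta_k^t)^{S_k}$ is the image of the averaging idempotent $e_{S_k}$ on $\bdel_k^n \otimes_{\C}\Delta_k^t$, which in $\uRep^{ab}(S_t)$ is simply a finite direct sum of copies of $\Delta_k^t$ indexed by a basis of $\bdel_k^n$. Hence $\overline{\mathbf{SW}}$ sends it to $\bigl((\bdel_k^n)^{*}\boxtimes\overline{\mathbf{SW}}(\Delta_k^t)\bigr)^{S_k}$; using that complex representations of symmetric groups are self-dual — concretely, identifying $(\bdel_k^n)^{*}\cong\bdel_k^n$ via the pairing for which the set-theoretic basis $\Inj([k],[n])$ is orthonormal — together with $\overline{\mathbf{SW}}(\Delta_k^t)\cong A\otimes W^{\otimes k}$ (with $A$ acting by left multiplication and $S_k$ permuting the $W$-factors), this becomes the free $A$-module $(A\otimes W^{\otimes k}\boxtimes\bdel_k^n)^{S_k}$.

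Next I would treat the differentials. The differential $\partial = \sum_{i=1}^{k}(-1)^i\res^i\otimes\res^i\colon K_{n, t}^{-k}\to K_{n, t}^{-k+1}$ has the first factor acting on $\bdel_k^n\to\bdel_{k-1}^n$ and the second on $\Delta_k^t\to\Delta_{k-1}^t$. Applying the contravariant $\overline{\mathbf{SW}}$ reverses the arrow and, factorwise, produces on the $\bdel$-side the transpose $(\res^i)^{*}\colon\bdel_{k-1}^n\to\bdel_k^n$ (which, for the orthonormal bases above, is the combinatorial map $f\mapsto\sum_{g\circ\iota_i = f}g$ appearing in the proof of the Proposition) and on the $\Delta$-side the map $\eps^{(i)}\colon A\otimes W^{\otimes k-1}\to A\otimes W^{\otimes k}$. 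Thus, up to the usual reindexing under which a contravariant functor turns a complex into a complex, $\overline{\mathbf{SW}}(K_{n, t}^{\bullet})$ is the complex with $k$-th term $(A\otimes W^{\otimes k}\boxtimes\bdel_k^n)^{S_k}$ and differential $\sum_i(-1)^i\eps^{(i)}\otimes(\res^i)^{*}$.

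Finally, this is exactly the description that Lemma \ref{lem:tens_pwr_cmplx}(2) gives for the $n$-th tensor power over $A$ of $C'^{\bullet}\colon A\xrightarrow{\ \id_A\otimes\eps\ }A\otimes W$: applied in the Karoubi additive symmetric monoidal category of $A$-modules (with $\triv$ replaced by the unit $A$), the $k$-th term of $(C'^{\bullet})^{\otimes_A n}$ is $\bigl((A\otimes W)^{\otimes_A k}\boxtimes\bdel_k^n\bigr)^{S_k}\cong(A\otimes W^{\otimes k}\boxtimes\bdel_k^n)^{S_k}$ with differential $\sum_i(-1)^i\partial^{(i)}\otimes(\res^i)^{*}$ and $\partial^{(i)} = \eps^{(i)}$. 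Matching the two complexes — after the harmless (sign-free, since the $W$-grading here is an ordinary grading, not a super-grading) interchange of the two $\boxtimes$-factors — gives the claimed isomorphism. \emph{The only points requiring care} are that this matching is compatible with the $A$-module structure (freeness of $\overline{\mathbf{SW}}(\Delta_k^t)$ over $A\cong\U(\mathfrak{u}_{\mathfrak{p}}^-)$ with $A$ acting on the left, and $A$-linearity of the $\eps^{(i)}$, both already contained in the Proposition) and that no global $\sgn$-twist is introduced (in contrast to the shifted complex of \cite[(7.12)]{Del07}); all of the genuine input has already been done in the Proposition.
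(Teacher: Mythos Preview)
Your proof is correct and follows exactly the route the paper intends: the corollary is stated there without proof, as an immediate consequence of the preceding Proposition (giving $\overline{\mathbf{SW}}(\Delta_k^t)\cong A\otimes W^{\otimes k}$ and $\mathbf{SW}_{t,W}(\res^i)=\eps^{(i)}$) together with the general form of Lemma~\ref{lem:tens_pwr_cmplx}(2) for the symmetric monoidal category of $A$-modules. Your write-up simply makes explicit the dualization of the multiplicity space $\bdel_k^n$ and the matching of differentials that the paper leaves to the reader.
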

Using the fact that the map $\id_{A} \otimes \eps$ is split injective, we obtain:
\begin{corollary}\label{cor:Delta_cx_SW_t_exact}
 The complex $\overline{\mathbf{SW}}(K_{n, t}^{\bullet})$ has zero homology in all degrees except $n$, and $$H^{n}\overline{\mathbf{SW}}(K_{n, t}^{\bullet}) \cong A \otimes U^{\otimes n}.$$
\end{corollary}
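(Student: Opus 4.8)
The plan is to read off the statement from the preceding corollary, which identifies $\overline{\mathbf{SW}}(K_{n,t}^{\bullet})$ with the $n$-fold tensor power over $A$ of the two-term complex $C'^{\bullet}\colon A\xrightarrow{\,\id_A\otimes\eps\,}A\otimes W$ concentrated in degrees $0$ and $1$. The crucial point is that $\eps\colon\mathbb{C}\to W$, $1\mapsto w_0$, is split injective: the decomposition $W=\mathbb{C}w_0\oplus U$ of $GL(U)$-modules yields a $GL(U)$-equivariant retraction $W\twoheadrightarrow\mathbb{C}w_0$. Tensoring with $A$, this exhibits $C'^{\bullet}$ as a direct sum of complexes of $A$-modules $C'^{\bullet}\cong D^{\bullet}\oplus E^{\bullet}$, where $D^{\bullet}\colon A\xrightarrow{\sim}A\otimes\mathbb{C}w_0$ is an isomorphism placed in degrees $0,1$, and $E^{\bullet}$ is $A\otimes U$ placed in degree $1$ with zero differential. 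In particular $D^{\bullet}$ is null-homotopic, with contracting homotopy given by the inverse isomorphism.

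Next I would compute the tensor power. All terms in sight are free, hence flat, over $A$, so $(-)\otimes_A(-)$ is exact and commutes with finite direct sums; expanding the $n$-fold power of $D^{\bullet}\oplus E^{\bullet}$ gives $(C'^{\bullet})^{\otimes_A n}\cong\bigoplus_{S\subseteq\{1,\dots,n\}}F_S^{\bullet}$, where $F_S^{\bullet}$ is the tensor product over $A$ of $n$ factors, the $i$-th factor being $D^{\bullet}$ for $i\in S$ and $E^{\bullet}$ for $i\notin S$. Whenever $S\neq\varnothing$, the complex $F_S^{\bullet}$ has a tensor factor equal to the null-homotopic complex $D^{\bullet}$; since a tensor product over $A$ of a null-homotopic complex with any complex of $A$-modules is again null-homotopic (tensor the contracting homotopy with the identity), every such $F_S^{\bullet}$ is acyclic. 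Thus the only summand contributing to cohomology is $F_\varnothing^{\bullet}=(E^{\bullet})^{\otimes_A n}$, which is the complex with $(A\otimes U)^{\otimes_A n}$ in degree $n$ and zero elsewhere.

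Finally, associativity of $\otimes_A$ together with $(A\otimes U)\otimes_A(A\otimes U')\cong A\otimes(U\otimes U')$ for free $A$-modules gives $(A\otimes U)^{\otimes_A n}\cong A\otimes U^{\otimes n}$, whence $H^k\overline{\mathbf{SW}}(K_{n,t}^{\bullet})=0$ for $k\neq n$ and $H^n\overline{\mathbf{SW}}(K_{n,t}^{\bullet})\cong A\otimes U^{\otimes n}$, with the $GL(U)$-action as stated. I do not anticipate a real obstacle here; the only points needing (routine) care are matching the grading conventions inherited from the previous corollary — so that the top cohomological degree is indeed $n$ — and checking that the splitting of $\eps$ and all the identifications above are $GL(U)$-equivariant, which they are because $W=\mathbb{C}w_0\oplus U$ is a decomposition of $GL(U)$-modules.
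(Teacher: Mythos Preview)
Your proposal is correct and follows exactly the approach the paper intends: the paper's entire argument is the single line ``Using the fact that the map $\id_{A} \otimes \eps$ is split injective, we obtain'' preceding the corollary, and you have simply filled in the routine details of that split-injectivity argument. The decomposition $C'^{\bullet}\cong D^{\bullet}\oplus E^{\bullet}$ and the null-homotopy reasoning are precisely what the paper is leaving implicit.
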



\begin{proposition}\label{prop:Delta_cx_t_hmgy}
 For $k<n$, $H^{-k} K_{n, t}^{\bullet}=0$, and we have an isomorphism of $S_n$-modules in $\uRep^{ab}(S_t)$ $$H^{-n}(K_{n, t}^{\bullet}) \cong \bigoplus_{\mu \vdash n}\mu^{\vee} \otimes \mathbf{M}_{\mu}.$$ 
\end{proposition}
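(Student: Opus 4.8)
The plan is to run the same argument as in the proof of Proposition~\ref{prop:Delta_cx_infty_hmgy}, transporting everything through the contravariant Schur--Weyl functor, with the standard object $\mathbf{M}_\mu$ in $\uRep^{ab}(S_t)$ playing the role that the simple object $L_\mu$ plays in $\Rep(S_\infty)$. The two ingredients already in place are Corollary~\ref{cor:Delta_cx_SW_t_exact} (which computes the cohomology of $\overline{\mathbf{SW}}(K_{n,t}^\bullet)$: it is $A\otimes U^{\otimes n}$, concentrated in degree $n$) and Corollary~\ref{cor:Delta_t_decomp_standard} (which says $\Delta_k^t$, hence every term of $K_{n,t}^\bullet$, is standardly filtered, with an explicit list of standard subquotients after applying $\Hom_{S_k}(\mu,-)$).

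\emph{Step 1: vanishing of $H^{-k}$ for $k<n$.} Fix $k\neq n$ and a unital vector space $W=U\oplus\C w_0$ with $\dim W\gg n$. Since $\operatorname{Res}_{\gl(U)}$ is exact and faithful, Corollary~\ref{cor:Delta_cx_SW_t_exact} gives $H^k\big(\mathbf{SW}_{t,W}(K_{n,t}^\bullet)\big)=0$; applying the exact localization $\widehat{\pi}$ yields $H^k\big(\widehat{SW}_{t,W}(K_{n,t}^\bullet)\big)=0$. By Theorem~\ref{SW_almost_equiv} the functor $\widehat{SW}_{t,W}$ is exact and contravariant, hence commutes with cohomology up to reindexing: $\widehat{SW}_{t,W}\big(H^{-k}(K_{n,t}^\bullet)\big)\cong H^k\big(\widehat{SW}_{t,W}(K_{n,t}^\bullet)\big)=0$. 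This holds for all sufficiently large $W=W_l$, so by the criterion recalled in Section~\ref{ssec:Delta_complex_Deligne} (an object of $\uRep^{ab}(S_t)$ whose image under every $\mathbf{SW}_{t,W_l}$ is finite-dimensional is zero — a consequence of Proposition~\ref{prop:SW_Deligne_infty_equivalence}) we conclude $H^{-k}(K_{n,t}^\bullet)=0$.

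\emph{Step 2: identification of $H^{-n}$.} Now the complex $K_{n,t}^\bullet$, and for each $\mu\vdash n$ the complex $\Hom_{S_n}(\mu^\vee,K_{n,t}^\bullet)$, is exact except in degree $-n$. After decomposing $\bdel_k^n$ by Lemma~\ref{lem:Delta_classical_decomp}, each term $\Hom_{S_n}(\mu^\vee,K_{n,t}^{-k})$ is a direct sum of the standardly filtered objects $\Hom_{S_k}(\lambda,\Delta_k^t)$, hence standardly filtered. Breaking the exact part of the complex into short exact sequences and using that in a highest weight category a subobject $A\subseteq B$ with $B$ and $B/A$ standardly filtered is itself standardly filtered (since $\Ext^1(A,-)$ into any costandard object is squeezed between $\Ext^1(B,-)=0$ and $\Ext^2(B/A,-)=0$), one gets that $\Hom_{S_n}(\mu^\vee,H^{-n}(K_{n,t}^\bullet))$ is standardly filtered. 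Its class in the Grothendieck group equals $(-1)^n$ times the Euler characteristic of $\Hom_{S_n}(\mu^\vee,K_{n,t}^\bullet)$, which — expanding each term via Lemma~\ref{lem:Delta_classical_decomp} and the standard subquotients of $\Delta_k^t$ from Corollary~\ref{cor:Delta_t_decomp_standard}, then running the identical telescoping computation $\sum_l(-1)^l\binom{s}{l}=\delta_{s,0}$ as in the proof of Proposition~\ref{prop:Delta_cx_infty_hmgy} — evaluates to $[\mathbf{M}_\mu]$. A standardly filtered object whose Grothendieck class is a single $[\mathbf{M}_\mu]$ must be isomorphic to $\mathbf{M}_\mu$; assembling the $S_n$-isotypic pieces gives $H^{-n}(K_{n,t}^\bullet)\cong\bigoplus_{\mu\vdash n}\mu^\vee\otimes\mathbf{M}_\mu$.

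\emph{Main obstacle.} The delicate point is Step~1: because $\mathbf{SW}_{t,W}$ is only left exact and, for any single $W$, far from faithful, one cannot directly read off vanishing of cohomology in $\uRep^{ab}(S_t)$ from vanishing of cohomology of the image complex. The remedy is to pass to the exact functor $\widehat{SW}_{t,W}$ for the whole family $W=W_l$ and invoke the restricted-inverse-limit equivalence of Proposition~\ref{prop:SW_Deligne_infty_equivalence}. Everything else — the $\Ext$-squeeze showing the cohomology is standardly filtered, and the Euler-characteristic bookkeeping — is routine and parallels the $\Rep(S_\infty)$ computation essentially verbatim.
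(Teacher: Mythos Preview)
Your proof is correct, and Step~1 is essentially identical to the paper's argument. Step~2 is where you diverge. After computing the same Euler characteristic $[\mathbf{M}_{\mu}]$ for $\Hom_{S_n}(\mu^{\vee},K_{n,t}^{\bullet})$, the paper does \emph{not} argue that the top cohomology is standardly filtered in general; instead it performs a block-by-block case analysis. When $\mathbf{L}_{\mu}$ lies in a semisimple block (or is the minimal element $\tau^{(0)}$ of a non-semisimple block) one has $\mathbf{M}_{\mu}=\mathbf{L}_{\mu}$ and the Euler characteristic already pins down the object. In the remaining case the paper invokes Lemma~\ref{lem:Delta_t_decomp} and Remark~\ref{rmk:Delta_t_decomp} to see that the top term $\Hom_{S_n}(\mu^{\vee},K_{n,t}^{-n})$ has a unique indecomposable summand in the relevant block, namely the projective $\mathbf{P}_{\tau^{(i-1)}}=\X_{\tau^{(i)}}$, and then identifies $H^{-n}$ as the unique subobject of $\mathbf{P}_{\tau^{(i-1)}}$ with the prescribed composition factors $\mathbf{L}_{\tau^{(i)}},\mathbf{L}_{\tau^{(i-1)}}$, which is $\mathbf{M}_{\tau^{(i)}}$.

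Your route --- breaking the acyclic tail into short exact sequences, using $\Ext^{\geq 1}(\text{standardly filtered},\nabla)=0$ to propagate the standard-filtration property to each kernel and finally to $H^{-n}$, and then reading off the object from its class in the $[\mathbf{M}_\lambda]$-basis of the Grothendieck group --- is cleaner and avoids the explicit block combinatorics. It uses only the general machinery of highest weight categories plus Corollary~\ref{cor:Delta_t_decomp_standard}. The paper's approach, by contrast, is more hands-on and extracts slightly more: it exhibits $H^{-n}$ concretely as a submodule of an explicit indecomposable tilting object, which may be useful if one wants the actual embedding rather than just the isomorphism type.
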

Here $\mathbf{M}_{\mu}$ is the standard module corresponding to $\mu$ in the highest weight category $\uRep^{ab}(S_t)$ (see Section \ref{ssec:Deligne_abelian_struct}).

\begin{remark}
 In particular, this answers the first question posed by Deligne in \cite[(7.13)]{Del07}. 
\end{remark}

\begin{proof}
As it was explained in Section \ref{ssec:Deligne_SW} and Proposition \ref{prop:SW_Deligne_infty_equivalence}, since the cohomology of $\overline{\mathbf{SW}}(K_{n, t}^{\bullet})$ vanishes in degrees less than $n$, we have $H^{-k} K_{n, t}^{\bullet}=0$ for $k<n$. 

For the second part, we compute the $(-n)$-th cohomology of $K_{n, t}^{\bullet}$. We first analyze its decomposition in the Grothendieck group. For any $\tau$ and $0 \leq k \leq n$, we define $${a}_{\mu, \tau}^k =\sharp \{\lambda \vdash k: \tau\subset \lambda\subset\mu^{\vee}, \, \mu^{\vee} / \lambda \in VS, \, \lambda / \tau \in HS \}.$$
By Lemmas \ref{lem:Delta_classical_decomp}, \ref{cor:Delta_t_decomp_standard} we have the following decomposition in the Grothendieck group of $\uRep^{ab}(S_t)$ for any $k \geq 0$ and any $\mu\vdash n$:
\begin{equation}\label{eq:decomp_cx_Deligne}
\Hom_{S_n}(\mu, K_{n, t}^{-k}) = \Hom_{S_n}(\mu, (\bdel^n_k \otimes \Delta_k^t)^{S_k}) \cong \sum_{\tau}  a_{\mu, \tau}^k [\mathbf{M}_{\tau}]
\end{equation}


Let us compare this with the Euler characteristic $\chi$ of the complex $\Hom_{S_n}(\mu,K_{n, t}^{\bullet})$. By \eqref{eq:decomp_cx_Deligne} the following equality holds in the Grothendieck group of $\uRep^{ab}(S_t)$:
\begin{equation}\label{eq:Euler_char_cx_t}
 \chi(\Hom_{S_n}(\mu,K_{n, t}^{\bullet})) = \sum_{\tau} \sum_{k \geq 0} (-1)^{-k} a_{\mu, \tau}^k [\mathbf{M}_{\tau}]
\end{equation}

Recall from the proof of Proposition \ref{prop:Delta_cx_infty_hmgy} that for any $\mu \vdash n$ and any $\tau$, $$\sum_{k \geq 0} (-1)^{-k} {a}^k_{\mu, \tau} = (-1)^n \delta_{\mu^{\vee}, \tau}.$$

Thus in the Grothendieck group of $\uRep^{ab}(S_t)$, we have: $$\chi(\Hom_{S_n}(\mu,K_{n, t}^{\bullet})) = [\mathbf{M}_{\mu^{\vee}}].$$ 

We now check that $$\Hom_{S_n}(\mu,H^{-n} K_{n, t}^{\bullet}) \cong \mathbf{M}_{\mu^{\vee}}.$$ This is clear whenever $ \mathbf{L}_{\mu^{\vee}}$ sits in a semisimple block, since in this case, $ \mathbf{M}_{\mu^{\vee}} =  \mathbf{L}_{\mu^{\vee}}$.

Now assume $ \mathbf{L}_{\mu^{\vee}}$ sits in a non-semisimple block corresponding to partition $\tau \vdash t$, and $\mu^{\vee} = \tau^{(i)}$ for some $i\geq 0$. If $i=0$, then 
$ \mathbf{M}_{\mu^{\vee}} =  \mathbf{L}_{\mu^{\vee}}$ and again we are done.

Assume that $i>0$. Then we have seen that $\Hom_{S_n}(\mu,H^{-n} K_{n, t}^{\bullet})$ is an extension of $\mathbf{L}_{\tau^{(i)}}$, $\mathbf{L}_{\tau^{(i-1)}}$. 
Now, by Lemma \ref{lem:Delta_t_decomp}, $$\Hom_{S_n}(\mu,K_{n, t}^{\bullet}) \cong \sum_{ \tau' \in B^t_{\mu^{\vee}}} [\X_{\tau'}]$$
In particular, by Remark \ref{rmk:Delta_t_decomp}, $\Hom_{S_n}(\mu,K_{n, t}^{\bullet})$ contains exactly one indecomposable summand which lies in the block corresponding to $\tau$, and that is $\X_{\tau^{(i)}} = \mathbf{P}_{\tau^{(i-1)}}$ (the injective hull and projective cover of $\mathbf{L}_{\tau^{(i-1)}}$). 

Hence $\Hom_{S_n}(\mu,H^{-n} K_{n, t}^{\bullet})$ is an extension of $\mathbf{L}_{\tau^{(i)}}$, $\mathbf{L}_{\tau^{(i-1)}}$, and a submodule of $\mathbf{P}_{\tau^{(i-1)}}$. There is only one such submodule, and this is $ \mathbf{M}_{\mu^{\vee}} = \mathbf{M}_{\tau^{(i)}}$, which implies:

$$\Hom_{S_n}(\mu,H^{-n} K_{n, t}^{\bullet}) = \mathbf{M}_{\mu^{\vee}} $$ for any $\mu \vdash n$. This completes the proof of the proposition.

\end{proof}


\section{Proof of \texorpdfstring{Theorem \ref{introthrm:main}}{the main Theorem}}\label{sec:proof_main}
In this section, we prove Theorem \ref{introthrm:main}. Let us state it more explicitly:
\begin{theorem}\label{thrm:main}
\mbox{}
\begin{itemize}
 \item The SM functor $\mathbf{\Gamma}_t: \Rep(S_{\infty}) \longrightarrow \uRep^{ab}(S_t)$ is 
faithful and exact.
 \item On objects, the functor $\mathbf{\Gamma}_t$ acts by $$\mathbf{\Gamma}_t(L_{\lam}) \cong 
\mathbf{M}_{\lam}, \;\;\; \mathbf{\Gamma}_t(Q_{\lam}) \cong \bigoplus_{\mu \in B^t_{\lam}} 
\X_{\mu}  $$ where $\mathbf{M}_{\lam}$ is the standard object corresponding to 
$\lam$ ($\mathbf{M}_{\lam} = \X_{\lam}$ if $t \notin \Z_{\geq 0}$), and 
$$B^t_{\lam} = \{ \mu \subset\lam \rvert \lam / \mu \in HS, \; \text{ and 
either } \; \X_{\mu} \text{ is in a semisimple block, or } \mu = \mu'^{(i)}, \; 
\mu'^{(i+1)} \not\subset \lam\}$$ (notice that for $t \notin \Z_{\geq 0}$, 
this reduces to $B^t_{\lam} = \{ \mu \subset\lam \rvert \lam / \mu \in HS 
\}$).
\end{itemize}
\end{theorem}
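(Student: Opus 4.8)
The plan is to extract everything from the Delta complexes and the correspondence lemmas already in hand.

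\textbf{Step 1 (injectives).} First I would compute $\mathbf{\Gamma}_t$ on the indecomposable injectives $Q_\lambda$. By Lemma~\ref{lem:Delta_decomposition_infty}, for $k=\abs{\lambda}$ the object $Q_\lambda$ is the $\lambda$-multiplicity space $\Hom_{S_k}(\lambda,\Delta_k^{\infty})$ inside the $S_k\times S_{\infty}$-module $\Delta_k^{\infty}$. Since $\mathbf{\Gamma}_t$ is additive and, by the commuting diagram of Lemma~\ref{lem:Delta_objects_corresp}, intertwines the two $S_k$-actions, it commutes with the operation $\Hom_{S_k}(\lambda,-)$ of cutting out a multiplicity space (this is the image of an idempotent in $\C[S_k]$). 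Hence $\mathbf{\Gamma}_t(Q_\lambda)\cong \Hom_{S_k}(\lambda,\Delta_k^{t})$, which by Lemma~\ref{lem:Delta_t_decomp} equals $\bigoplus_{\mu\in B^t_\lambda}\X_\mu$. In particular $\mathbf{\Gamma}_t$ sends injectives to tilting objects of $\uRep(S_t)\subset\uRep^{ab}(S_t)$.

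\textbf{Step 2 (the injective resolution).} Fix $\lambda\vdash n$ and recall from Corollary~\ref{cor:Delta_cx_infty_resol} that $I^{\bullet}_{\lambda}=\Hom_{S_n}(\lambda^{\vee},K_{n,\infty}^{\bullet})$ is an injective resolution of $L_\lambda$ in $\Rep(S_{\infty})$. Applying $\mathbf{\Gamma}_t$ termwise and using Lemmas~\ref{lem:Delta_objects_corresp} and~\ref{lem:res_morph_corresp} (so that $\Delta_k^{\infty}\mapsto\Delta_k^{t}$ and $\res^i\mapsto\res^i$, matching the differentials $\sum_i(-1)^i\res^i\otimes\res^i$ on the nose), together with the $S_n$-equivariance of $\mathbf{\Gamma}_t$ on these objects, I obtain an isomorphism of complexes $\mathbf{\Gamma}_t(I^{\bullet}_{\lambda})\cong\Hom_{S_n}(\lambda^{\vee},K_{n,t}^{\bullet})$. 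As $I^{\bullet}_{\lambda}$ consists of injectives, $R^j\mathbf{\Gamma}_t(L_\lambda)=H^j\bigl(\mathbf{\Gamma}_t(I^{\bullet}_{\lambda})\bigr)$; and by Proposition~\ref{prop:Delta_cx_t_hmgy} the complex $K_{n,t}^{\bullet}$ has cohomology only in top degree $-n$, equal to $\bigoplus_{\mu\vdash n}\mu^{\vee}\otimes\mathbf{M}_\mu$. Applying $\Hom_{S_n}(\lambda^{\vee},-)$ isolates the summand $\mu=\lambda$, so $R^j\mathbf{\Gamma}_t(L_\lambda)=0$ for $j>0$ and $\mathbf{\Gamma}_t(L_\lambda)\cong\mathbf{M}_\lambda$ (which is $\X_\lambda$ when $t\notin\Z_{\geq 0}$). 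This already establishes the ``objects'' part of the theorem.

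\textbf{Step 3 (exactness and faithfulness).} It remains to propagate the vanishing $R^{>0}\mathbf{\Gamma}_t(L_\lambda)=0$ to all of $\Rep(S_{\infty})$. For this I would use that every object of $\Rep(S_{\infty})\cong\Mod_K$ has finite length (any finitely generated $A$-module is a quotient of a finite sum of the $A\otimes S_{\lambda_i}$, which localize to the finite-length objects $Q_{\lambda_i}$). An induction on length via the long exact sequence then gives $R^j\mathbf{\Gamma}_t(M)=0$ for all $j\geq1$ and all $M$; since $\mathbf{\Gamma}_t$ is left exact, it is exact. Finally, an exact additive functor between abelian categories whose objects all have finite length is faithful as soon as it is nonzero on every simple object, and $\mathbf{\Gamma}_t(L_\lambda)=\mathbf{M}_\lambda\neq0$; hence $\mathbf{\Gamma}_t$ is faithful.

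\textbf{Main obstacle.} The genuine mathematical content (acyclicity of $K_{n,t}^{\bullet}$ away from top degree, computation of its top cohomology) is carried by Proposition~\ref{prop:Delta_cx_t_hmgy}, which is already proved. Within this proof the delicate points are the bookkeeping in Step~2 --- checking that $\mathbf{\Gamma}_t$ really commutes with the $S_k$- and $S_n$-equivariant idempotent constructions so that $\mathbf{\Gamma}_t(I^{\bullet}_{\lambda})=\Hom_{S_n}(\lambda^{\vee},K_{n,t}^{\bullet})$ as complexes of $\uRep^{ab}(S_t)$ --- and the step from vanishing on simples to vanishing on all objects, which genuinely relies on the finite-length property of $\Rep(S_{\infty})$.
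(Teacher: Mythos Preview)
Your proposal is correct and follows essentially the same route as the paper's proof: compute $\mathbf{\Gamma}_t(Q_\lambda)$ via Lemmas~\ref{lem:Delta_decomposition_infty}, \ref{lem:Delta_objects_corresp}, \ref{lem:Delta_t_decomp}; identify $\mathbf{\Gamma}_t(I^{\bullet}_\lambda)$ with $\Hom_{S_n}(\lambda^{\vee},K_{n,t}^{\bullet})$ using Lemmas~\ref{lem:Delta_objects_corresp}, \ref{lem:res_morph_corresp}; invoke Proposition~\ref{prop:Delta_cx_t_hmgy} to get $R^{>0}\mathbf{\Gamma}_t(L_\lambda)=0$ and $\mathbf{\Gamma}_t(L_\lambda)=\mathbf{M}_\lambda$; and conclude faithfulness from exactness plus nonvanishing on simples. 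Your Step~3 makes explicit the finite-length induction that the paper leaves implicit in the phrase ``we only need to check \ldots\ for every simple object,'' but this is the same argument.
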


\begin{proof}
Fix $\lambda$ to be a partition, and let $n = \abs{\lambda}$.

First, we prove that $\mathbf{\Gamma}_t(Q_{\lam}) \cong \bigoplus_{\mu \in B^t_{\lam}} \X_{\mu}  $. Indeed, by Lemma \ref{lem:Delta_decomposition_infty}, $Q_{\lam} \cong \Hom_{S_n}(\lambda, \Delta_n^{\infty})$. Now, by Lemmas \ref{lem:Delta_objects_corresp} and \ref{lem:Delta_t_decomp} respectively, we have
$$\mathbf{\Gamma}_t(Q_{\lam}) \cong \Hom_{S_n}(\lambda, \Delta_n^{t}) \cong \bigoplus_{\mu \in B^t_{\lam}} \X_{\mu}. $$
Next, we show that the functor $\mathbf{\Gamma}_t$ is exact. In order to do this, we only need to check that for every simple object $L_{\lambda}$ in $\Rep(S_{\infty})$ there is an injective resolution $L_{\lambda} \to I^{\bullet}_{\lambda}$ such that the cohomology of $\mathbf{\Gamma}_t(I^{\bullet}_{\lambda})$ vanishes in degrees higher than $-n$. 

Recall from Corollary \ref{cor:Delta_cx_infty_resol} that $I^{\bullet}_{\lambda}=\Hom_{S_n}( \lambda^{\vee}, K_{n, \infty}^{\bullet})$ is an injective resolution of $L_{\lambda}$. By Lemmas \ref{lem:Delta_objects_corresp}, \ref{lem:res_morph_corresp}, $\mathbf{\Gamma}_t(K_{n, \infty}^{\bullet})$ is isomorphic to the complex $K_{n, t}^{\bullet}$; the cohomology of the latter vanishes in degrees higher than $-n$ by Proposition \ref{prop:Delta_cx_t_hmgy}. Thus $\mathbf{\Gamma}_t$ is exact.

Moreover, Proposition \ref{prop:Delta_cx_t_hmgy} also tells us that $H^{-n} \left( \Hom_{S_n}( \lambda^{\vee}, K_{n, t}^{\bullet}) \right) = \mathbf{M}_{\lam}$, which implies that $\mathbf{\Gamma}_t(L_{\lam}) \cong 
\mathbf{M}_{\lam}$.

In particular, $\mathbf{\Gamma}_t(L_{\lam}) \neq 0$ for any $\lambda$; an exact functor which does not annihilate any simple object is necessarily faithful, which completes the proof of the theorem.

\end{proof}

\begin{remark} The theorem implies that the essential image of the functor $\mathbf{\Gamma}_t$ consists of standardly-filtered
objects. It has been pointed out to us that there is a similarity between this result and Putman's theorem \cite[Theorem E]{Putman} on the Specht stability of centrally stable sequences. It would be interesting to develop an explicit connection between these results by constructing a correspondence between the algebraic representations of $S_{\infty}$ and the Specht-stable sequences of $S_n$-representations over $\mathbb{F}_p$ via a reduction of sorts, and then using Harman's methods \cite{Harman} to see that the latter correspond to standardly filtered objects in $\uRep^{ab}(S_t)$. We do not know, however, a functorial way to do this, and do not expect this method to give an alternative proof of our results.
\end{remark}

\begin{remark} As shown in remark \ref{rem:not-injective}, the functor $\mathbf{\Gamma}_t$ is not injective on objects. Here is a way to produce many examples. By \cite{SS-2} $\dim \Ext_{\Rep(S_{\infty})}^1(L_{\lambda},L_\mu) = 1$ if and only if $\lambda / \mu$ is a vertical strip (and $0$ otherwise). Now take $L_{\lambda}, L_{\mu}$ in $\Rep (S_{\infty})$ such that $\lambda / \mu$ is a vertical strip and denote by $E$ the corresponding non-split extension in $\Rep (S_{\infty})$. Then we choose $t$ in such a way that $\mathbf{\Gamma}_t (L_{\lambda})$ is a semisimple object in $\uRep^{ab}(S_d)$. Then $\mathbf{\Gamma}_t (E) = \mathbf{\Gamma}_t (L_{\lambda} \oplus L_{\mu})$. Alternatively we could take a simple object in $\Rep(S_{\infty})$ with many extensions (e.g. $\lambda = (5,4,3,2,1)$ with non-split extensions with 5 other simple modules). Then some of these extensions have to decompose under $\mathbf{\Gamma}_t$. Note also that $\Rep( S_{\infty})$ has wild representation type whereas $\uRep^{ab}(S_t)$ has tame representation type by \cite{CO, En}.
\end{remark}

\begin{remark} \label{rem:standard} The essential image of $\mathbf{\Gamma}_t$ does not contain all standardly filtered objects. Each block contains standardly filtered indecomposable objects of Loewy length 2 with standard objects $\mathbf{M}_{\tau^{(i)}}, \mathbf{M}_{\tau^{(i+2)}}, \ldots, \mathbf{M}_{\tau^{(i+2r)}}$ for some $i,r$. Consider as an example the indecomposable object $I$ in a block $\{ \tau^{(i)} \}_{i \geq 0}$ of $\uRep^{ab}(S_t)$ for $t \in \Z_{\geq 0}$ with socle layers (socle below, top above)

\[ \begin{pmatrix} \mathbf{L}_{\tau^{(0)}} & & \mathbf{L}_{\tau^{(2)}} \\ & \mathbf{L}_{\tau^{(1)}} & \end{pmatrix}. \]

Then it has a standard filtration with the standard objects $\mathbf{L}_{\tau^{(0)}} = \mathbf{M}_{\tau^{(0)}}$ and $\mathbf{M}_{\tau^{(2)}}$. If it would be in the image of $\mathbf{\Gamma}_t$, it should therefore come from an extension $E$ in $\Rep (S_{\infty})$

\[ \xymatrix{ 0 \ar[r] & L_{\tau^{(0)}} \ar[r] & E \ar[r] & L_{\tau^{(2)}} \ar[r] & 0 } \]

Let us now assume that $t=3$. Then one has 3 non-semisimple blocks, one of them starting with $\tau^{(0)} = \emptyset, \tau^{(1)} = (4), \tau^{(2)} = (4,1)$ \cite[Example 5.10]{CO}. But then the above extension has to split in $\Rep (S_{\infty})$ since $\tau^{(2)} / \tau^{(0)}$ is not a vertical strip and hence there is no non-split extension between $L_{\tau^{(2)}}$ and $L_{\tau^{(0)}}$ in $\Rep (S_{\infty})$. Hence $\mathbf{\Gamma}_3 (E) = \mathbf{M}_{\tau^{(0)}} \oplus \mathbf{M}_{\tau^{(2)}}$ and the indecomposable object $I$ cannot be the image of an indecomposable element under $\mathbf{\Gamma}_3$. 
\end{remark}

\begin{remark} Consider $Q_{\emptyset} \in \Rep (S_{\infty})$ and its image under $\mathbf{\Gamma}_3$. Then $\mathbf{\Gamma}_3 (Q_{\emptyset})$ has $ X_{\emptyset} \cong \triv$ as a direct summand. Since $\triv$ is not injective nor projective in $\uRep^{ab}(S_3)$, we conclude that $\mathbf{\Gamma}_t$ does not necessarily preserve injective objects. 
\end{remark}

\section{Structure constants in the Grothendieck ring of \texorpdfstring{$\uRep^{ab}(S_t)$}{Deligne category}}\label{sec:Grothendieck}

Consider the following corollary of Theorem \ref{thrm:main}:
\begin{corollary}
 The subcategory of standardly-filtered objects in $\uRep^{ab}(S_t)$ is closed under tensor products, and the multiplicity of $\mathbf{M}_{\tau}$ in the standard filtration of 
 $\mathbf{M}_{\lambda} \otimes \mathbf{M}_{\mu}$ is the reduced Kronecker coefficient $\bar{g}^{\tau}_{\lambda, \mu}$. 
\end{corollary}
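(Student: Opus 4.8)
The plan is to deduce everything from Theorem \ref{thrm:main}: the functor $\mathbf{\Gamma}_t$ is exact, faithful and symmetric monoidal, and $\mathbf{\Gamma}_t(L_\nu)\cong\mathbf{M}_\nu$ for every partition $\nu$. First I would reduce the closure statement to the single object $\mathbf{M}_\lambda\otimes\mathbf{M}_\mu$. Given two standardly-filtered objects $X,Y$, choose standard filtrations of each; since the tensor product of $\uRep^{ab}(S_t)$ is biexact, $X\otimes Y$ acquires a filtration whose successive quotients are of the form $\mathbf{M}_{\lambda_i}\otimes\mathbf{M}_{\mu_j}$. In a highest weight category an extension of a $\Delta$-filtered object by a $\Delta$-filtered object is again $\Delta$-filtered (see \cite{CPS}), so it suffices to show that each $\mathbf{M}_\lambda\otimes\mathbf{M}_\mu$ is standardly filtered.

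Next, using that $\mathbf{\Gamma}_t$ is monoidal and $\mathbf{\Gamma}_t(L_\nu)\cong\mathbf{M}_\nu$, I would write
\[ \mathbf{M}_\lambda\otimes\mathbf{M}_\mu \;\cong\; \mathbf{\Gamma}_t(L_\lambda)\otimes\mathbf{\Gamma}_t(L_\mu) \;\cong\; \mathbf{\Gamma}_t(L_\lambda\otimes L_\mu). \]
The object $L_\lambda\otimes L_\mu$ has finite length in $\Rep(S_\infty)$, and by the very definition of the stable (reduced) Kronecker coefficients its Jordan--Hölder multiplicities are $[L_\lambda\otimes L_\mu:L_\tau]=\bar g^{\tau}_{\lambda,\mu}$ (see the introduction and \cite{SS}). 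Applying the exact functor $\mathbf{\Gamma}_t$ to a composition series of $L_\lambda\otimes L_\mu$ then produces a filtration of $\mathbf{M}_\lambda\otimes\mathbf{M}_\mu$ whose successive quotients are the standard objects $\mathbf{\Gamma}_t(L_\tau)\cong\mathbf{M}_\tau$, each occurring exactly $\bar g^{\tau}_{\lambda,\mu}$ times. Hence $\mathbf{M}_\lambda\otimes\mathbf{M}_\mu$ is standardly filtered, and since in a highest weight category the multiplicity of $\mathbf{M}_\tau$ in a standard filtration of a $\Delta$-filtered object is independent of the filtration (it is the coefficient of $[\mathbf{M}_\tau]$ when the class is expanded in the basis $\{[\mathbf{M}_\nu]\}_\nu$ of the Grothendieck group, equivalently $\dim\Hom(-,\nabla_\tau)$), this multiplicity equals $\bar g^{\tau}_{\lambda,\mu}$ as claimed.

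The non-formal inputs are all already in hand: exactness of $\mathbf{\Gamma}_t$ and the identification $\mathbf{\Gamma}_t(L_\nu)\cong\mathbf{M}_\nu$ (Theorem \ref{thrm:main}), and the finite length of $L_\lambda\otimes L_\mu$ in $\Rep(S_\infty)$ with its Kronecker interpretation. The point requiring care — and the only real step beyond bookkeeping — is that one may not simply argue ``$X\otimes Y$ lies in the essential image of $\mathbf{\Gamma}_t$, hence is standardly filtered'': by Remark \ref{rem:standard} the essential image does not exhaust the standardly-filtered objects, so the reduction to $\mathbf{M}_\lambda\otimes\mathbf{M}_\mu$ via filtrations and the extension-closure property of $\Delta$-filtered objects is genuinely needed rather than cosmetic. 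I do not anticipate any further obstacle.
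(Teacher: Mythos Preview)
Your proposal is correct and follows essentially the same route as the paper: use that $\mathbf{\Gamma}_t$ is exact and monoidal with $\mathbf{\Gamma}_t(L_\nu)\cong\mathbf{M}_\nu$, apply it to a composition series of $L_\lambda\otimes L_\mu$, and read off the standard filtration of $\mathbf{M}_\lambda\otimes\mathbf{M}_\mu$ with the Kronecker multiplicities. The paper's own proof is a single sentence that only addresses the multiplicity claim explicitly; your write-up is more complete in that it spells out the reduction of the closure statement (arbitrary standardly-filtered $X,Y$) to the case $\mathbf{M}_\lambda\otimes\mathbf{M}_\mu$ via biexactness of $\otimes$ and extension-closure of $\Delta$-filtered objects, a step the paper leaves implicit.
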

\begin{proof}
 By Theorem \ref{thrm:main}, this multiplicity equals the multiplicity of $L_{\tau}$ in $L_{\lambda} \otimes L_{\mu}$ (object of $\Rep (S_{\infty})$), which is known to be $\bar{g}^{\tau}_{\lambda, \mu}$.
\end{proof}

We now compute the structure constants in the Grothendieck ring of $\uRep^{ab}(S_t)$.

Fix $t \in \C$. We will use the following notation to shorten our formulas: given a partition $\lambda$ and $ k\in \Z$, we denote by $\mathbf L_{\lambda^{\dagger k}}$ (resp. $\mathbf M_{\lambda^{\dagger k}}$) the following object in $\uRep^{ab}(S_t)$:

\begin{itemize}
 \item If $\mathbf X_\lambda$ belongs in semisimple block of $\uRep^{ab}(S_t)$, then $$\mathbf M_{\lambda^{\dagger k}} = \mathbf L_{\lambda^{\dagger k}} = \mathbf L_{\lambda} = \mathbf X_{\lambda}$$ for $k=0$, and $\mathbf M_{\lambda^{\dagger k}} = \mathbf L_{\lambda^{\dagger k}} =0$ if $k \neq 0$.
 \item Otherwise, let $\lambda = \lambda'^{(i)}$ for some $i \geq 0$ and $\lambda'$ such that $\lambda'[t]$ is a partition (see \eqref{eq:block_Deligne}). Then $$\mathbf M_{\lambda^{\dagger k}} =\mathbf M_{\lambda'^{(i+k)}}, \;\; \mathbf L_{\lambda^{\dagger k}} = \mathbf L_{\lambda'^{(i+k)}}$$ where we put $\mathbf{M}_{\lambda^{(j)}} = \mathbf{L}_{\lambda^{(j)}} = 0$ for $j <0$. 
\end{itemize}

In particular, $\mathbf L_{\lambda^{\dagger 0}} = \mathbf L_{\lambda}$ for any $\lambda$ and $t$.

\begin{proposition}
 Given three partitions $\lambda, \mu, \tau$, consider the composition series of $ \mathbf  L_\lambda \otimes \mathbf  L_\mu$ in $\uRep^{ab}(S_t)$. The multiplicity of $\mathbf L_{\tau}$ in this series is then 
 $$\sum_{j, k \geq 0} (-1)^{j+k} \left( \bar{g}^{\tau}_{\lambda^{\dagger (-j)} , \mu^{\dagger (-k)}} + \bar{g}^{\tau^{\dagger 1}}_{\lambda^{\dagger (-j)} , \mu^{\dagger (-k)}} \right)$$
\end{proposition}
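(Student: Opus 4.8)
The plan is to work in the Grothendieck ring $K_0:=K_0(\uRep^{ab}(S_t))$, which is an honest commutative ring because $\otimes$ is biexact, and which has both $\{[\mathbf L_\lambda]\}_\lambda$ and $\{[\mathbf M_\lambda]\}_\lambda$ as $\Z$-bases (every object of $\uRep^{ab}(S_t)$ has finite length). All the sums occurring below will be finite: inside a fixed non-semisimple block only finitely many of the shifted diagrams $\lambda^{\dagger(-j)}$ are non-zero, and in a semisimple block only the $j=0$ term survives. I will use throughout the conventions implicit in the statement, namely that $\bar{g}^{\gamma}_{\alpha,\beta}$ is read as $0$ whenever $\alpha$ or $\beta$ is $0$, and that the symbol $\mathbf M_{\tau^{\dagger 1}}$ (and hence the second summand in the claimed formula) is vacuous when $\mathbf L_\tau$ lies in a semisimple block.

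The first step is the change-of-basis identity in $K_0$
\[ [\mathbf M_\gamma] \;=\; [\mathbf L_\gamma] + [\mathbf L_{\gamma^{\dagger(-1)}}], \qquad\text{equivalently}\qquad [\mathbf L_\lambda] \;=\; \sum_{j\ge 0}(-1)^{j}\,[\mathbf M_{\lambda^{\dagger(-j)}}]. \]
When $\X_\gamma$ sits in a semisimple block both sides of the first equality equal $[\X_\gamma]=[\mathbf L_\gamma]=[\mathbf M_\gamma]$. When $\gamma=\gamma'^{(i)}$ lies in a non-semisimple block, the first equality says that the standard object $\mathbf M_{\gamma'^{(i)}}$ has exactly the two composition factors $\mathbf L_{\gamma'^{(i)}}$ (its head) and $\mathbf L_{\gamma'^{(i-1)}}$; this is part of the description of the blocks of $\uRep^{ab}(S_t)$ in \cite{CO1}, and can also be recovered inside the present paper from BGG reciprocity for the highest weight category (\cite{CPS}) together with the identification $\mathbf P_{\gamma'^{(i-1)}}=\X_{\gamma'^{(i)}}$ and the computation of the standard filtration of $\X_{\gamma'^{(i)}}$ carried out in the proof of Lemma \ref{lem:standard-components}. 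The second, inverted form then follows by a telescoping sum inside each block. In particular, the coefficient of $[\mathbf L_\tau]$ in $[\mathbf M_\gamma]$ is $1$ if $\gamma\in\{\tau,\tau^{\dagger 1}\}$ and $0$ otherwise.

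Now I expand the tensor product. Bilinearity of the product in $K_0$ together with the first step gives
\[ [\mathbf L_\lambda]\cdot[\mathbf L_\mu] \;=\; \sum_{j,k\ge 0}(-1)^{j+k}\,[\mathbf M_{\lambda^{\dagger(-j)}}]\cdot[\mathbf M_{\mu^{\dagger(-k)}}]. \]
By the Corollary preceding this proposition, a tensor product of standardly filtered objects is standardly filtered and $[\mathbf M_\alpha]\cdot[\mathbf M_\beta]=\sum_{\gamma}\bar{g}^{\gamma}_{\alpha,\beta}[\mathbf M_\gamma]$; substituting this identity yields
\[ [\mathbf L_\lambda]\cdot[\mathbf L_\mu] \;=\; \sum_{j,k\ge 0}(-1)^{j+k}\sum_{\gamma}\bar{g}^{\gamma}_{\lambda^{\dagger(-j)},\,\mu^{\dagger(-k)}}\,[\mathbf M_\gamma]. \]
Extracting the coefficient of $[\mathbf L_\tau]$ on both sides, and using the last sentence of the first step (the only $\gamma$ contributing are $\gamma=\tau$ and $\gamma=\tau^{\dagger 1}$, each with coefficient $1$), one obtains that the multiplicity of $\mathbf L_\tau$ in the composition series of $\mathbf L_\lambda\otimes\mathbf L_\mu$ equals
\[ \sum_{j,k\ge 0}(-1)^{j+k}\Big(\bar{g}^{\tau}_{\lambda^{\dagger(-j)},\,\mu^{\dagger(-k)}} + \bar{g}^{\tau^{\dagger 1}}_{\lambda^{\dagger(-j)},\,\mu^{\dagger(-k)}}\Big), \]
as claimed. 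There is no serious difficulty in the argument — all the substance sits in Theorem \ref{thrm:main} and its Corollary — the one point demanding care being the first step: pinning down the direction of the $\dagger$-shift (equivalently, which two simple objects occur in a standard object of $\uRep^{ab}(S_t)$), and keeping the zero-object conventions consistent so that the telescoping and the final coefficient extraction are valid.
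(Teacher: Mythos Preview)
Your proof is correct and follows essentially the same route as the paper's: invert the relation $[\mathbf M_\gamma]=[\mathbf L_\gamma]+[\mathbf L_{\gamma^{\dagger(-1)}}]$ (cited there from \cite[Section 4.4]{En}), expand $[\mathbf L_\lambda][\mathbf L_\mu]$ in standards, apply the preceding Corollary, and read off the coefficient of $[\mathbf L_\tau]$. If anything, your write-up is slightly more careful about the direction of the $\dagger$-shift and the zero-object conventions than the paper's version.
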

\begin{proof}
Using the above notation, we can rewrite the result of \cite[Section 4.4]{En} as follows: for any $t$ and any partition $\lambda$, we have the following equality in the Grothendieck ring of $\uRep^{ab}(S_t)$:
$$[\mathbf M_{\lambda}] = [\mathbf L_{\lambda^{\dagger 0}}] + [\mathbf L_{\lambda^{\dagger (-1)}}]$$
Hence
$$[\mathbf L_{\lambda}] = \sum_{j \geq 0} (-1)^j [\mathbf M_{\lambda^{(\dagger (-j)}}]$$
and thus 
\begin{align*}
&[\mathbf L_{\lambda} \otimes \mathbf L_{\mu}] = [\mathbf L_{\lambda}][\mathbf L_{\mu}]  = \left(\sum_{j \geq 0} (-1)^j [\mathbf M_{\lambda^{\dagger (-j)}}] \right) \left(\sum_{k \geq 0} (-1)^k [\mathbf M_{\mu^{\dagger (-k)}}] \right) = \\
& = \sum_{j, k \geq 0} (-1)^{j+k} [\mathbf M_{\lambda^{\dagger (-j)}}] [\mathbf M_{\mu^{\dagger (-k)}}] = \sum_{j, k \geq 0} (-1)^{j+k} \sum_{\tau} \bar{g}^{\tau}_{\lambda^{\dagger (-j)}, \mu^{\dagger (-k)}}[\mathbf M_{\tau}] = \\
& = \sum_{j, k \geq 0} (-1)^{j+k} [\mathbf M_{\lambda^{\dagger (-j)}}] [\mathbf M_{\mu^{\dagger (-k)}}] = \sum_{\tau} \sum_{j, k \geq 0} (-1)^{j+k} \bar{g}^{\tau}_{\lambda^{\dagger (-j)}, \mu^{\dagger (-k)}}\left( [\mathbf L_{\tau}] + [\mathbf L_{\tau^{\dagger 1}}] \right)
\end{align*}

Hence the multiplicity of $[\mathbf L_{\tau}]$ in $[\mathbf L_{\lambda} \otimes \mathbf L_{\mu}]$ is 
$$\sum_{j, k \geq 0} (-1)^{j+k} \bar{g}^{\tau}_{\lambda^{\dagger (-j)}, \mu^{\dagger (-k)}} + \bar{g}^{\tau^{\dagger 1}}_{\lambda^{\dagger (-j)}, \mu^{\dagger (-k)}}$$
as required.
\end{proof}

\end{document}